\theoremstyle{plain}
\newtheorem{theorem}{Theorem}[section]
\newtheorem{corollary}[theorem]{Corollary}
\newtheorem{lemma}[theorem]{Lemma}
\newtheorem{proposition}[theorem]{Proposition}
\theoremstyle{definition}
\newtheorem*{openproblem}{Open Problem}
\newtheorem{example}[theorem]{Example}
\newtheorem{remark}[theorem]{Remark}
\numberwithin{equation}{section}
\newcommand{\R}{{\mathbb R}}
\newcommand{\N}{{\mathbb N}}
\providecommand{\vint}[1]{\mathchoice
          {\mathop{\vrule width 5pt height 3 pt depth -2.5pt
                  \kern -9pt \kern 1pt\intop}\nolimits_{\kern -5pt{#1}}}
          {\mathop{\vrule width 5pt height 3 pt depth -2.6pt
                  \kern -6pt \intop}\nolimits_{\kern -3pt{#1}}}
          {\mathop{\vrule width 5pt height 3 pt depth -2.6pt
                  \kern -6pt \intop}\nolimits_{\kern -3pt{#1}}}
          {\mathop{\vrule width 5pt height 3 pt depth -2.6pt
                  \kern -6pt \intop}\nolimits_{\kern -3pt{#1}}}}
\newcommand{\eps}{\varepsilon}
\newcommand{\loc}{{\mbox{\scriptsize{loc}}}}
\newcommand{\cH}{\mathcal{H}}
\newcommand{\BV}{\mathrm{BV}}
\newcommand{\liploc}{\mathrm{Lip}_{\mathrm{loc}}}
\DeclareMathOperator{\Mod}{Mod}
\DeclareMathOperator{\capa}{Cap}
\DeclareMathOperator{\dist}{dist}
\DeclareMathOperator{\Lip}{Lip}
\DeclareMathOperator{\supp}{supp}
\def\XXint#1#2#3{{\setbox0=\hbox{$#1{#2#3}{\int}$}
\vcenter{\hbox{$#2#3$}}\kern-.5\wd0}}
\begin{document}
\title{Fine properties and a notion of quasicontinuity for $\BV$ functions on metric spaces
\footnote{{\bf 2010 Mathematics Subject Classification}: 30L99, 26B30, 43A85.
\hfill \break {\it Keywords\,}: bounded variation, metric measure space, Poincar\'e inequality, quasicontinuity, jump set, capacity.
}}
\author{Panu Lahti \\
Mathematical Institute, University of Oxford,\\ Andrew Wiles Building,\\
Radcliffe Observatory Quarter, Woodstock Road,\\
Oxford, OX2 6GG.\\
E-mail: {\tt lahti@maths.ox.ac.uk}\\
\\
Nageswari Shanmugalingam \\
Department of Mathematical Sciences,\\
P.O. Box 210025, University of
Cincinnati,\\
Cincinnati, OH 45221--0025, U.S.A.\\
\noindent E-mail: {\tt shanmun@uc.edu} }
\maketitle
\vspace{-0.5cm}
\begin{abstract} 
On a metric space equipped with a doubling measure supporting a Poincar\'e inequality, we 
show that given a BV function, discarding a set of small $1$-capacity makes the 
function continuous outside its jump set and ``one-sidedly" continuous in its jump set.
We show that such a property implies, in particular, that the measure theoretic boundary of a set of finite perimeter separates the measure theoretic interior of the set from its measure theoretic exterior, both in the sense of the subspace topology outside sets of small $1$-capacity, and in the sense of $1$-almost every curve.
\end{abstract}

\section{Introduction}

Sobolev functions in Euclidean spaces are known to be quasicontinuous. This result holds also in the metric 
setting: if the measure on the metric space is doubling and supports a $(1,1)$-Poincar\'e inequality, then for every
Newton-Sobolev function $u\in N^{1,1}(X)$ there exists an open set $G\subset X$ of small $1$-capacity 
such that the restriction $u|_{X\setminus G}$ is continuous, see for example~\cite{BBS1}. For $p>1$ one can 
even remove the requirement
that the metric space support a $(1,p)$-Poincar\'e inequality.
This follows from the fact that Lipschitz functions are dense in $N^{1,p}(X)$,
see~\cite{ACdM}, together with the fact that
density of Lipschitz functions implies quasicontinuity of $N^{1,p}$-functions.

Such a quasicontinuity property fails for functions of bounded variation, or $\BV$ functions.
From~\cite[Theorem 4.3, Theorem 5.1]{HaKi} (see also~\cite{HS, KinKST10, KKST3})
we know that a set has small $1$-capacity if and only if its codimension $1$ Hausdorff content $\mathcal{H}_R$, for 
any fixed $R>0$, is small. However, $\BV$ 
functions can have jump sets with $\mathcal{H}_R$-measure bounded away from $0$, 
and it is not possible to enclose such
sets within sets of small $1$-capacity.

It is known that a $\BV$ function coincides with a Lipschitz function outside sets of small \emph{measure}, see 
e.g.~\cite[p. 252]{EvaG92} and~\cite[Proposition 4.3]{KKST2}. For spaces $\BV_k(\R^n)$ of higher order $\BV$ functions, 
with $k\in\N$, Lusin-type approximations by means of differentiable functions outside sets of small 
$1$-capacity are given in~\cite[Theorem 6.2]{BKK}. However, even in the Euclidean setting, little appears to be known about 
the behavior of (first-order) $\BV$ functions outside sets of small $1$-capacity.
The goal of the current paper is to show a weak notion of quasicontinuity for $\BV$ functions, involving continuity 
outside the jump set and ``one-sided" continuity up to the jump set.

In what follows, $X$ is a metric space equipped with a metric $d$ and a doubling Borel regular outer measure $\mu$ that supports  $(1,1)$-Poincar\'e inequality.
Definitions and notation will be discussed systematically in Section~\ref{sec:preliminaries}.
The jump set of a function $u\in\BV(X)$ is defined as
\[
S_{u}:=\{x\in X:\, u^{\wedge}(x)<u^{\vee}(x)\},
\]
where $u^{\wedge}(x)$ and $u^{\vee}(x)$ are the lower and upper approximate limits of $u$ defined as
\begin{equation*}
u^{\wedge}(x):
=\sup\left\{t\in\overline\R:\,\lim_{r\to 0^+}\frac{\mu(B(x,r)\cap\{u<t\})}{\mu(B(x,r))}=0\right\}
\end{equation*}
and
\begin{equation*}
u^{\vee}(x):
=\inf\left\{t\in\overline\R:\,\lim_{r\to 0^+}\frac{\mu(B(x,r)\cap\{u>t\})}{\mu(B(x,r))}=0\right\}.
\end{equation*}

It was shown in~\cite[Theorem 5.3]{AMP} that
$\mathcal{H}$ is a $\sigma$-finite measure on $S_u$.
Furthermore, from~\cite[Theorem 5.4]{A1} we know that there is a
number $0<\gamma\le 1/2$ such that if $E\subset X$ is a set of finite perimeter (that is, $\chi_E\in\BV(X)$), then
 the perimeter measure $P(E,\cdot)$ is carried on the set $\Sigma_\gamma E$, which is the collection of points
$x\in X$ for which
\[
 \gamma\le \liminf_{r\to 0^+}\frac{\mu(B(x,r)\cap E)}{\mu(B(x,r))}
   \le \limsup_{r\to 0^+}\frac{\mu(B(x,r)\cap E)}{\mu(B(x,r))}\le 1-\gamma.
\]
Classical results on $\BV$ functions in the Euclidean setting can be formulated in terms of the approximate limits $u^{\wedge}$ and $u^{\vee}$, but in the general metric setting we need to consider a larger number of jump values. The reason for this will be illustrated in Example~\ref{ex:one dimensional space}.
Given $u\in\BV(X)$, we define the functions (jump values) $u^l$, $l=1,\ldots, n:=\lfloor 1/\gamma\rfloor$ (with $\gamma$ 
as above), by
$u^1:=u^{\wedge}$, $u^n:=u^{\vee}$, 
and for $l=2,3,\ldots, n-1$, we set
\[
u^{l}(x):=\sup\left\{t\in\overline{\R}:\,\lim_{r\to 0^+}\frac{\mu(B(x,r)\cap \{u^{l-1}(x)+\delta<u<t\})}{\mu(B(x,r))}=0\ \ \forall\, \delta>0\right\}
\]
provided $u^{l-1}(x)<u^\vee(x)$, and otherwise, we set 
$u^l(x)=u^{\vee}(x)$. We have $u^{\wedge}=u^1\le \ldots \le  u^n=u^{\vee}$.
We also define $\widetilde{u}:=(u^{\wedge}+u^{\vee})/2$.
Note that if $x\in X\setminus S_u$, then $u^1(x)=\ldots=u^n(x)$.

The following theorem, which is the main result of this paper, introduces a notion of quasicontinuity for $\BV$ functions.

\begin{theorem}\label{thm:main result}
Let $u\in\BV(X)$ and let $\eps>0$. Then there exists an open set $G\subset X$ with $\capa_1(G)<\eps$ such that if $y_k\to x$ 
with $y_k,x\in X\setminus G$, then 
\[
\min_{l_2\in \{1,\ldots,n\}} |u^{l_1}(y_k)-u^{l_2}(x)|\to 0
\]
for each $l_1=1,\ldots,n$.
\end{theorem}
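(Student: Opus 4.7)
The plan is to reduce the multivalued quasicontinuity of $u$ to a quasi-openness property for the measure-theoretic interiors of countably many super-level sets of $u$, and then to decode the jump values $u^l$ from the resulting density profiles. By the coarea formula $\|Du\|(X) = \int_\R P(\{u > t\}, X)\,dt$, the set of levels $t$ with $P(\{u > t\}, X) < \infty$ has full Lebesgue measure; I would fix a countable dense subset $T = \{t_i\}_{i \in \N}$ of such good levels and set $E_i := \{u > t_i\}$.

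The core technical step is to find, for each $i$, an open set $G_i$ with $\capa_1(G_i) < \eps / 2^i$ such that both the measure-theoretic interior $I_i := \{x : \Theta(E_i, x) = 1\}$ and exterior $O_i := \{x : \Theta(E_i, x) = 0\}$ are relatively open in $X \setminus G_i$; equivalently, $\Sigma_\gamma E_i$ is relatively closed in $X \setminus G_i$. Although $\Sigma_\gamma E_i$ itself has only finite (not necessarily small) codimension-$1$ Hausdorff content and so cannot in general be enclosed in a small-capacity set, the subset of $I_i \cup O_i$ that accumulates on $\Sigma_\gamma E_i$ from the wrong side should be small in capacity; this is where the Poincar\'e inequality, together with the estimates of~\cite{HaKi, KKST3} relating Hausdorff content to capacity, would be applied. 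Taking $G := \bigcup_i G_i$ then yields an open set of $1$-capacity less than $\eps$.

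The second ingredient is a combinatorial lemma expressing each $u^l(x)$ in terms of the density profile $(\Theta(E_i, x))_i$: informally, the values $u^1(x) \le \cdots \le u^n(x)$ correspond to the asymptotic density layers of the $E_i$ at $x$, with at most $n = \lfloor 1/\gamma \rfloor$ layers of density at least $\gamma$ permitted at any single point. Given $y_k \to x$ in $X \setminus G$ and $l_1 \in \{1, \ldots, n\}$, pass to a subsequence so that $u^{l_1}(y_k) \to s$. For each $t_i \in T$, the quasi-openness of the previous step implies: if $x \in I_i$ (resp.\ $O_i$) then $y_k \in I_i$ (resp.\ $O_i$) for large $k$, while if $x \in \Sigma_\gamma E_i$ then $y_k$ may land in any of the three density classes. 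Tracking which $t_i$ lie just below versus just above $u^{l_1}(y_k)$ as $k \to \infty$, the density profile at $y_k$ converges to a coarsening of the profile at $x$, which forces $s$ to equal one of the values $u^{l_2}(x)$.

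The main obstacle is the quasi-openness step: one must remove a small-capacity open set so that $\Sigma_\gamma E_i$ becomes relatively closed, \emph{without} attempting to enclose $\Sigma_\gamma E_i$ itself in small capacity. A secondary difficulty is the combinatorial correspondence, where the recursive $\delta$-quantified definition of $u^l(x)$ must be matched with density statements about finitely many $E_{t_i}$; this is handled by diagonalizing over countably many $t_i \in T$ and $\delta > 0$, using the density of $T$ in $\R$ and the finite bound $n$ on the number of density layers.
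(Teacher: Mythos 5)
Your plan reduces the quasicontinuity statement to a quasi-openness property of the measure-theoretic interiors $\mathcal{I}(E_i)$ and exteriors $\mathcal{E}(E_i)$ of a countable family of super-level sets, plus a combinatorial lemma that decodes the values $u^l(x)$ from density profiles. The reduction itself is an appealing alternative framing, but the proposal has a genuine gap at precisely the step you yourself flag as ``the main obstacle.''

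\paragraph{The core technical step is not proved, and is in fact equivalent in difficulty to the theorem.}
You claim that for a set $E_i$ of finite perimeter there is an open $G_i$ with $\capa_1(G_i)<\eps/2^i$ such that $\mathcal{I}(E_i)$ and $\mathcal{E}(E_i)$ are relatively open in $X\setminus G_i$, or equivalently that $\Sigma_\gamma E_i$ is relatively closed. You correctly note that one cannot simply enclose $\Sigma_\gamma E_i$ in a small-capacity set, but the justification you then give --- that the ``wrong-sided accumulation'' set ``should be small in capacity'' via the Poincar\'e inequality and Hausdorff-content--capacity comparisons --- is only a heuristic, not an argument. In fact this quasi-openness property is \emph{exactly} what the paper establishes in Proposition~\ref{prop:topological and measure theoretic boundary}, and there it is derived \emph{as a consequence of} Theorem~\ref{thm:main result} applied to $\chi_E$. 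Since the theorem for $u=\chi_E$ is a special (and not an easier) case of the theorem itself, relying on this property as a building block is circular unless you give an independent proof. An independent proof would require roughly the same machinery the paper develops --- mollification by discrete convolutions in open neighborhoods of the jump set, the capacitary weak-type estimate for the restricted maximal operator (Lemma~\ref{lem:capacity level set maximal function}), and uniform convergence of upper/lower approximate limits outside small-capacity sets (Lemma~\ref{lem:from BV to uniform convergence}) --- none of which appears in the proposal.

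\paragraph{The combinatorial decoding is also nontrivial and is not reduced to known results.}
Even granting the quasi-openness, translating the density profile $(\Theta(E_i,x))_i$ into the jump values $u^1(x)\le\cdots\le u^{n(x)}(x)$ and proving that the profile of $y_k$ ``coarsens'' to that of $x$ in a way that forces $u^{l_1}(y_k)$ to some $u^{l_2}(x)$ requires the fine structure established in Theorem~\ref{thm:behavior in jump set}, which is itself a substantial result resting on Ambrosio's density dichotomy~\eqref{eq:density of E} and the coarea formula, and requires discarding an $\mathcal H$-null set. That $\mathcal H$-null exceptional set must be absorbed into $G$; this is unproblematic, but it means your combinatorial lemma cannot be purely formal --- it must be stated ``for $\mathcal H$-a.e.\ $x\in S_u$'' and its proof is roughly the content of Section~5 of the paper. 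One additional subtlety your sketch does not address: when $x\in S_u$, for levels $t_i$ with $x\in\Sigma_\gamma E_i$ the point $y_k$ can itself land in $\Sigma_\gamma E_i$ (i.e.\ $y_k\in S_u$), so the profile of $y_k$ is not simply a coarsening in the way described, and one must argue separately in each ``sector'' $A^\delta_{l_2}(x)$ as in Proposition~\ref{prop:one-sided continuity}.

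\paragraph{Summary.}
The coarea reduction and density-profile bookkeeping are a genuinely different (and potentially illuminating) organization of the argument, but the proposal leaves unproved the central quasi-openness claim, which is where all the analytic work lies and which in the paper is a corollary of the theorem rather than an input to it. As written, the argument is circular; to fix it one would essentially have to reconstruct the mollification and uniform-convergence apparatus of Sections~3--5.
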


In particular, $\widetilde{u}\vert_{X\setminus G}$ is continuous at every $x\in X\setminus (S_u\cup G)$.
The proof of Theorem~\ref{thm:main result} is given in two parts; in
Proposition~\ref{prop:quasicontinuity for BV} we prove continuity outside the jump set, and in Proposition~\ref{prop:one-sided continuity} we prove ``one-sided" continuity up to the jump set. In proving the ``one-sided" continuity, we show that if $x\in S_u\setminus G$, then $X$ can be partitioned into
at most $n^2$ number of sets $(u^{l_1})^{-1}(A^\delta_{l_2}(x))$, defined in \eqref{eq:definition of Als}, such that when the sequence $y_k$ lies
in $(u^{l_1})^{-1}(A^\delta_{l_2}(x))\setminus G$ and converges to $x$, we must have $u^{l_1}(y_k)\to u^{l_2}(x)$.\\

\noindent {\bf Acknowledgment:} Part of the research for this paper was conducted during the visit of the first author to the
University of Cincinnati
in 2015, and he wishes to thank the institution for its kind hospitality. The research of the first author
is partially funded by a grant from the Finnish Cultural Foundation. The research of the second author is partially funded
by the National Science Foundation (U.S.A.) grant~\#DMS-1500440.

\section{Background}\label{sec:preliminaries}

In this section we introduce the necessary definitions and assumptions.

Throughout the paper, $(X,d,\mu)$ is a complete metric space equipped
with a Borel regular outer measure $\mu$ satisfying a doubling property, that is,
there is a constant $C_d\ge 1$ such that
\[
0<\mu(B(x,2r))\leq C_d\,\mu(B(x,r))<\infty
\]
for every ball $B=B(x,r)$ with center $x\in X$ and radius $r>0$. Given a ball $B=B(x,r)$ and $\tau>0$, we denote by
$\tau B$  the ball $B(x,\tau r)$. 
In a metric space, a ball does not necessarily have a unique center 
and radius, but whenever we use the above abbreviation we will consider balls whose center and radii have been pre-specified,
and so no ambiguity arises. 

By iterating the doubling condition, we obtain that there are constants $C\ge 1$ and $Q>0$ such that 
\begin{equation}\label{eq:definition of Q}
\frac{\mu(B(y,r))}{\mu(B(x,R))}\ge C^{-1} \left(\frac{r}{R}\right)^Q
\end{equation}
for every $0<r\le R$ and $y\in B(x,R)$. The choice $Q=\log_2(C_d)$ works, but a smaller value of $Q$ might
satisfy the above condition as well.

In general, $C\ge 1$ will denote a generic constant whose particular value is not important for the purposes of this
paper, and might differ between
each occurrence. When we want to specify that a constant $C$
depends on the parameters $a,b, \ldots,$ we write $C=C(a,b,\ldots)$. Unless otherwise specified, all constants only 
depend on the doubling constant $C_d$ and the constants $C_P,\lambda$ associated
with the Poincar\'e inequality defined below.

Given $x\in X$ and $A_1,A_2\subset X$, we set
\[
\dist(x,A_1):=\inf\{d(x,y) :\, y\in A_1\}, \hskip .2cm \dist(A_1,A_2):=\inf\{d(z,A_1) :\, z\in A_2\}.
\]

A complete metric space with a doubling measure is proper,
that is, closed and bounded sets are compact. Since $X$ is proper, for any open set $\Omega\subset X$
we define $\liploc(\Omega)$ to be the space of
functions that are Lipschitz in every $\Omega'\Subset\Omega$.
Here $\Omega'\Subset\Omega$ means that $\Omega'$ is open and that $\overline{\Omega'}$ is a
compact subset of $\Omega$.
We define other local spaces similarly.

For any set $A\subset X$ and $0<R<\infty$, the restricted spherical Hausdorff content
of codimension $1$ is defined as
\[
\mathcal{H}_{R}(A):=\inf\left\{ \sum_{i=1}^{\infty}
  \frac{\mu(B(x_{i},r_{i}))}{r_{i}}:\,A\subset\bigcup_{i=1}^{\infty}B(x_{i},r_{i}),\,r_{i}\le R\right\}.
\]
We define the above also for $R=\infty$ by requiring $r_i<\infty$. 
The codimension $1$ Hausdorff measure of a set $A\subset X$ is given by
\begin{equation*}
  \mathcal{H}(A):=\lim_{R\rightarrow 0^+}\mathcal{H}_{R}(A).
\end{equation*}

The measure theoretic boundary $\partial^{*}E$ of a set $E\subset X$ is the set of all points $x\in X$
at which both $E$ and its complement have positive upper density, i.e.
\[
\limsup_{r\to 0^+}\frac{\mu(B(x,r)\cap E)}{\mu(B(x,r))}>0\quad\;
  \textrm{and}\quad\;\limsup_{r\to 0^+}\frac{\mu(B(x,r)\setminus E)}{\mu(B(x,r))}>0.
\]

A curve is a rectifiable continuous mapping from a compact interval
into $X$.
The length of a curve $\gamma$
is denoted by $\ell_{\gamma}$. We will assume every curve to be parametrized
by arc-length, which can always be done (see e.g.~\cite[Theorem~3.2]{Hj} or~\cite{AT}).
A nonnegative Borel function $g$ on $X$ is an \emph{upper gradient} 
of an extended real-valued function $u$
on $X$ if for all curves $\gamma$ on $X$, we have
\begin{equation}\label{eq:upper gradient definition}
|u(x)-u(y)|\le \int_\gamma g\,ds,
\end{equation}
where $x$ and $y$ are the end points of $\gamma$. We interpret $|u(x)-u(y)|=\infty$ whenever  
at least one of $|u(x)|$, $|u(y)|$ is infinite. Upper gradients were originally introduced in~\cite{HK}.

Let $\Gamma$ be a family of curves,
and let $1\le p<\infty$. The $p$-modulus of $\Gamma$ is defined
as
\[
\Mod_{p}(\Gamma):=\inf\int_{X}\rho^{p}\, d\mu
\]
where the infimum is taken over all nonnegative Borel functions $\rho$
such that $\int_{\gamma}\rho\,ds\ge 1$ for every $\gamma\in\Gamma$.
If a property fails only for a curve family with $p$-modulus zero,
we say that it holds for $p$-almost every (a.e.) curve.
If $g$ is a nonnegative $\mu$-measurable function on
$X$ and~\eqref{eq:upper gradient definition} holds for $p$-almost every curve, then
$g$ is a $p$\emph{-weak upper gradient} of $u$.

We consider the following norm
\[
\Vert u\Vert_{N^{1,p}(X)}:=\Vert u\Vert_{L^p(X)}+\inf_g\Vert g\Vert_{L^p(X)},
\]
with the infimum taken over all upper gradients $g$ of $u$. 
The substitute for the Sobolev space $W^{1,p}(\R^n)$ in the metric setting is the following Newton-Sobolev space 
\[
N^{1,p}(X):=\{u:\|u\|_{N^{1,p}(X)}<\infty\}/{\sim},
\]
where the equivalence relation $\sim$ is given by $u\sim v$ if and only if 
\[
\Vert u-v\Vert_{N^{1,p}(X)}=0.
\]
Similarly, we can define $N^{1,p}(\Omega)$ for any open set $\Omega\subset X$.
For more on Newton-Sobolev spaces, we refer to~\cite{S, HKST, BB}.

Next we recall the definition and basic properties of functions
of bounded variation on metric spaces, see \cite{M}. See also e.g. \cite{AFP, Giu84, Zie89} for the classical 
theory in the Euclidean setting.
For $u\in L^1_{\loc}(X)$, we define the total variation of $u$ on $X$ to be 
\[
\|Du\|(X):=\inf\Big\{\liminf_{i\to\infty}\int_X g_{u_i}\,d\mu:\, u_i\in \Lip_{\loc}(X),\, u_i\to u\textrm{ in } L^1_{\loc}(X)\Big\},
\]
where each $g_{u_i}$ is an upper gradient of $u_i$.
We say that a function $u\in L^1(X)$ is \emph{of bounded variation}, 
and denote $u\in\BV(X)$, if $\|Du\|(X)<\infty$. 
A measurable set $E\subset X$ is said to be of \emph{finite perimeter} if $\|D\chi_E\|(X)<\infty$. The \emph{perimeter}
of $E$ in $X$ is denoted by
\[
P(E,X):=\|D\chi_E\|(X).
\]
By replacing $X$ with an open set $\Omega\subset X$ in the definition of the total variation, we can define $\|Du\|(\Omega)$.
The $\BV$ norm is given by
\begin{equation*}
\Vert u\Vert_{\BV(\Omega)}:=\Vert u\Vert_{L^1(\Omega)}+\Vert Du\Vert(\Omega).
\end{equation*}
It was shown in~\cite[Theorem~3.4]{M} that for $u\in\BV(X)$, $\Vert Du\Vert$ is the restriction to the class of 
open sets of a finite Radon measure defined on the
class of all subsets of $X$. This outer measure is obtained from the map $\Omega\mapsto\Vert Du\Vert(\Omega)$ on open sets
$\Omega\subset X$ via the standard Carath\'eodory construction. Thus, 
for an arbitrary set $A\subset X$, 
\[
\|Du\|(A):=\inf\bigl\{\|Du\|(U):\, A\subset U\subset X 
\text{ with }U\text{ open}\bigr\}.
\]
Similarly, if $u\in\BV(\Omega)$, then $\|Du\|(\cdot)$ is a finite Radon measure on $\Omega$.

We have the following coarea formula from~\cite[Proposition 4.2]{M}: if $F\subset X$ is a Borel set and 
$u\in \BV(X)$, then
\begin{equation}\label{eq:coarea}
\|Du\|(F)=\int_{-\infty}^{\infty}P(\{u>t\},F)\,dt.
\end{equation}
In particular, the map $t\mapsto P(\{u>t\},F)$ is Lebesgue measurable on $\R$.

We will assume that $X$ supports a $(1,1)$-Poincar\'e inequality,
meaning that there are constants $C_P>0$ and $\lambda \ge 1$ such that for every
ball $B(x,r)$, for every locally integrable function $u$ on $X$,
and for every upper gradient $g$ of $u$, we have 
\[
\vint{B(x,r)}|u-u_{B(x,r)}|\, d\mu 
\le C_P r\vint{B(x,\lambda r)}g\,d\mu,
\]
where 
\[
u_{B(x,r)}:=\vint{B(x,r)}u\,d\mu :=\frac 1{\mu(B(x,r))}\int_{B(x,r)}u\,d\mu.
\]
By applying the Poincar\'e inequality to approximating Lipschitz functions in the definition of the total variation, 
we get the following $(1,1)$-Poincar\'e inequality for $\BV$ functions. There exists a constant $C$
such that for every ball $B(x,r)$ and every 
$u\in L^1_{\loc}(X)$, we have
\begin{equation}\label{eq:poincare for BV}
\vint{B(x,r)}|u-u_{B(x,r)}|\,d\mu
\le Cr\, \frac{\Vert Du\Vert (B(x,\lambda r))}{\mu(B(x,\lambda r))}.
\end{equation}

Sets of measure zero play a fundamental role in the theory of $L^p$ spaces. In potential theory sets of measure
zero can be too large to be discarded; a finer measure of the smallness of a set is needed. 
For $1\le p<\infty$, the $p$-capacity of a set $A\subset X$ is given by
\begin{equation}\label{eq:cap-def}
 \capa_p(A):=\inf \Vert u\Vert_{N^{1,p}(X)},
\end{equation}
where the infimum is taken over all functions $u\in N^{1,p}(X)$ such that $u\ge 1$ in a neighborhood of $A$; we can further restrict the
class of functions $u$ by requiring that $0\le u\le 1$ on $X$. It follows from~\cite[Theorem~4.3, Theorem~5.1]{HaKi} that
$\capa_1(E)=0$ if and only if $\mathcal{H}(E)=0$.  

Given a set $E\subset X$ of finite perimeter, for $\mathcal H$-a.e. $x\in \partial^*E$ we have
\begin{equation}\label{eq:density of E}
\gamma \le \liminf_{r\to 0^+} \frac{\mu(E\cap B(x,r))}{\mu(B(x,r))} \le \limsup_{r\to 0^+} \frac{\mu(E\cap B(x,r))}{\mu(B(x,r))}\le 1-\gamma
\end{equation}
where $\gamma \in (0,1/2]$ only depends on the doubling constant and the constants in the Poincar\'e inequality, 
see~\cite[Theorem 5.4]{A1}. We denote the set of all such points by $\Sigma_\gamma E$.

For a Borel set $F\subset X$ and a set $E\subset X$ of finite perimeter, we know that 
\begin{equation}\label{eq:def of theta}
\Vert D\chi_{E}\Vert(F)=\int_{\partial^{*}E\cap F}\theta_E\,d\mathcal H,
\end{equation}
where $\partial^*E$ is the measure-theoretic boundary of $E$ and
$\theta_E:X\to [\alpha,C_d]$, with $\alpha=\alpha(C_d,C_P,\lambda)>0$, see \cite[Theorem 5.3]{A1} 
and \cite[Theorem 4.6]{AMP}.

The \emph{jump set} of $u\in\BV(X)$ is the set 
\[
S_{u}:=\{x\in X:\, u^{\wedge}(x)<u^{\vee}(x)\},
\]
where $u^{\wedge}(x)$ and $u^{\vee}(x)$ are the lower and upper approximate limits of $u$ defined respectively by
\begin{equation}\label{eq:lower approximate limit}
u^{\wedge}(x):
=\sup\left\{t\in\overline\R:\,\lim_{r\to 0^+}\frac{\mu(B(x,r)\cap\{u<t\})}{\mu(B(x,r))}=0\right\}
\end{equation}
and
\begin{equation}\label{eq:upper approximate limit}
u^{\vee}(x):
=\inf\left\{t\in\overline\R:\,\lim_{r\to 0^+}\frac{\mu(B(x,r)\cap\{u>t\})}{\mu(B(x,r))}=0\right\}.
\end{equation}
We also define the functions $u^l$, $l=1,\ldots,n=\lfloor 1/\gamma\rfloor$, as 
follows: $u^1:=u^{\wedge}$, $u^n:=u^{\vee}$, 
and for $l=2,\ldots,n-1$ we define inductively
\begin{equation}\label{eq:definition of the n limits}
u^{l}(x):=\sup\left\{t\in\overline{\R}:\,\lim_{r\to 0^+}\frac{\mu(B(x,r)\cap \{u^{l-1}(x)+\delta<u<t\})}{\mu(B(x,r))}=0\ \ \forall\, \delta>0\right\}
\end{equation}
provided $u^{l-1}(x)<u^\vee(x)$, and otherwise, we set 
$u^l(x)=u^{\vee}(x)$. 
It can be shown that each $u^l$ is a Borel function,  
and $u^{\wedge}=u^1\le \ldots \le u^n = u^{\vee}$. 

Given the definition of the BV norm, we understand BV functions to be $\mu$-equivalence classes. 
To consider 
questions of continuity, we need to consider the pointwise representatives $u^l$, $l=1,\ldots,n$.
We also use the standard representative
$\widetilde{u}:=(u^{\wedge}+u^{\vee})/2$. 

By \cite[Theorem 5.3]{AMP}, the variation measure of a $\BV$ function can be decomposed into the absolutely 
continuous and singular part, and the latter into the Cantor and jump part, as follows. Given an open set 
$\Omega\subset X$ and $u\in\BV(\Omega)$, we have
\begin{equation}\label{eq:decomposition}
\begin{split}
\Vert Du\Vert(\Omega)
&=\Vert Du\Vert^a(\Omega)+\Vert Du\Vert^s(\Omega)\\
&=\Vert Du\Vert^a(\Omega)+\Vert Du\Vert^c(\Omega)+\Vert Du\Vert^j(\Omega)\\
&=\int_{\Omega}a\,d\mu+\Vert Du\Vert^c(\Omega)
   +\int_{\Omega\cap S_u}\int_{u^{\wedge}(x)}^{u^{\vee}(x)}\theta_{\{u>t\}}(x)\,dt\,d\mathcal H(x)
\end{split}
\end{equation}
where $a\in L^1(\Omega)$ is the density of the absolutely continuous part and the functions $\theta_{\{u>t\}}$ 
are as in~\eqref{eq:def of theta}.

For $R>0$, the restricted maximal function of a function $v\in L_{\loc}^1(X)$ is given by
\[
\mathcal M_R v(x):=\sup_{0<r\le R}\,\vint{B(x,r)}|v|\,d\mu,\qquad x\in X.
\]
The following result will be used a few times. See~\cite[Lemma 4.3, Remark 4.9]{KKST3} for a proof. 
While~\cite{KKST3} makes the extra assumption $\mu(X)=\infty$,
use of this assumption can be avoided by considering $\mathcal{H}_R$ instead of $\mathcal{H}_\infty$.

\begin{lemma}\label{lem:capacity level set maximal function}
There exists $C=C(C_d,C_P,\lambda,R)$ 
such that for every $u\in\BV(X)$ and $t>0$, 
\[
\capa_1(\{\mathcal{M}_R u\ge t\})\le \frac{C \Vert u\Vert_{\BV(X)}}{t}.
\]
\end{lemma}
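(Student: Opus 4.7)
The plan is to combine a $5r$-covering of $E_t := \{\mathcal{M}_R u \geq t\}$ with the $\BV$-Poincar\'e inequality~\eqref{eq:poincare for BV}, reducing the capacity estimate to a sum of the form $\sum_i \mu(B_i)/r_i$ and bounding this sum by $C\Vert u\Vert_{\BV(X)}/t$.

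First, for each $x \in E_t$ I would choose $r_x \in (0, R]$ with $\vint{B(x, r_x)} |u|\,d\mu \geq t/2$, and apply the $5r$-covering lemma to extract a countable pairwise disjoint subfamily $\{B_i = B(x_i, r_i)\}$ with $E_t \subset \bigcup_i 5B_i$. The standard Lipschitz cutoff $\phi_i(y) := \max\{0, 1 - \dist(y, 5B_i)/(5r_i)\}$ is admissible for $\capa_1(5B_i)$, equals $1$ on $5B_i$, is supported in $10B_i$, and has $1/(5r_i)\,\chi_{10B_i}$ as an upper gradient. Since $r_i \leq R$, doubling gives
\[
\capa_1(5B_i) \leq \mu(10B_i) + \frac{\mu(10B_i)}{5r_i} \leq C(C_d, R)\,\frac{\mu(B_i)}{r_i}.
\]
Countable subadditivity of $\capa_1$ then yields $\capa_1(E_t) \leq C(C_d, R)\,\sum_i \mu(B_i)/r_i$, so it suffices to prove $\sum_i \mu(B_i)/r_i \leq (C/t)\Vert u\Vert_{\BV(X)}$.

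Applying the $\BV$-Poincar\'e inequality at each $B_i$,
\[
\frac{t}{2} \leq \vint{B_i} |u|\,d\mu \leq |u_{B_i}| + Cr_i\,\frac{\Vert Du\Vert(\lambda B_i)}{\mu(\lambda B_i)},
\]
I would split the indices into two groups. If the gradient term is at least $t/4$ (case (a)), then $\mu(B_i)/r_i \leq (C/t)\Vert Du\Vert(\lambda B_i)$, and since the $B_i$ are disjoint the dilates $\lambda B_i$ have bounded overlap by doubling, so these indices contribute at most $C\Vert Du\Vert(X)/t$. Otherwise $|u_{B_i}| \geq t/4$ (case (b)), and I would iteratively enlarge the ball using the telescoping
\[
|u_{B(x_i, 2^k r_i)} - u_{B(x_i, 2^{k+1}r_i)}| \leq C\,2^k r_i\,\frac{\Vert Du\Vert(\lambda B(x_i, 2^{k+1}r_i))}{\mu(B(x_i, 2^{k+1}r_i))},
\]
stopping at the first dyadic scale $2^{k_i} r_i \leq R$ at which the mean has dropped below $t/8$. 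The accumulated gradient contributions along this iteration, divided by $t$, yield a bound on $\mu(B_i)/r_i$ controllable by the $\BV$-mass near $B(x_i, 2^{k_i}r_i)$; the geometric nature of the telescope together with bounded overlap of the enlarged balls across indices keeps the total bounded by $C\Vert u\Vert_{\BV(X)}/t$, with the $R$-dependence absorbed into the constant.

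The main obstacle is the mean-value-dominated case (b): the target quantity $\mu(B_i)/r_i$ involves the potentially very small scale $r_i$, while $|u_{B_i}|$ only provides $L^1$-average information and gives no direct handle on $1/r_i$. A careful stopping-time argument at dyadic scales, exploiting either the global $L^1$-bound on $u$ at scale $R$ or the first scale where the Poincar\'e telescope forces the mean to decay, is required to convert this mean-value information into the desired Hausdorff-content-type estimate while preserving summability under bounded overlap.
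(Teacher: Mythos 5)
The outline you give—cover $\{\mathcal M_R u\ge t\}$ by balls $B_i=B(x_i,r_i)$ with $\vint{B_i}|u|\,d\mu\ge t/2$, reduce $\capa_1$ to $\sum_i\mu(B_i)/r_i$, and split indices according to whether the Poincar\'e term or $|u_{B_i}|$ dominates—is a natural and correct start, and case (a) goes through cleanly exactly as you say. However, case (b) has a genuine gap that your telescoping proposal does not close, and it is precisely this case that forces the paper (via its reference) onto a different route.

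The core difficulty is that in case (b) the hypothesis $|u_{B_i}|\ge t/4$ is a statement about the $L^1$ average of $u$ at the very small scale $r_i$, whereas the target quantity $\mu(B_i)/r_i$ has an explicit $1/r_i$. Telescoping $|u_{B(x_i,2^k r_i)}|$ up to scale $R$ only yields the dichotomy ``$|u_{B(x_i,R')}|\ge t/8$'' or ``the cumulative Poincar\'e sum exceeds $t/(8C)$.'' The first alternative bounds $\mu(B(x_i,R'))$ by $\frac{8}{t}\int_{B(x_i,R')}|u|\,d\mu$, which, after summing over a disjoint family of \emph{balls of radius comparable to} $R$, would indeed give $C\Vert u\Vert_{L^1}/t$; but it does not bound $\mu(B_i)/r_i$, which can be arbitrarily large relative to $\mu(B(x_i,R'))/R'$ because $r_i$ is uncontrolled from below. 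The second alternative produces an estimate of the shape $\mu(B_i)/r_i\lesssim\frac1t\sum_k 2^{k(1-Q)}\Vert Du\Vert(\lambda B(x_i,2^{k+1}r_i))$, and summing this over $i$ fails because for a fixed $k$ the enlarged balls $B(x_i,2^{k+1}r_i)$ do not have bounded overlap (the original $B_i$ are disjoint, but their radii $r_i$ can differ wildly between nearby centers). A related obstruction: unlike the case $u=\chi_E$, where density in a ball drops from $\sim 1$ to $\sim 0$ over a doubling of the radius and therefore a ``stopping scale'' produces a forced jump of size $\sim 1$, the average $|u_{B(x,\rho)}|$ of a generic $\BV$ function can decay arbitrarily slowly through the threshold $t/4$, so no fixed dyadic step is guaranteed to pick up a Poincar\'e contribution of size $\sim t$.

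The paper does not supply its own proof but invokes \cite[Lemma 4.3, Remark 4.9]{KKST3}, whose engine is the \emph{boxing inequality}: one first proves the analogous weak-type estimate for characteristic functions of sets of finite perimeter, where the stopping-time argument does produce a genuine density jump (and the residual ``density stays large up to scale $R$'' case is absorbed into $\mu(E)/t$ after a $5r$-covering at scale $\sim R$, not at scale $r_i$), and then passes to general $u\in\BV(X)$ via the coarea formula~\eqref{eq:coarea}. This reduction to level sets is the missing idea: it replaces the problematic telescope over the scalar averages $|u_{B(x,\rho)}|$ by a sum over level sets, each of which is handled by the relative isoperimetric inequality with a clean jump. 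Without some version of this reduction (or another device that bypasses the $1/r_i$ loss in case (b)), the argument as proposed does not close.
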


\section{Discrete convolutions}

In this section we discuss functions in $\BV(U)$ with zero boundary values on $\partial U$, and methods 
of ``mollifying" BV functions in open sets.
For a proof of the following theorem, see \cite[Theorem~6.1]{LS} or \cite{KKST}. 

\begin{theorem}\label{thm:zero extension}
Let $U\subset X$ be an open set and $u\in \BV(U)$. Assume
that $\mathcal H(\partial U)<\infty$. If
\begin{equation}\label{eq:weak zero trace condition}
\lim_{r\to 0^+}\frac{1}{\mu(B(x,r))}\int_{B(x,r)\cap U}|u|\,d\mu=0
\end{equation}
for $\cH$-a.e. $x\in \partial U$, then
the zero extension of $u$ into the whole space $X$, denoted by $\widehat{u}$,
is in $\BV(X)$ with $\Vert D\widehat{u}\Vert(X\setminus U)=0$.
\end{theorem}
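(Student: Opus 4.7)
My plan is to pass to the level sets through the coarea formula \eqref{eq:coarea}, exploit the trace condition to eliminate boundary density, and reassemble. Writing $u=u^+-u^-$, the inequality $|u^\pm|\le|u|$ shows that both summands still satisfy \eqref{eq:weak zero trace condition}, and zero extension commutes with this decomposition, so I may assume $u\ge 0$. Then $\widehat u\in L^1(X)$, with $\{\widehat u>t\}=E_t:=\{u>t\}\subset U$ for $t>0$ and $\{\widehat u>t\}=X$ for $t<0$. Coarea on $U$ gives $\int_0^\infty P(E_t,U)\,dt=\|Du\|(U)<\infty$, so for a.e.\ $t>0$ the level set $E_t$ has finite perimeter in $U$.

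The technical heart of the argument is to upgrade each such $E_t$ to a set of finite perimeter in $X$ with $P(E_t,X)=P(E_t,U)$. The inequality $P(E_t,U)\le P(E_t,X)$ is automatic. For the reverse, I would take Lipschitz approximations $v_i$ of $\chi_{E_t}$ on $U$ with $0\le v_i\le 1$ and $\int_U g_{v_i}\,d\mu\to P(E_t,U)$, then multiply by Lipschitz cutoffs $\psi_i$ supported in a thin neighborhood of $\partial U$ so that $v_i\psi_i$ extends by zero to a Lipschitz function on $X$. The cutoffs $\psi_i$ are to be built from a Vitali-type covering of $\partial U$ by balls on which \eqref{eq:weak zero trace condition} makes the $L^1$-mass of $u$ (and hence of $\chi_{E_t}$) quantitatively small; the hypothesis $\cH(\partial U)<\infty$ keeps the covering economical, and the $(1,1)$-Poincar\'e inequality lets me bound $\int_X g_{v_i\psi_i}\,d\mu$ by $\int_U g_{v_i}\,d\mu$ up to an $o(1)$-error. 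Passing to the limit yields $P(E_t,X)\le P(E_t,U)$. To see that the limiting measure carries no mass on $\partial U$, I would invoke the observation that \eqref{eq:weak zero trace condition} forces density zero of $E_t$ at $\cH$-a.e.\ $x\in\partial U$ via Chebyshev,
\[
\frac{\mu(B(x,r)\cap E_t)}{\mu(B(x,r))}\le\frac{1}{t\,\mu(B(x,r))}\int_{B(x,r)\cap U}|u|\,d\mu\xrightarrow[r\to 0^+]{}0,
\]
so $\Sigma_\gamma E_t\subset U$ up to an $\cH$-null set; combined with \eqref{eq:def of theta} this forces $P(E_t,X\setminus U)=0$.

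Finally I would reassemble: invoking the characterization that a function $v\in L^1(X)$ whose superlevel sets have finitely integrable perimeter is automatically in $\BV(X)$, with total variation equal to the coarea integral (a standard stair-step plus lower-semicontinuity argument), and applying it to $\widehat u$,
\[
\|D\widehat u\|(X)=\int_0^\infty P(E_t,X)\,dt=\int_0^\infty P(E_t,U)\,dt=\|Du\|(U)<\infty,
\]
so $\widehat u\in\BV(X)$; and $\|D\widehat u\|(X\setminus U)\le\int_0^\infty P(E_t,X\setminus U)\,dt=0$. The main obstacle is the cutoff construction in the second paragraph: converting the pointwise trace condition on $\partial U$ into a quantitative upper-gradient bound on $v_i\psi_i$ requires a careful covering/Poincar\'e argument, which is exactly what \cite[Theorem~6.1]{LS} is designed to provide.
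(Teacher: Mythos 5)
The paper does not prove this theorem---it cites \cite[Theorem~6.1]{LS} (and \cite{KKST} for the Newtonian analogue)---so there is no in-paper argument to compare against; I review your proposal on its own terms. The skeleton is sound: reducing to $u\ge 0$, passing via coarea to the superlevel sets $E_t$, noting by Chebyshev that $E_t$ has density zero at $\cH$-a.e.\ boundary point (so $\cH(\partial^*E_t\setminus U)=0$ and, once $E_t$ has finite perimeter in $X$, \eqref{eq:def of theta} forces $P(E_t,X\setminus U)=0$), and reconstituting $\widehat u\in\BV(X)$ via the stair-step/lower-semicontinuity converse of the coarea formula are all fine. The passage to characteristic functions is also a genuine simplification in one respect: boundedness of $\chi_{E_t}$ is exactly what controls the contribution from the Egorov exceptional set in the covering argument, which is not automatic for a general $L^1$ function $u$.

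But there is a real gap at the load-bearing step. To show $P(E_t,X)\le P(E_t,U)$ you must bound the cross term $\int_U v_i\,g_{\psi_i}\,d\mu$, and you close this gap by appealing to \cite[Theorem~6.1]{LS}---which is the very theorem being proved, so the reasoning is circular. What is actually needed is: an Egorov step making the density-zero condition uniform on $\partial U\setminus N$ for a set $N\subset\partial U$ of arbitrarily small $\cH$-content; a cover of $\partial U\setminus N$ at scales below the Egorov threshold with $\sum_j\mu(B_j)/r_j\lesssim\cH(\partial U)$; a separate small-content cover of $N$ (this is where boundedness of $\chi_{E_t}$ enters); a Lipschitz cutoff $\psi$ built from these balls with $g_\psi\lesssim\sum_j r_j^{-1}\chi_{2B_j}$; and the resulting estimate showing $\int\chi_{E_t}\,g_\psi\,d\mu$ can be made arbitrarily small, followed by a diagonal argument over the approximating sequence. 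None of this is carried out. Two smaller misstatements: the cutoff should have $1-\psi_i$, not $\psi_i$, supported in a thin neighborhood of $\partial U$ (otherwise $v_i\psi_i\to 0$ rather than to $\chi_{E_t}$); and the Poincar\'e inequality is not what controls $\int g_{v_i\psi_i}$---the product-rule bound $g_{v_i\psi_i}\le\psi_i g_{v_i}+v_i g_{\psi_i}$ is elementary, and the hard part is the covering estimate for the cross term, not Poincar\'e.
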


The following technical lemma can be proved by a simple covering argument.

\begin{lemma}[{\cite[Lemma~6.4]{LS}}]\label{lem:measure in domain}
Let $U\subset X$ be an open set, let $\nu$ be a finite Radon measure on $U$, and define
\[
  A:=\left\{x\in\partial U :\,\limsup_{r\to 0^+}r \frac{\nu(B(x,r)\cap U)}{\mu(B(x,r))} > 0\right\}.
\]
Then $\cH(A )=0$.
\end{lemma}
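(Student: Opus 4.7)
The plan is to decompose $A$ by the size of the limsup and then apply a standard $5B$-covering argument, with the key twist being that the balls used for the covering are all centered at $\partial U$, hence the relevant parts of $U$ that they touch lie close to $\partial U$. Since $\nu$ is a finite Radon measure on $U$, continuity from above will force the $\nu$-content of these thin tubes to vanish as the covering scale shrinks, which is exactly what drives the Hausdorff content to zero.

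More concretely, I would write $A=\bigcup_{k\in\N}A_k$ where
\[
A_k:=\left\{x\in\partial U :\,\limsup_{r\to 0^+} r\,\frac{\nu(B(x,r)\cap U)}{\mu(B(x,r))}>\tfrac{1}{k}\right\},
\]
so it suffices to prove $\mathcal H(A_k)=0$ for each fixed $k$. For any $R>0$ and each $x\in A_k$, by definition of $\limsup$ there is some radius $r_x\in(0,R)$ with $\mu(B(x,r_x))/r_x< k\,\nu(B(x,r_x)\cap U)$. The collection $\{B(x,r_x)\}_{x\in A_k}$ has uniformly bounded radii, so the $5B$-covering theorem (available since $\mu$ is doubling and hence the space is proper with bounded geometry) produces a countable pairwise disjoint subfamily $\{B(x_i,r_i)\}$ such that $A_k\subset\bigcup_i 5B(x_i,r_i)$, and each selected ball still satisfies $\mu(B(x_i,r_i))/r_i<k\,\nu(B(x_i,r_i)\cap U)$.

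Using this covering in the definition of the restricted Hausdorff content $\mathcal H_{5R}$, together with the doubling property (which gives $\mu(5B(x_i,r_i))\le C_d^3\mu(B(x_i,r_i))$), I would estimate
\[
\mathcal H_{5R}(A_k)\le \sum_i \frac{\mu(B(x_i,5r_i))}{5r_i}\le \frac{C_d^3}{5}\sum_i \frac{\mu(B(x_i,r_i))}{r_i}\le \frac{C_d^3 k}{5}\sum_i \nu(B(x_i,r_i)\cap U).
\]
Because the $B(x_i,r_i)$ are pairwise disjoint and each $x_i\in\partial U$ with $r_i<R$, the union $\bigcup_i B(x_i,r_i)\cap U$ is contained in the open tube $U_R:=\{y\in U:\dist(y,\partial U)<R\}$, and so $\sum_i \nu(B(x_i,r_i)\cap U)\le \nu(U_R)$.

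It remains to let $R\to 0^+$. Since every point of $U$ has positive distance to $\partial U$, we have $\bigcap_{R>0}U_R=\emptyset$, and $\nu$ is a finite Radon measure on $U$, so continuity from above yields $\nu(U_R)\to 0$. Hence $\mathcal H_{5R}(A_k)\to 0$, which forces $\mathcal H(A_k)=0$; taking the countable union over $k$ gives $\mathcal H(A)=0$. The only mildly delicate step is the $5B$-selection, which requires uniformly bounded radii (provided by the cutoff $r_x<R$); everything else is routine given doubling and finiteness of $\nu$.
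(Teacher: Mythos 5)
Your proof is correct and is precisely the ``simple covering argument'' the paper alludes to (the result is cited from~\cite[Lemma~6.4]{LS}): decompose $A$ by the size of the $\limsup$, select disjoint balls via the $5B$-covering lemma, bound the restricted Hausdorff content by $C k\, \nu(U_R)$ using disjointness and the fact that the balls are centered on $\partial U$ with radii $<R$, and let $R\to 0^+$ using that $\nu$ is a finite measure on $U$ and $\bigcap_{R>0} U_R=\emptyset$. The only cosmetic quibble is the parenthetical remark about the $5B$-lemma: what you really need is uniform boundedness of the radii (which you have) and separability of $X$ to extract a countable disjoint subfamily, the latter following from properness; doubling is not itself the mechanism behind the covering lemma.
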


In most of the paper, we will work with Whitney type coverings of open sets.
For the construction of such coverings and their properties, see e.g.~\cite[Theorem 3.1]{BBS07} (such coverings
were originally introduced in the Euclidean setting by Whitney in~\cite[Section~8, page~67]{Wh}, and subsequently
extended to more general settings in~\cite[Theorem~III.1.3]{CoWe} and~\cite[Lemma~2.9]{MaSe}).

Given any  open set $U\subset X$ and a scale $R>0$, we can choose a Whitney type covering 
$\{B_j=B(x_j,r_j)\}_{j=1}^{\infty}$ of $U$ such that
\begin{enumerate}
\item for each $j\in\N$,
\begin{equation}\label{eq:radius-dist-est}
r_j= \min\left\{\frac{\dist(x_j,X\setminus U)}{40\lambda},\,R\right\},
\end{equation}
\item for each $k\in\N$, the ball $10\lambda B_k$ meets at most $C_0=C_0(C_d,\lambda)$ 
balls $10\lambda B_j$ (that is, a bounded overlap property holds),
\item  
if $10\lambda B_j$ meets $10\lambda B_k$, then $r_j\le 2r_k$.
\end{enumerate}

Given such a covering of $U$, 
we can take a partition of unity $\{\phi_j\}_{j=1}^{\infty}$ subordinate to this cover, such that $0\le \phi_j\le 1$, each 
$\phi_j$ is a $C/r_j$-Lipschitz function, and $\supp(\phi_j)\subset 2B_j$ for each 
$j\in\N$ (see e.g. \cite[Theorem 3.4]{BBS07}). Finally, we can define a \emph{discrete convolution} $v$ of 
any $u\in L^1_{\loc}(U)$ with respect to the Whitney type covering by
\[
v:=\sum_{j=1}^{\infty}u_{B_j}\phi_j.
\]
In general, $v$ is locally Lipschitz in $U$, and hence belongs to $ L^1_{\loc}(U)$. 
If $u\in L^1(U)$, then $v\in L^1(U)$.

The goal of the next proposition is to show that the discrete convolution $v$ of $u$ has the same boundary values
as $u$, i.e. that $v-u$ has zero boundary values in the sense of Theorem~\ref{thm:zero extension}.

\begin{proposition}
\label{prop:traces for discrete convolutions}
Let $U\subset X$ be an open set, $R>0$, 
and $u\in\BV(U)$. Let $v\in\liploc(U)$ be the discrete 
convolution of $u$ with respect to a Whitney type covering $\{B_j=B(x_j,r_j)\}_{j=1}^{\infty}$ of $U$ 
at scale $R$. Then
\begin{equation}\label{eq:weak trace result for discrete convolutions}
\lim_{r\to 0^+}\frac{1}{\mu(B(x,r))}\int_{B(x,r)\cap U}|v-u|\,d\mu=0
\end{equation}
for $\mathcal H$-a.e. $x\in\partial U$.
\end{proposition}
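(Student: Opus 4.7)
\medskip

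\noindent\textbf{Proof plan.} The plan is to estimate $|v-u|$ on each Whitney ball by a Poincar\'e-type bound, sum over those Whitney balls that touch $B(x,r)$, and then apply Lemma~\ref{lem:measure in domain} to the finite Radon measure $\|Du\|$ on $U$.

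Since $\sum_j\phi_j\equiv 1$ on $U$, the pointwise estimate
\[
|v(y)-u(y)|\ =\ \Bigl|\sum_j\phi_j(y)\bigl(u_{B_j}-u(y)\bigr)\Bigr|\ \le\ \sum_{j:\,y\in 2B_j}|u_{B_j}-u(y)|
\]
holds for every $y\in U$. Integrating over $B(x,r)\cap U$, interchanging sum and integral, and applying \eqref{eq:poincare for BV} on $2B_j$ (which is contained in $U$ because $2r_j\le 40\lambda r_j\le\dist(x_j,X\setminus U)$), together with the comparison of the averages $u_{B_j}$ and $u_{2B_j}$, gives
\[
\int_{2B_j}|u_{B_j}-u|\,d\mu\ \le\ C\,r_j\,\|Du\|(2\lambda B_j).
\]
Consequently
\[
\int_{B(x,r)\cap U}|v-u|\,d\mu\ \le\ C\sum_{j:\,2B_j\cap B(x,r)\ne\emptyset} r_j\,\|Du\|(2\lambda B_j).
\]

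Next I would exploit the geometry of the Whitney cover at a boundary point $x\in\partial U$. Since $\partial U\subset X\setminus U$, every Whitney ball satisfies $\dist(x_j,x)\ge 40\lambda r_j$ whenever $r_j<R$ (which is the case for small $r$). If in addition $2B_j\cap B(x,r)\ne\emptyset$, then $d(x_j,x)\le r+2r_j$, which forces $r_j\le Cr$ with $C=C(\lambda)$, and moreover $2\lambda B_j\subset B(x,C'r)\cap U$ for some $C'=C'(\lambda)$. Combining this with the bounded overlap of the dilated cover $\{2\lambda B_j\}$ (a consequence of property~(2) of the Whitney covering), I obtain
\[
\sum_{j:\,2B_j\cap B(x,r)\ne\emptyset} r_j\,\|Du\|(2\lambda B_j)\ \le\ C\,r\,\|Du\|\bigl(B(x,C'r)\cap U\bigr).
\]

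Dividing by $\mu(B(x,r))$ and using doubling to replace it by $\mu(B(x,C'r))$ up to a constant yields
\[
\frac{1}{\mu(B(x,r))}\int_{B(x,r)\cap U}|v-u|\,d\mu\ \le\ C\,C'r\,\frac{\|Du\|\bigl(B(x,C'r)\cap U\bigr)}{\mu\bigl(B(x,C'r)\bigr)}.
\]
The finale is then immediate: $\|Du\|$ is a finite Radon measure on $U$, so Lemma~\ref{lem:measure in domain} applied with $\nu=\|Du\|$ shows that the right-hand side tends to $0$ as $r\to 0^+$ for $\mathcal H$-a.e.\ $x\in\partial U$, which is exactly \eqref{eq:weak trace result for discrete convolutions}.

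I expect the main technical point to be tracking the geometric constants carefully enough to ensure that the sum really is dominated by $\|Du\|$ on a \emph{single} ball $B(x,C'r)\cap U$ with $C'$ independent of $r$; everything else is a routine combination of the Poincar\'e inequality~\eqref{eq:poincare for BV}, the doubling condition, and the bounded overlap built into the Whitney covering.
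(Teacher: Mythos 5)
Your proposal is correct and follows essentially the same route as the paper: bound $|v-u|$ via the partition of unity, reduce to $\sum_j\int_{2B_j}|u-u_{B_j}|\,d\mu$ over Whitney balls meeting $B(x,r)$, apply the $\BV$ Poincar\'e inequality, use the Whitney geometry to pack the dilated balls into $B(x,Cr)\cap U$ with bounded overlap, and finish with Lemma~\ref{lem:measure in domain} applied to $\nu=\|Du\|$. The paper carries this out with explicit constants ($r_j\le r/(38\lambda)$, $2\lambda B_j\subset B(x,2r)\cap U$), but the argument is the same.
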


This proposition was previously given in~\cite[Proposition~6.5]{LS}, but 
we include the proof
here as well since it is simple enough and makes the exposition more self-contained.

\begin{proof}
Fix $x\in\partial U$ and $r>0$. Denote by $I_r$ the set of indices $j\in\N$ for which 
$2B_j\cap B(x,r)\neq \emptyset$. Note that from~\eqref{eq:radius-dist-est} and the fact that $\lambda\ge 1$ it follows that
\[
r_j\le \frac{\dist(2B_j, X\setminus U)}{38\lambda}\le \frac{r}{38\lambda}
\]
for every $j\in I_r$.
Because $\sum_{j\in\N}\phi_j=\chi_U$, we have (in fact, the following holds for any $x\in X\setminus U$)
\begin{equation}\label{eq:discrete convolution boundary value calculation}
\begin{split}
\int_{B(x,r)\cap U}|u-v|\,d\mu
&= \int_{B(x,r)\cap U}\Big|\sum_{j\in I_r}u\phi_j-\sum_{j\in I_r}u_{B_j}\phi_j\Big|\,d\mu\\
&\le \int_{B(x,r)\cap U}\sum_{j\in I_r}\Big|\phi_j(u-u_{B_j})\Big|\,d\mu\\
&\le \sum_{j\in I_r}\,\int_{2B_j}|u-u_{B_j}|\,d\mu\\
&\le \sum_{j\in I_r}\left(\int_{2B_j}|u-u_{2B_j}|\,d\mu+\int_{2B_j}|u_{2B_j}-u_{B_j}|\,d\mu\right)\\
&\le 2C_d \sum_{j\in I_r}\,\int_{2B_j}|u-u_{2B_j}|\,d\mu\\
&\le 4C_d C_P \sum_{j\in I_r}r_j\Vert Du\Vert (2\lambda B_j)\\
&\le C r \Vert Du\Vert (B(x,2r)\cap U).
\end{split}
\end{equation}
In the above, we used the fact that $X$ supports a $(1,1)$-Poincar\'e inequality, and in the last inequality we used the fact that $2\lambda B_j\subset U\cap B(x,2r)$ for all $j\in I_r$, as well as
 the bounded overlap of the dilated Whitney balls $2\lambda B_j$.
Thus by Lemma~\ref{lem:measure in domain}, we have
\[
\lim_{r\to 0^+}\frac{1}{\mu(B(x,r))}\int_{B(x,r)\cap U}|u-v|\,d\mu=0
\]
for $\mathcal H$-a.e. $x\in\partial U$.
\end{proof}

Let $U\subset X$ be an open set, $R>0$, and as above, 
let $v$ be the discrete convolution of a function $u\in \BV(U)$ 
with respect to a Whitney type covering $\{B_j\}_{j\in\N}$ of $U$ at scale $R$. Then $v$ has an upper gradient
\begin{equation}\label{eq:upper gradient of discrete convolution}
g=C\sum_{j=1}^{\infty}\chi_{B_j}\frac{\Vert Du\Vert(5\lambda B_j)}{\mu(B_j)}
\end{equation}
in $U$ (with $C$ depending, as usual, only on the doubling constant and the constants in the 
Poincar\'e inequality),
see e.g. the proof of~\cite[Proposition~4.1]{KKST2}. 
From  the proof of this result it also follows that in a small ball comparable to the size of a Whitney ball, say 
$B=B(x,\min\{\dist(x,X\setminus U)/20\lambda,R\})$, $v$ is Lipschitz with constant, say,
\begin{equation}\label{eq:lipschitz constant of discrete convolution}
C\frac{\Vert Du\Vert(B(x,\min\{\dist(x,X\setminus U)/4,5\lambda R\}))}{\mu(B)}.
\end{equation}
Also, if $V_\epsilon\subset U$, $\epsilon>0$, is any family of 
open subsets of $U$ and every $v_\epsilon$ is a 
discrete convolution of a function $u\in L^1(U)$ with 
respect to a Whitney type covering of $V_\epsilon$ at scale $\epsilon>0$, then
\begin{equation}\label{eq:L1 convergence for discrete convolutions}
\lim_{\epsilon\to 0^+}\Vert v_\epsilon- u\Vert_{L^1(V_\epsilon)}=0,
\end{equation}
as seen by the discussion in the proof of~\cite[Lemma 5.3]{HKT}.

It is often useful to be able to ``mollify" $\BV$ functions in small open sets where e.g. a certain part 
of the variation measure lives. 
Combining the above discussion on discrete convolutions with Theorem~\ref{thm:zero extension} and 
Proposition~\ref{prop:traces for discrete convolutions}, we obtain the following result on such mollifications.

\begin{corollary}\label{cor:gluing discrete convolutions}
Let $U\subset X$ be an open set, and let $u\in\BV(U)$. Assume  that 
$\mathcal H(\partial U)<\infty$. Let each 
$v_i\in\liploc(U)$ be the discrete convolution of $u$ with respect to a Whitney type covering of $U$ at scale $1/i$, $i\in\N$. 
Then $v_i\to u$ in $L^1(U)$, $\Vert Dv_i\Vert(U)\le C\Vert Du\Vert(U)$, and the functions
\[
h_i:=
\begin{cases}
v_i-u &\ \text{in }U,\\
0  &\ \text{in }X\setminus U
\end{cases}
\]
satisfy $h_i\in \BV(X)$ and $\Vert Dh_i\Vert(X\setminus U)=0$.
\end{corollary}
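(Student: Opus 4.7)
The three conclusions essentially assemble themselves from the material already developed in the section, so the plan is really to verify which ingredient supplies which claim.

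First, the $L^1$ convergence $v_i \to u$ in $L^1(U)$ is immediate from equation~\eqref{eq:L1 convergence for discrete convolutions}, applied with $V_\epsilon = U$ and $\epsilon = 1/i$. So this step requires no work beyond invoking the cited fact.

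Next, for the total variation bound, I would take the upper gradient $g_i$ of $v_i$ given by~\eqref{eq:upper gradient of discrete convolution}:
\[
g_i = C\sum_{j=1}^\infty \chi_{B_j^{(i)}} \frac{\Vert Du\Vert(5\lambda B_j^{(i)})}{\mu(B_j^{(i)})}.
\]
Integrating over $U$ yields
\[
\int_U g_i\, d\mu = C\sum_j \Vert Du\Vert(5\lambda B_j^{(i)}),
\]
and because the dilated Whitney balls $5\lambda B_j^{(i)}$ still have bounded overlap (with a constant depending only on $C_d$ and $\lambda$) and are contained in $U$, this is at most $C\Vert Du\Vert(U)$. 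Since $v_i\in\liploc(U)$ with this $L^1$ upper gradient, the definition of total variation immediately gives $\Vert Dv_i\Vert(U)\le C\Vert Du\Vert(U)$.

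Finally, for the extension statement, I would check the hypotheses of Theorem~\ref{thm:zero extension} applied to $h_i = v_i - u$ on $U$. The regularity $h_i\in \BV(U)$ follows because $v_i\in\BV(U)$ (by the previous step) and $u\in\BV(U)$ by hypothesis, and $\BV(U)$ is closed under differences with $\Vert D(v_i - u)\Vert(U)\le \Vert Dv_i\Vert(U) + \Vert Du\Vert(U)$. The boundary trace condition~\eqref{eq:weak zero trace condition} for $h_i$ is precisely the conclusion~\eqref{eq:weak trace result for discrete convolutions} of Proposition~\ref{prop:traces for discrete convolutions} applied to $u$ and its discrete convolution $v_i$. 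Since by assumption $\mathcal H(\partial U)<\infty$, Theorem~\ref{thm:zero extension} then applies and yields $\widehat{h_i}\in\BV(X)$ with $\Vert D\widehat{h_i}\Vert(X\setminus U)=0$, which is the claim.

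There is no genuine obstacle here; the only mild point of care is the bounded-overlap argument showing $\sum_j \Vert Du\Vert(5\lambda B_j^{(i)}) \lesssim \Vert Du\Vert(U)$, which uses the Whitney property that the dilations $10\lambda B_j^{(i)}$ (hence $5\lambda B_j^{(i)}$) stay inside $U$ thanks to~\eqref{eq:radius-dist-est}, and the bounded overlap of these dilated balls. Everything else is a direct invocation of previously established results.
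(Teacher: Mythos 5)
Your proof is correct and follows exactly the route the paper intends: the $L^1$-convergence is~\eqref{eq:L1 convergence for discrete convolutions}, the variation bound comes from integrating the upper gradient~\eqref{eq:upper gradient of discrete convolution} with the bounded overlap of the dilated Whitney balls inside $U$, and the extension claim is Theorem~\ref{thm:zero extension} combined with the trace result of Proposition~\ref{prop:traces for discrete convolutions}. The paper itself presents this corollary as a direct assembly of those ingredients without further argument, so your reconstruction matches it.
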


In the above we require
that the boundary of $U$ has finite $\mathcal{H}$-measure. 
However, for the proof of the main theorem of this paper, we need ``mollifications" on arbitrary open sets.
In the following, we extend Corollary~\ref{cor:gluing discrete convolutions} to all open sets.
Recall that $\widetilde{u}:=(u^{\wedge}+u^{\vee})/2$, where the lower and upper approximate limits 
$u^\wedge$, $u^\vee$ were defined in~\eqref{eq:lower approximate limit} and~\eqref{eq:upper approximate limit}.

\begin{theorem}\label{thm:mollifying in an open set}
Let $U\subset X$ be an open set, $u\in\BV(U)$, and $\kappa>0$. 
Then there exists a function $w\in\BV(U)$ satisfying the following:
$\widetilde{w}\in N^{1,1}(U)\cap \liploc(U)$ with an
upper gradient $g$ satisfying $\Vert g\Vert_{L^1(U)}\le C\Vert Du\Vert(U)$; 
$\Vert w-u\Vert_{L^1(U)}\le\kappa$; and the function
\[
h:=
\begin{cases}
w-u &\ \text{in }U,\\
0  &\ \text{in }X\setminus  U
\end{cases}
\]
satisfies $h\in\BV(X)$ and $\Vert Dh\Vert(X\setminus U)=0$.
\end{theorem}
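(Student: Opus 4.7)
The plan is to reduce to Corollary~\ref{cor:gluing discrete convolutions} by exhausting $U$ from inside by open sets $V_j$ with $\mathcal H(\partial V_j)<\infty$, mollifying on each via Corollary~\ref{cor:gluing discrete convolutions}, and patching the results with a Lipschitz partition of unity supported compactly in $U$, so that the zero-extended ``difference'' inherits the favorable $\BV$ behavior automatically.

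\textbf{Step 1 (Exhaustion).} I construct open $V_1\Subset V_2\Subset\cdots\subset U$ with $\bigcup_j V_j=U$ and $\mathcal H(\partial V_j)<\infty$. Apply the coarea formula \eqref{eq:coarea} (on a sequence of bounded balls exhausting $X$, if necessary) to the $1$-Lipschitz function $d(x):=\dist(x,X\setminus U)$ to find $t_j\searrow 0$ such that $P(\{d>t_j\},X)<\infty$. By \eqref{eq:def of theta}, the measure-theoretic boundary $\partial^*\{d>t_j\}$ then has finite $\mathcal H$-measure; passing to the measure-theoretic interior (using \eqref{eq:density of E} to replace $\{d>t_j\}$ by a measure-equivalent open set whose topological boundary agrees with $\partial^*$ up to an $\mathcal H$-null set) gives the desired $V_j$.

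\textbf{Step 2 (Local mollification).} For each $j$, Corollary~\ref{cor:gluing discrete convolutions} applied on $V_{j+1}$ (with sufficiently small Whitney scale) produces $w_j\in\liploc(V_{j+1})$ whose upper gradient $g_j$ from \eqref{eq:upper gradient of discrete convolution} satisfies $\|g_j\|_{L^1(V_{j+1})}\le C\|Du\|(V_{j+1})\le C\|Du\|(U)$, with $\|w_j-u\|_{L^1(V_{j+1})}<\varepsilon_j$, and such that the zero extension $\hat h_j$ of $w_j-u$ from $V_{j+1}$ lies in $\BV(X)$ with $\|D\hat h_j\|(X\setminus V_{j+1})=0$; the number $\varepsilon_j>0$ will be chosen at the end.

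\textbf{Step 3 (Gluing).} Fix a locally finite Lipschitz partition of unity $\{\eta_j\}$ on $U$ with $\supp\eta_j\subset V_{j+1}\setminus\overline{V_{j-1}}$ (with $V_0:=\emptyset$) and $\sum_j\eta_j=\chi_U$, and define
\[
w:=\sum_{j=1}^{\infty}\eta_j w_j\quad\text{on }U.
\]
The sum is locally finite and each $\eta_j w_j$ is locally Lipschitz on $V_{j+1}$, hence $w\in\liploc(U)$ and $\widetilde{w}=w$. Since $\sum_j\eta_j=\chi_U$, on $U$ we have
\[
w-u=\sum_j\eta_j(w_j-u),
\]
so the zero extension $h$ of $w-u$ to $X$ is $\sum_j\eta_j\hat h_j$, a locally finite sum of $\BV(X)$ functions each compactly supported in $V_{j+1}\Subset U$. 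Hence the variation measures $\|D(\eta_j\hat h_j)\|$ are supported in $\overline{\supp\eta_j}\subset V_{j+1}\Subset U$, which gives $\|Dh\|(X\setminus U)=0$. The product estimate $\|D(\eta_j\hat h_j)\|(X)\le\|D\hat h_j\|(X)+\Lip(\eta_j)\|\hat h_j\|_{L^1(X)}$ and the $L^1$-estimate $\|\eta_j(w_j-u)\|_{L^1(U)}\le\varepsilon_j$ allow us to pick $\varepsilon_j$ so small (e.g.\ $\varepsilon_j<2^{-j}\min\{1,\Lip(\eta_j)^{-1}\}\cdot\kappa$) that $\|w-u\|_{L^1(U)}\le\kappa$ and $h\in\BV(X)$.

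\textbf{Step 4 (Upper gradient).} For the required bound $\|g\|_{L^1(U)}\le C\|Du\|(U)$, I exploit the identity $\sum_j(\eta_j(x)-\eta_j(y))=0$ for $x,y\in U$, which gives
\[
w(x)-w(y)=\sum_j\eta_j(x)\bigl(w_j(x)-w_j(y)\bigr)+\sum_j\bigl(\eta_j(x)-\eta_j(y)\bigr)\bigl(w_j(y)-u(y)\bigr),
\]
and therefore the pointwise upper gradient $g\le\sum_j\eta_jg_j+\sum_jg_{\eta_j}|w_j-u|$. The first sum integrates to $\le C\|Du\|(U)$ by the bounded overlap of $\{V_{j+1}\setminus\overline{V_{j-1}}\}$ combined with $\|g_j\|_{L^1(V_{j+1})}\le C\|Du\|(V_{j+1})$ and the Whitney support property; the second is controlled by $\sum_j\Lip(\eta_j)\varepsilon_j<\kappa$ by the choice of $\varepsilon_j$.

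\textbf{Main obstacle.} The hardest point is Step~1: in the metric setting $\partial V$ and $\partial^*V$ need not agree even for sets of finite perimeter, so one must carefully choose measure-theoretic representatives of the superlevel sets $\{d>t_j\}$ to secure $\mathcal H(\partial V_j)<\infty$. A secondary bookkeeping challenge is matching the tolerances $\varepsilon_j$ against the (a priori unbounded) constants $\Lip(\eta_j)$ so that both the $L^1$-error and the series of product variations $\Lip(\eta_j)\|\hat h_j\|_{L^1}$ sum to something small, while simultaneously keeping the leading upper-gradient term bounded by $C\|Du\|(U)$ independent of the exhaustion.
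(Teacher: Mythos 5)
Your partition-of-unity gluing is a genuinely different route from the paper's (which mollifies on an exhaustion at a \emph{fixed} Whitney scale and passes to a weak BV limit), but as written the proposal has two real gaps.

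\textbf{Step 1 is broken.} You apply the BV coarea formula \eqref{eq:coarea} to the distance function $d=\dist(\cdot,X\setminus U)$ to get superlevel sets of locally finite perimeter, which gives $\mathcal H(\partial^*\{d>t_j\})<\infty$ only for the \emph{measure-theoretic} boundary. Corollary~\ref{cor:gluing discrete convolutions}, however, requires finiteness of $\mathcal H(\partial V_j)$ for the \emph{topological} boundary. Your proposed fix --- replacing $\{d>t_j\}$ by a ``measure-equivalent open set whose topological boundary agrees with $\partial^*$ up to an $\mathcal H$-null set'' --- does not exist in general: Example~\ref{ex:enlarged-rationals} produces an open set of finite perimeter whose topological boundary has infinite $\mathcal H$-measure, and no measure-equivalent open representative can repair this. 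The paper avoids the issue by invoking the \emph{Hausdorff coarea inequality}~\eqref{eq:coarea inequality} for Lipschitz maps, which bounds $\int_\R\mathcal H(A\cap\omega^{-1}(t))\,dt\le CL\mu(A)$ directly; since $\omega^{-1}(t)\supset\partial\{\omega>t\}$, this controls the \emph{topological} boundary. One additionally needs the second Lipschitz function $\omega_2(y)=d(y,x)$ to keep the $U_i$ bounded so that $\mu(A)<\infty$ --- your $V_j$ built from $d$ alone will not be bounded when $U$ is unbounded, and then the coarea argument is vacuous.

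\textbf{Step 4 is not a valid upper-gradient computation.} The pointwise identity $w(x)-w(y)=\sum_j\eta_j(x)(w_j(x)-w_j(y))+\sum_j(\eta_j(x)-\eta_j(y))(w_j(y)-u(y))$ is correct, but it does not yield the claimed upper gradient $\sum_j\eta_j g_j+\sum_j g_{\eta_j}|w_j-u|$: the term $\eta_j(x)\int_\gamma g_j\,ds$ is not dominated by $\int_\gamma\eta_j g_j\,ds$ since $\eta_j$ varies along $\gamma$, the endpoint value $|w_j(y)-u(y)|$ cannot be absorbed into a line integral of $g_{\eta_j}|w_j-u|$, and $u$ itself has no upper gradient, so there is no Leibniz rule to invoke here. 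The genuine Leibniz estimate for $w=\sum_j\eta_j w_j$ produces $\sum_j(\eta_j g_j+\Lip(\eta_j)|w_j|)$, where the unavoidable term $\Lip(\eta_j)|w_j|$ (not $|w_j-u|$) is not controllable: $\Lip(\eta_j)$ blows up while $\|w_j\|_{L^1(\supp\eta_j)}$ stays comparable to $\|u\|_{L^1}$. The subtraction-of-$u$ trick works for the \emph{variation measure} of $w-u$ (which is $\BV$) but not for the \emph{upper gradient} of $w$. Finally, the bounded-overlap claim for $\sum_j\eta_j g_j$ is unverified: the Whitney balls of $V_{j+1}$ near $\partial V_{j-1}$ have radius determined by $\dist(\cdot,\partial V_{j+1})$, not by the width of the annulus, so the dilated balls $5\lambda B_k^{(j)}$ can spill far across the annular shells unless the scales $R_j$ and the gaps between exhaustion levels are chosen compatibly (which you do not address).

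The paper's proof sidesteps both difficulties by mollifying every $U_i$ at the \emph{same} scale $R$, so that on a fixed Whitney ball of $U$ the discrete convolutions $v_i$ eventually share a uniform Lipschitz constant (via \eqref{eq:lipschitz constant of discrete convolution}); the weak BV compactness then gives the limit $h$ with all the required estimates without any gluing.
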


\begin{proof}
The following coarea inequality is known to hold: if $\omega\in\Lip(X)$ is $L$-Lipschitz, then
\begin{equation}\label{eq:coarea inequality}
\int_{\R}\mathcal H(A\cap \omega^{-1}(t))\,dt\le C L\mu(A)
\end{equation}
for any Borel set $A\subset X$, see~\cite[Proposition 3.1.5]{AT} or~\cite{Fed}.
The proof in~\cite{AT} deals with Hausdorff measures $\mathcal{H}^{k-m}$, $\mathcal{H}^m$
and $\mathcal{H}^k$ instead of $\mathcal{H}$, $dt$ and $\mu$; however, their proof
of this result holds in our setting when we replace $\mathcal{H}^k$ with $\mu$,
$\mathcal{H}^{k-m}$ with $\mathcal{H}$, and set $m=1$.
By considering the $1$-Lipschitz functions
\[
\omega_1(y):=\dist(y,X\setminus U)\qquad\textrm{and}\qquad \omega_2(y):=d(y,x)
\]
for a fixed $x\in X$, we can pick open sets 
$U_1\Subset U_2\Subset \ldots\subset U$, defined as
\[
U_i:=\{\omega_1>\alpha_i\ \textrm{and} \  \omega_2<1/\alpha_i\}
\]
for some strictly decreasing sequence $\alpha_i\searrow 0$. Clearly $U=\bigcup_{i\in\N}U_i$, and 
by a suitable choice of the sequence $\alpha_i$, \eqref{eq:coarea inequality} gives
$\mathcal H(\partial U_i)<\infty$ 
for each $i\in\N$.

Fix a scale $R>0$.
For each $i\in\N$, define $v_i$ to be the discrete convolution of $u$ with respect to a Whitney 
type covering $\{B_j^i\}_{j\in\N}$ of $U_i$, at scale $R$.
By~\eqref{eq:L1 convergence for discrete convolutions}
we can choose $R$ to be small enough so that
$\Vert v_i-u\Vert_{L^1(U_i)}\le \kappa$ for each $i\in\N$. Each $v_i$ has an upper gradient $g_i$ 
in $U_i$, defined in~\eqref{eq:upper gradient of discrete convolution}, with 
\begin{equation}\label{eq:upper gradients in the sets Ui}
\Vert g_i\Vert_{L^1(U_i)}\le C\Vert Du\Vert(U_i).
\end{equation}
By Corollary~\ref{cor:gluing discrete convolutions}, the function
\[
h_i=\begin{cases}
v_i-u &\textrm{in }U_i,\\
0   &\textrm{in }X\setminus U_i
\end{cases}
\]
satisfies $h_i\in\BV(X)$ and
$\Vert Dh_i\Vert(X\setminus U_i)=0$. Hence
\begin{align*}
\Vert D h_i\Vert(X)
&\le \Vert Dv_i\Vert(U_i)+\Vert Du\Vert(U_i)
\le \Vert g_i\Vert_{L^1(U_i)}+\Vert Du\Vert( U_i)\\
&\le C\Vert Du\Vert(U_i)\le C\Vert Du\Vert(U).
\end{align*}
By the weak compactness of BV functions, see~\cite[Theorem 3.7]{M}, a subsequence that we still 
denote by $h_i$ converges in $L^1_{\loc}(X)$ to a function $h\in\BV(X)$ for which 
we clearly have $h=0$ in $X\setminus U$.
Now let $w:=u+h\in\BV(U)$. Since
$\Vert h_i\Vert_{L^1(U)}=\Vert v_i-u\Vert_{L^1(U_i)}\le \kappa$ for each $i\in\N$, it follows that
$\Vert w-u\Vert_{L^1(U)}=\Vert h\Vert_{L^1(U)}\le \kappa$.

To prove that $\widetilde{w}\in \liploc(U)\cap N^{1,1}(U)$, pick a Whitney type covering $\{B_j\}_{j\in\N}$ of $U$ at scale $R$, and fix a ball $B_j=B(x_j,r_j)$. For large enough $i_0\in\N$,
\begin{align*}
B_j &= B(x_j,\min\{\dist(x_j,X\setminus U)/40\lambda,R\})\\
 &\subset B(x_j,\min\{\dist(x_j,X\setminus U_{i_0})/20\lambda,R\})=:B.
\end{align*}
By~\eqref{eq:lipschitz constant of discrete convolution} we 
know that in the ball $B$, each $v_i$, $i\ge i_0$, has Lipschitz constant at most
\begin{align*}
&C\frac{\Vert Du\Vert(B(x_j,\min\{\dist(x_j,X\setminus U_{i})/4,5\lambda R\}))}
{\mu(B(x_j,\min\{\dist(x_j,X\setminus U_{i})/20\lambda,R\}))}\\
&\qquad   \le C\frac{\Vert Du\Vert(B(x_j,\min\{\dist(x_j,X\setminus U)/4,5\lambda R\}))}{\mu(B(x_j,\min\{\dist(x_j,X\setminus U_{i_0})/20\lambda, R\}))} 
\le C\frac{\Vert Du\Vert(10\lambda B_j)}{\mu(B_j)}.
\end{align*}
In the last inequality we used the fact that $B_j\subset B$, so that $\mu(B_j)\le \mu(B)$. Now the $L^1$-limit $\widetilde{w}$ of the sequence of functions
$v_i$ must be Lipschitz in $B$, and thus in $B_j$, with the same constant, so that it is locally Lipschitz. Since a local Lipschitz constant is always an upper gradient, see e.g. \cite[Proposition 1.11]{Che}, we have also that
\[
g:=C\sum_{j\in\N}\chi_{B_j}\frac{\Vert Du\Vert(10\lambda B_j)}{\mu(B_j)}
\]
is an upper gradient of $\widetilde{w}$ in $U$. By the bounded overlap of the dilated Whitney balls $10\lambda B_j$, $\Vert g\Vert_{L^1(U)}\le C\Vert Du\Vert(U)$.

Now choose $x\in\partial U$ and $r>0$.
Since $h_i\to h$ in $L^1_{\loc}(X)$ and thus in $L^1(B(x,r))$, we have
\begin{align*} 
\int_{B(x,r)\cap U}|h|\,d\mu=\lim_{i\to\infty}\int_{B(x,r)\cap U_i}|h_i|\,d\mu
&= \lim_{i\to\infty}\int_{B(x,r)\cap U_i}|v_i-u|\,d\mu\\
&\le Cr\Vert Du\Vert (B(x,2 r)\cap U),
\end{align*}
where the last inequality follows from~\eqref{eq:discrete convolution boundary value calculation}.
Then by Lemma~\ref{lem:measure in domain}, we have that
\begin{equation}\label{eq:zero boundary value for mollification}
\lim_{r\to 0^+}\frac{1}{\mu(B(x,r))}\int_{B(x,r)}|h|\,d\mu=\lim_{r\to 0^+}\frac{1}{\mu(B(x,r))}\int_{B(x,r)\cap U}|h|\,d\mu=0
\end{equation}
for $\mathcal H$-a.e. $x\in\partial U$. For such $x$ and all $t\neq 0$, we 
conclude that $x\notin \partial^*\{h>t\}$. By the coarea formula~\eqref{eq:coarea}
and~\eqref{eq:def of theta}, this implies
\[
\Vert Dh\Vert(\partial U)
=\int_{\R}P(\{h>t\},\partial U)\,dt
\le C\int_{\R}\mathcal H(\partial^*\{h>t\}\cap\partial U)\,dt=0.
\]
We conclude that $\Vert Dh\Vert(X\setminus U)=0$.
\end{proof}

In this paper, we will only need the following corollary of Theorem~\ref{thm:mollifying in an open set}.
\begin{corollary}\label{cor:mollifying in an open set}
Let $U\subset\Omega\subset X$ be open sets, $u\in\BV(\Omega)$, and $\kappa>0$. 
Then there exists a function $w\in\BV(\Omega)$ with $w=u$ in $\Omega\setminus U$ such that
$\Vert w-u\Vert_{L^1(U)}\le\kappa$, $\widetilde{w}|_{U}\in N^{1,1}(U)\cap \liploc(U)$ with an
upper gradient $g$ satisfying $\Vert g\Vert_{L^1(U)}\le C\Vert Du\Vert(U)$, and
\begin{equation}\label{eq:variation measure for mollified function}
\Vert D(w-u)\Vert(\Omega\setminus U)=0.
\end{equation}
\end{corollary}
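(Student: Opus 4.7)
The plan is to reduce the corollary directly to Theorem~\ref{thm:mollifying in an open set} applied to the restriction $u|_U$, and then glue the result back to $u$ outside $U$. Since $U$ is open and $u\in\BV(\Omega)$, we have $u|_U\in\BV(U)$ with $\|D(u|_U)\|(U)=\|Du\|(U)$ (as the variation is a Radon measure on $\Omega$).

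First I would apply Theorem~\ref{thm:mollifying in an open set} to the function $u|_U$ with the given parameter $\kappa>0$, obtaining a function $w_0\in\BV(U)$ such that $\widetilde{w_0}\in N^{1,1}(U)\cap\liploc(U)$ has an upper gradient $g$ with $\|g\|_{L^1(U)}\le C\|Du\|(U)$, $\|w_0-u\|_{L^1(U)}\le\kappa$, and the extension
\[
h:=\begin{cases} w_0-u & \text{in } U,\\ 0 & \text{in } X\setminus U\end{cases}
\]
lies in $\BV(X)$ with $\|Dh\|(X\setminus U)=0$.

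Next I would define $w:=u+h\in\BV(\Omega)$. By construction $w=w_0$ on $U$ and $w=u$ on $\Omega\setminus U$, so the requirement $w=u$ in $\Omega\setminus U$ is immediate, and $\widetilde{w}|_U=\widetilde{w_0}$ inherits membership in $N^{1,1}(U)\cap\liploc(U)$ together with the upper gradient $g$ satisfying the desired $L^1$ bound. The $L^1$ estimate also transfers directly: $\|w-u\|_{L^1(U)}=\|h\|_{L^1(U)}=\|w_0-u\|_{L^1(U)}\le\kappa$. Finally, since $w-u=h$ on all of $\Omega$, the variation estimate \eqref{eq:variation measure for mollified function} follows from
\[
\|D(w-u)\|(\Omega\setminus U)=\|Dh\|(\Omega\setminus U)\le\|Dh\|(X\setminus U)=0.
\]

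There is really no obstacle here — the only minor point to be careful about is confirming that the variation measure of $u|_U$ on $U$ coincides with $\|Du\|(U)$ (so that the upper gradient bound produced by Theorem~\ref{thm:mollifying in an open set} yields $\|g\|_{L^1(U)}\le C\|Du\|(U)$ as stated), which follows from the Radon-measure property of $\|Du\|$ established in~\cite[Theorem~3.4]{M}. Everything else is a direct unpacking of the already-proven theorem, with the gluing preserved automatically by the zero-extension structure of $h$.
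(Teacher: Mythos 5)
Your proof is correct and takes essentially the same approach as the paper: apply Theorem~\ref{thm:mollifying in an open set} to $u|_U$, set $w:=u+h$ where $h$ is the zero extension from the theorem, and read off the desired properties. The paper's own proof is a one-liner saying exactly this; you have simply written out the verification in full detail.
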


\begin{proof}
Let $w:=u+h$, where $h\in\BV(X)$ is given in Theorem~\ref{thm:mollifying in an open set}. Then $w\in\BV(\Omega)$, and the required properties of $w$ were shown in the theorem.
\end{proof}

We will also need the following consequence of Theorem~\ref{thm:mollifying in an open set}.
The proof will be similar to one given in~\cite{KKST}. 

\begin{proposition}\label{prop:uniform convergence and continuity}
Let $\Omega\subset X$ be an open set, let $u\in\BV(\Omega)$,
and let $H\subset\Omega$ be a closed set such that $\widetilde{u}|_H$ is continuous and
\begin{equation}\label{eq:uniform Lebesgue point convergence}
\vint{B(x,r)}|u-\widetilde{u}(x)|\,d\mu\to 0\qquad \textrm{as }r\to 0
\end{equation}
locally uniformly in the set $H$. Let $w$ be the function given
by Corollary~\ref{cor:mollifying in an open set}
with $U=\Omega\setminus H$ and any $\kappa>0$.
Then $\widetilde{w}$ is continuous in $\Omega$, and $\widetilde{w}(x)=\widetilde{u}(x)$ for all $x\in H$.
\end{proposition}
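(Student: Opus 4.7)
The plan is to establish a single uniform estimate from which both assertions follow. By Corollary~\ref{cor:mollifying in an open set}, $\widetilde w|_U\in\liploc(U)$, so $\widetilde w$ is already continuous at every $x\in U$; only points of $H$ require work. I will show that for every $x_0\in H$ and every $\epsilon>0$ there exist $\delta,r_0>0$ such that
\[
|\widetilde w(z)-\widetilde u(x_0)|\le C\epsilon\quad\text{for all }z\in U\cap B(x_0,\delta).
\]
Combined with $w=u$ on $H$ and the locally uniform hypothesis at $x_0$, this yields $\vint{B(z,r)}|w-\widetilde u(x_0)|\,d\mu\le C\epsilon$ for $z\in B(x_0,\delta)$ and $0<r<r_0$. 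Taking $z=x_0$ and letting $r\to 0^+$, a Chebyshev-type argument on sub- and superlevel sets of $w$ gives $w^\wedge(x_0)=w^\vee(x_0)=\widetilde u(x_0)$, hence $\widetilde w(x_0)=\widetilde u(x_0)$. Continuity of $\widetilde w$ at $x_0$ is then immediate from the pointwise bound on $U$ together with continuity of $\widetilde u|_H$ and the identification applied at nearby points of $H$.

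To prove the pointwise estimate, I unpack the construction from Theorem~\ref{thm:mollifying in an open set}: $w=u+h$ where $h$ is the $L^1_{\loc}(X)$-limit of the zero-extensions of $v_i-u$, with $v_i=\sum_j u_{B_j^i}\phi_j^i$ the discrete convolution of $u$ on the exhaustion $U_i\nearrow U$ at scale $1/i$. By continuity of $\widetilde u|_H$ and the locally uniform Lebesgue hypothesis, fix a neighborhood $V\subset H$ of $x_0$ and $r_0>0$ with $|\widetilde u(y)-\widetilde u(x_0)|<\epsilon$ and $\vint{B(y,s)}|u-\widetilde u(y)|\,d\mu<\epsilon$ for every $y\in V$ and $0<s\le r_0$. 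For $\delta$ sufficiently small (in terms of $r_0$ and $\lambda$), let $z\in U_i\cap B(x_0,\delta)$, and let $B_j^i$ be a Whitney ball with $z\in 2B_j^i$. Property~\eqref{eq:radius-dist-est} gives $40\lambda\, r_j^i\le\dist(x_j^i,H)\le 2r_j^i+\delta$, so $r_j^i\le C\delta$. The nearest-point projection $y_j\in H$ of $x_j^i$ then satisfies $d(y_j,x_0)\le C\delta$ (hence $y_j\in V$) and $B_j^i\subset B(y_j,Cr_j^i)$ with $Cr_j^i\le r_0$. The uniform Lebesgue estimate at $y_j$, together with doubling (which bounds $\mu(B(y_j,Cr_j^i))/\mu(B_j^i)$), gives $|u_{B_j^i}-\widetilde u(y_j)|\le C\epsilon$, and continuity of $\widetilde u|_H$ upgrades this to $|u_{B_j^i}-\widetilde u(x_0)|\le C\epsilon$. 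Since $v_i(z)$ is a convex combination of such averages, $|v_i(z)-\widetilde u(x_0)|\le C\epsilon$ uniformly in large $i$; this $L^\infty$-bound passes to the $L^1$-limit $w$ pointwise a.e., and continuity of $\widetilde w$ on $U$ promotes it to the desired everywhere estimate.

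The main obstacle I anticipate is the Whitney-scale matching step: ensuring that every Whitney ball contributing to $v_i$ near $B(x_0,\delta)$ can be paired with a companion point $y_j\in V\cap H$ at which the uniform Lebesgue estimate applies on a ball of comparable size, with a doubling ratio bounded independently of $j,i$. Property~\eqref{eq:radius-dist-est} is precisely the geometric ingredient that makes this possible, forcing Whitney radii to be comparable to $\dist(x_j^i,H)$ and therefore bounded by a constant multiple of $\delta$. Once this is arranged, the locally uniform Lebesgue hypothesis aggregates into a single $L^\infty$-bound on $v_i$ near $x_0$, which survives the $L^1$-limit and delivers both conclusions of the proposition.
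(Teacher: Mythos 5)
Your overall strategy is the same as the paper's: you reduce everything to showing that the discrete convolution values $v_i(z)$ near a point $x_0\in H$ are close to $\widetilde{u}(x_0)$, by pairing Whitney balls $B_j^i$ with points of $H$ and invoking the locally uniform Lebesgue hypothesis. The subsequent passage to the $L^1_{\loc}$-limit and the identification of $\widetilde{w}(x_0)$ with $\widetilde{u}(x_0)$ via an approximate-limit computation are sound.

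However, there is a genuine gap in the Whitney-scale matching step, exactly where you anticipate trouble. You assert $B_j^i\subset B(y_j,Cr_j^i)$ with a bounded doubling ratio $\mu(B(y_j,Cr_j^i))/\mu(B_j^i)$, which needs the \emph{lower} bound $\dist(x_j^i,H)\le Cr_j^i$, i.e. $r_j^i\gtrsim\dist(x_j^i,H)$. Property~\eqref{eq:radius-dist-est} does \emph{not} give this: the Whitney covering is of $U_i$, not of $U=\Omega\setminus H$, so $r_j^i$ is comparable to $\dist(x_j^i,X\setminus U_i)$, which is $\le\dist(x_j^i,H)$ but can be far smaller when $x_j^i$ lies near $\partial U_i$ inside $U$. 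In that regime $r_j^i\ll\dist(x_j^i,H)$, so $B(y_j,Cr_j^i)\not\supset B_j^i$ and the doubling ratio blows up; the inequality $40\lambda r_j^i\le\dist(x_j^i,H)$ you cite is only the harmless upper bound in the wrong direction. The fix is the one the paper makes explicit: for the fixed target point (your $z$, the paper's $y_k$), one must first take $i$ so large that $U_i$ contains $\{\dist(\cdot,H)>\dist(z,H)/2\}\cap B(x_0,2\delta)$; only then does $\dist(x_j^i,X\setminus U_i)\gtrsim\dist(z,H)$, and hence $r_j^i\gtrsim\dist(z,H)\gtrsim\dist(x_j^i,H)$, hold for the Whitney balls with $z\in 2B_j^i$. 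Since $i\to\infty$ is taken anyway to identify the limit with $\widetilde{w}(z)$, this dependence of ``large $i$'' on $z$ is harmless, but it must be stated, and the comparability of $r_j^i$ to $\dist(x_j^i,H)$ cannot be claimed to follow from~\eqref{eq:radius-dist-est} alone.
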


\begin{proof}
Observe that $\widetilde{w}$ is continuous
in $U=\Omega\setminus H$ by Corollary~\ref{cor:mollifying in an open set}.

Let $R$ be the scale used in the construction of the Whitney type coverings of the sets $U_i$ in
Theorem~\ref{thm:mollifying in an open set}, corresponding to the given value of $\kappa$.
Fix $x\in H$. If $x$ is in the interior of $H$, then $\widetilde{w}$, which agrees with $\widetilde{u}$ in the interior of $H$, 
is continuous at $x$. Now suppose that $x$ is not in the interior of $H$.
Let $\delta\in (0,R)$ such that $B(x,3\delta)\subset \Omega$.
Consider a sequence $y_k\in B(x,\delta)\setminus H$ that converges to $x$.
We note that for every $y_k$ there exists $x_k\in H$ 
for which $d(y_k,x_k)=\dist(y_k,H)$. Since $d(y_k,x_k)\le d(y_k,x)\to 0$ as $k\to \infty$,  it follows that 
$d(x_k,x)\to 0$. The latter, together with the assumption that $\widetilde{u}|_H$ is continuous, 
implies that $\widetilde{u}(x_k)\to \widetilde{u}(x)$. So we only need to show that 
$|\widetilde{w}(y_k)-\widetilde{u}(x_k)|\to 0$ as $k\to\infty$.

Fix $k\in\N$. For large enough $i\in\N$, the sets $U_i$ (defined in the proof of 
Theorem~\ref{thm:mollifying in an open set}) 
satisfy
\begin{equation}\label{eq:property of Ui for large i}
U_i\supset \{y\in B(x,2\delta) :\,  \dist(y,H)>\dist(y_k,H)/2\}\ni y_k. 
\end{equation}
Fixing such $i$, by the 
properties of the Whitney type covering $\{B^i_j\}_{j\in \N}=\{B(x^i_j,r^i_j)\}_{j\in \N}$ of  $U_i$,
for any $2B^i_j \ni y_k$ we have 
\begin{equation}\label{eq:Madayan1}
40\lambda r_j^i\le \dist(x_j^i, H)\le 2r_j^i+\dist(y_k,H)<2r_j^i+\delta,
\end{equation}
and it follows that $r_j^i<R$. Therefore
\[
r_j^i=\min\left\{\frac{\dist(x_j^i, X\setminus U_i)}{40\lambda},R\right\}
=\frac{\dist(x_j^i, X\setminus U_i)}{40\lambda}\ge \frac{\dist(y_k,X\setminus U_i)-2r_j^i}{40\lambda},
\]
from which we see that 
\[
r_j^i\ge \frac{\dist(y_k,X\setminus U_i)}{50\lambda}\overset{\eqref{eq:property of Ui for large i}}{\ge} \frac{\dist(y_k,H)}{100\lambda} 
=\frac{d(y_k,x_k)}{100\lambda}.
\]
Thus by the doubling property of $\mu$, $C\mu(2B_j^i)\ge \mu(B(x_k,2d(x_k,y_k)))$ for 
$C=C(C_d,\lambda)$. Furthermore, by the first two inequalities of~\eqref{eq:Madayan1},
\[
 r_j^i\le \frac{d(y_k,x_k)}{38\lambda},
\]
so that $2B_j^i\subset B(x_k,2d(x_k,y_k))$.  Recall that $w$ was defined in the proof of Theorem~\ref{thm:mollifying in an open set} as the limit of the discrete convolutions $v_i$
of $u$ in $U_i$. Noting that $k$ and $i$ are fixed 
and that the summations below are over indices $j$, we have
\begin{align*}
|v_i(y_k)-\widetilde{u}(x_k)|
&=\Bigg|\sum_{y_k\in 2B_j^i}\phi_j^i(y_k)(u_{B_j^i}-\widetilde{u}(x_k))\Bigg|\\
&\le C\sum_{y_k\in 2B_j^i}\vint{2B_j^i}|u-\widetilde{u}(x_k)|\,d\mu\\
&\le C\sum_{y_k\in 2B_j^i}\vint{B(x_k,2d(x_k,y_k))}|u-\widetilde{u}(x_k)|\,d\mu\\
&\le C C_0\vint{B(x_k,2d(x_k,y_k))}|u-\widetilde{u}(x_k)|\,d\mu,
\end{align*}
where $C_0$ was the overlap constant of the Whitney balls. Letting $i\to\infty$, we get
\[
|\widetilde{w}(y_k)-\widetilde{u}(x_k)|\le C\vint{B(x_k,2d(x_k,y_k))}|u-\widetilde{u}(x_k)|\,d\mu,
\]
which converges to 0 as
$k\to\infty$, because the convergence in~\eqref{eq:uniform Lebesgue point convergence} was locally uniform. In total, 
$\widetilde{w}(y_k)\to \widetilde{u}(x)$ and then $\widetilde{w}(x)=\widetilde{u}(x)$ for every $x\in H$, so we have the 
desired conclusion.
\end{proof}

\section{Proof of Theorem~\ref{thm:main result}: outside the jump set}\label{sec:proof outside jump set}

In this section we use the tools developed in the previous section to prove one part of the main theorem of this
paper, Theorem~\ref{thm:main result}.
As a by-product, we obtain some approximation results for $\BV$ functions. First, we highlight some properties
of the $1$-capacity $\capa_1$ relevant to this paper --- recall the definition from~\eqref{eq:cap-def}.

\begin{remark}\label{rmk:capacities etc}
From \cite[Lemma 3.4]{KKST3} it follows that $\capa_1(A)\le 2C_d\mathcal H_1(A)$ for any
$A\subset X$. 
On the other hand, by combining~\cite[Theorem~4.3]{HaKi} and the proof of~\cite[Theorem 5.1]{HaKi}, 
we know that $\mathcal H_{\eps}(A)\le C(C_d,C_P,\lambda,\eps)\capa_1(A)$ for 
any $A\subset X$ and $\eps>0$. Thus we could also control the size of the "exceptional set" $G$ in Theorem~\ref{thm:main result} 
and elsewhere by its $\mathcal H_{\eps}$-measure, for arbitrarily small $\eps>0$.
Finally, we note that $\capa_1$ is an outer capacity, meaning that
\[
\capa_1(A)=\inf\{\capa_1(U):\,U\supset A\textrm{ is open}\}
\]
for any $A\subset X$,
see e.g. \cite[Theorem 5.31]{BB}. Thus in Theorem~\ref{thm:main result} and elsewhere we can always make 
the set $G$ open, even if its construction does not automatically make it as such.
\end{remark}

A version of the following lemma was previously known  
for Newton-Sobolev functions (see e.g. \cite{KKST3}).

\begin{lemma}\label{lem:from BV to uniform convergence}
Let $u_i,u\in\BV(X)$ with $u_i\to u$ in $\BV(X)$. Let $\eps>0$. Then there exists 
$F\subset X$ with $\capa_1 (F)<\eps$ such that, by picking a subsequence if necessary, 
$u_i^{\wedge}\to u^{\wedge}$ and $u_i^{\vee}\to u^{\vee}$ {\rm(}and thus also 
$\widetilde{u_i}\to \widetilde{u}${\rm)} uniformly in $X\setminus F$.
\end{lemma}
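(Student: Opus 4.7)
The plan is to extract a fast-converging subsequence and then use a capacity weak-type estimate for the approximate limits of $u_i-u$. First I would pass to a subsequence, still denoted $u_i$, with $\Vert u_i-u\Vert_{\BV(X)}\le\eps\,2^{-2i}$ for all $i$.

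The key tool is the capacity weak-type bound
\[
\capa_1\bigl(\{v^\vee>\lambda\}\cup\{v^\wedge<-\lambda\}\bigr)\le\frac{C\Vert v\Vert_{\BV(X)}}{\lambda},\qquad v\in\BV(X),\ \lambda>0.
\]
To prove it, suppose $v^\vee(x)>\lambda$ and pick $s\in(\lambda,v^\vee(x))$ from a countable set of levels such that $\{v>s\}$ has finite perimeter (possible for a.e.~$s$ by the coarea formula~\eqref{eq:coarea}). Since $s<v^\vee(x)$, the point $x$ is either a density-one point of $\{v>s\}$ or lies in $\partial^*\{v>s\}$; for $\mathcal H$-a.e.~such $x$ the density estimate~\eqref{eq:density of E} yields $\liminf_{r\to 0^+}\mu(B(x,r)\cap\{v>s\})/\mu(B(x,r))\ge\gamma$. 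Consequently $\mathcal M_1|v|(x)\ge s\gamma/2>\lambda\gamma/2$, so $\{v^\vee>\lambda\}\subset\{\mathcal M_1|v|>\lambda\gamma/2\}$ up to a set of $\mathcal H$-measure (hence $1$-capacity) zero. Applying Lemma~\ref{lem:capacity level set maximal function} to $|v|$ and using $\Vert|v|\Vert_{\BV(X)}\le\Vert v\Vert_{\BV(X)}$ finishes the case, and $\{v^\wedge<-\lambda\}$ is symmetric via $-v$.

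Applying this to $v=u_i-u$ with $\lambda=2^{-i}$ produces sets $F_i$ with $\capa_1(F_i)\le C\eps\,2^{-i}$, outside which $(u_i-u)^\vee\le 2^{-i}$ and $(u_i-u)^\wedge\ge -2^{-i}$. Taking $I_0$ large enough, $F:=\bigcup_{i\ge I_0}F_i$ satisfies $\capa_1(F)<\eps$, and by Remark~\ref{rmk:capacities etc} we may take $F$ open.

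Finally, for $x\in X\setminus F$ and $i\ge I_0$, the set inclusion $\{u_i>t\}\subset\{u>t-s\}\cup\{u_i-u>s\}$, applied with $s>2^{-i}$ and $t>u^\vee(x)+s$, makes both pieces density-zero at $x$, whence $u_i^\vee(x)\le u^\vee(x)+2^{-i}$; the reverse inequality follows by interchanging the roles of $u_i$ and $u$ (using $(u_i-u)^\wedge\ge -2^{-i}$), and the analogous inclusion $\{u_i<t\}\subset\{u<t+s\}\cup\{u_i-u<-s\}$ handles $u^\wedge$. Thus $u_i^\vee\to u^\vee$ and $u_i^\wedge\to u^\wedge$ (and hence $\widetilde{u_i}\to\widetilde{u}$) uniformly on $X\setminus F$, with rate $2^{-i}$. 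The main obstacle is the capacity weak-type estimate for $\{v^\vee>\lambda\}$; once it is in place, the rest is a purely set-theoretic deduction that avoids any issues with existence of density limits.
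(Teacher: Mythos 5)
Your argument is essentially the paper's proof: both establish the weak-type capacity estimate for $\{v^\vee>\lambda\}$ by noting that, outside an $\mathcal H$-null exceptional set built from countably many good super-level sets, the density estimate~\eqref{eq:density of E} forces $\mathcal M_1 v\gtrsim \gamma\,v^\vee$, then invoke Lemma~\ref{lem:capacity level set maximal function}, pass to a rapidly convergent subsequence with $\Vert u_i-u\Vert_{\BV(X)}\lesssim 2^{-2i}$, and sum the capacities of the resulting level sets $F_i$. The one place to tighten is the final set-theoretic step: applying $\{u_i>t\}\subset\{u>t-s\}\cup\{u_i-u>s\}$ with $t>u^\vee(x)+s$ tacitly assumes $u^\vee(x)<\infty$, and the reverse inequality and the $u^\wedge$ variants likewise need the relevant approximate limits to be finite; the paper disposes of this by absorbing $\{|u^\vee|=\infty\}\cup\{|u_i^\vee|=\infty\}$, which is $\mathcal H$-null by~\cite[Lemma~3.2]{KKST2} and hence of zero $1$-capacity, into each $F_i$, and you should add the same sets to yours.
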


\begin{proof}
For $v\in\BV(X)$, by Lemma~\ref{lem:capacity level set maximal function} we know that
\[
\capa_1 (\{x\in X:\,\mathcal M_1 v(x)>t\})\le \frac{C_1}{t}\Vert v\Vert_{\BV(X)}
\]
for any $t>0$,
where $C_1$ is the constant from the lemma, corresponding to the choice $R=1$. By the coarea formula~\eqref{eq:coarea}, there exists a countable dense set 
$T\subset \R$ such that for every $s\in T$, $P(\{v>s\},X)<\infty$. 
Recall the definition of $\Sigma_\gamma E$ for sets $E\subset X$ from~\eqref{eq:density of E}. 
We set
\[
N:=\bigcup_{s\in T}\partial^*\{v>s\}\setminus\Sigma_{\gamma}\{v>s\}.
\]
By~\eqref{eq:density of E} we know that $\mathcal H(N)=0$. For $x\in X\setminus N$, 
if $t>0$ and $t<v^{\vee}(x)$, by the definition of the upper approximate limit we have that
\[
\limsup_{r\to 0^+}\frac{\mu(B(x,r)\cap \{v>t\})}{\mu(B(x,r))}>0.
\]
Then, since $x\in X\setminus N$, for any $s<v^{\vee}(x)$ with $s\in T$ we have
\[
\liminf_{r\to 0^+}\frac{\mu(B(x,r)\cap \{v>s\})}{\mu(B(x,r))}\ge\gamma.
\]
Thus for any $v\in \BV(X)$ we have that
$\mathcal M_1 v(x)\ge \gamma v^{\vee}(x)$ for any $x\in X\setminus N$, and so
\begin{equation}\label{eq:capacity estimate for superlevel sets}
\begin{split}
\capa_1(\{x\in X:\, v^{\vee}(x)>t\})
&=\capa_1(\{x\in X\setminus N:\, v^{\vee}(x)>t\})\\
&\le \capa_1(\{x\in X\setminus N:\, \mathcal M_1 v(x)>\gamma t\})\\
&\le \frac{C_1}{\gamma t}\Vert v\Vert_{\BV(X)}
\end{split}
\end{equation}
for any $t>0$. 

Now let $u_i, u$ be as in the statement of the lemma.
By picking a subsequence if necessary, we can assume that for each $i\in\N$,
$\Vert u_i-u\Vert_{\BV(X)}\le 2^{-2i}\gamma/C_1$. It is easy to check that we can write
\[
\{|u_i^{\vee}-u^{\vee}|>2^{-i}\}\subset \{|u_i-u|^{\vee}>2^{-i}\}
\cup \{|u^{\vee}|=\infty\}\cup \{|u_i^{\vee}|=\infty\}=:F_i.
\]
By \cite[Lemma 3.2]{KKST2} we know that $\mathcal H(\{|u^{\vee}|=\infty\}\cup \{|u_i^{\vee}|=\infty\})=0$, and 
then by~\eqref{eq:capacity estimate for superlevel sets}, $\capa_1(F_i)\le 2^{-i}$ for each $i\in\N$, so that for large 
enough $k\in\N$ we have
\[
\capa_1\left(\bigcup_{i=k}^{\infty}F_i\right)< \eps.
\]
Note that $u_i^{\vee}\to u^{\vee}$ uniformly in $X\setminus \bigcup_{i=k}^{\infty}F_i$. Similarly we get 
$u_i^{\wedge}\to u^{\wedge}$ uniformly in $X\setminus\bigcup_{i=k}^{\infty}F_i$.
\end{proof}

Recall that the jump set $S_u$ of a BV function $u$ is defined as the set where $u^{\wedge}<u^{\vee}$.

\begin{lemma}\label{lem:density of continuous functions}
Let $u\in\BV(X)$ with $\mathcal H(S_u)=0$. Then there exists a sequence $w_i\in \BV(X)\cap C(X)$ with $w_i\to u$ in $\BV(X)$.
\end{lemma}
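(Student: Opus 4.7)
The plan is to combine the mollification machinery of Corollary~\ref{cor:mollifying in an open set} and Proposition~\ref{prop:uniform convergence and continuity} with a Lusin--Egorov argument applied to the finite Radon measure $\Vert Du\Vert$. Two preliminary facts drive the construction. First, for $u\in\BV(X)$ the Lebesgue-type limit
\[
\lim_{r\to 0^+}\vint{B(x,r)}|u-\widetilde{u}(x)|\,d\mu=0
\]
is a standard fact of metric $\BV$-theory at $\mathcal H$-a.e.~$x\in X\setminus S_u$; combined with $\mathcal H(S_u)=0$ it therefore holds at $\mathcal H$-a.e.~$x\in X$. Second, $\Vert Du\Vert\ll\mathcal H$: if $\mathcal H(F)=0$, the coarea formula~\eqref{eq:coarea} together with~\eqref{eq:def of theta} gives
\[
\Vert Du\Vert(F)=\int_{\R}P(\{u>t\},F)\,dt=\int_{\R}\int_{F\cap\partial^*\{u>t\}}\theta_{\{u>t\}}\,d\mathcal H\,dt=0.
\]
Together these imply that the Lebesgue-type limit holds $\Vert Du\Vert$-a.e.~in $X$, and that the set $\{|\widetilde u|=\infty\}$ is both $\mathcal H$-null and $\Vert Du\Vert$-null.

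Applying Egorov's theorem to the $\Vert Du\Vert$-measurable sequence $f_n(x):=\sup_{0<r\le 1/n}\vint{B(x,r)}|u-\widetilde u(x)|\,d\mu$, and Lusin's theorem to the Borel function $\widetilde u$, both with respect to the finite Borel regular measure $\Vert Du\Vert$, I would produce for each $i\in\N$ a closed set $H_i\subset X$ such that $\widetilde u|_{H_i}$ is continuous, the Lebesgue-type limit above converges uniformly on $H_i$, and $\Vert Du\Vert(X\setminus H_i)<1/i$. Proposition~\ref{prop:uniform convergence and continuity} then applies with $\Omega=X$, $H=H_i$, and $\kappa=1/i$, yielding $w_i\in\BV(X)$ whose standard representative $\widetilde{w_i}$ is continuous on $X$ and satisfies $\widetilde{w_i}=\widetilde u$ on $H_i$. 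By Corollary~\ref{cor:mollifying in an open set} applied with $U=X\setminus H_i$,
\[
\Vert w_i-u\Vert_{L^1(X)}\le 1/i,\qquad \Vert D(w_i-u)\Vert(X)=\Vert D(w_i-u)\Vert(X\setminus H_i)\le C\Vert Du\Vert(X\setminus H_i)<C/i,
\]
where the second inequality uses $\Vert Dw_i\Vert(X\setminus H_i)\le \Vert g\Vert_{L^1(X\setminus H_i)}\le C\Vert Du\Vert(X\setminus H_i)$ from Corollary~\ref{cor:mollifying in an open set} together with the triangle inequality. Taking $w_i:=\widetilde{w_i}\in\BV(X)\cap C(X)$ yields $w_i\to u$ in $\BV(X)$, as desired.

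The main obstacle is to secure the three conditions on $H_i$ simultaneously, in particular the $\Vert Du\Vert$-mass bound. This is exactly where the hypothesis $\mathcal H(S_u)=0$ is used: it kills the jump part of $\Vert Du\Vert$, so that the absolute continuity $\Vert Du\Vert\ll\mathcal H$ turns the $\mathcal H$-null exceptional set for the Lebesgue-type limit into a $\Vert Du\Vert$-null set, permitting Egorov's theorem to be invoked directly with respect to $\Vert Du\Vert$. Once $H_i$ is in hand, the mollification machinery of the previous section does the rest.
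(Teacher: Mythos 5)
Your proof is correct and follows essentially the same route as the paper: the Lebesgue-point property of $\widetilde{u}$ plus the absolute continuity $\Vert Du\Vert\ll\mathcal H$ (via~\eqref{eq:def of theta} and the coarea formula), then Lusin and Egorov with respect to $\Vert Du\Vert$ to produce compact sets $H_i$ with $\Vert Du\Vert(X\setminus H_i)<1/i$ on which $\widetilde u$ is continuous and the Lebesgue-type convergence is uniform, and finally Corollary~\ref{cor:mollifying in an open set} with $U=X\setminus H_i$ together with Proposition~\ref{prop:uniform convergence and continuity} to obtain continuous $w_i$ converging to $u$ in $\BV(X)$. The only cosmetic difference is that you phrase the Lebesgue-point input as holding $\mathcal H$-a.e.\ off $S_u$ and then invoke $\mathcal H(S_u)=0$, whereas the paper cites the reference directly; the logical content is the same.
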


\begin{proof}
By~\cite[Theorem 3.5]{KKST2} we know that
\begin{equation}\label{eq:Lebesgue points in Cantor part}
\lim_{r\to 0^+}\,\vint{B(x,r)}|u-\widetilde{u}(x)|\,d\mu=0
\end{equation}
for $\mathcal H$-a.e. $x\in X$, in particular for $\Vert Du\Vert$-a.e.~$x\in X$, as
by~\eqref{eq:def of theta} and the coarea formula~\eqref{eq:coarea},
$\Vert Du\Vert$ is always absolutely continuous with respect to $\mathcal H$.

Note that $\widetilde{u}$ is a Borel function, and hence is measurable with respect to the Radon measure $\Vert Du\Vert$.
By Lusin's theorem and Egorov's theorem, 
we can pick compact sets $H_i\subset X$ with 
$\Vert Du\Vert(X\setminus H_i)<1/i$, $i\in\N$, such that $\widetilde{u}|_{H_i}$ is continuous and the 
convergence in~\eqref{eq:Lebesgue points in Cantor part}
as $r\to 0$ is uniform in $H_i$.
For each $i\in\N$, apply Corollary~\ref{cor:mollifying in an open set} with 
$U=X\setminus H_i$ and $\kappa=1/i$, to obtain a function $w_i\in\BV(X)$
with $w_i=u$ in $H_i$.
We have $w_i\to u$ in $L^1(X)$, and
\begin{align*}
\Vert D(w_i-u)\Vert(X)
&=\Vert D(w_i-u)\Vert(X\setminus H_i)\\
&\le \Vert Dw_i\Vert(X\setminus H_i)+\Vert Du\Vert(X\setminus H_i)\\
&\le C\Vert Du\Vert(X\setminus H_i)+\Vert Du\Vert(X\setminus H_i)\\
&\le C/i
\end{align*}
for each $i\in\N$, so that in fact $w_i\to u$ in $\BV(X)$. By Proposition~\ref{prop:uniform convergence 
and continuity}, each $\widetilde{w_i}$ is continuous in $X$.
\end{proof}

\begin{remark}\label{rem:JumpSBV}
If $u\in\BV(X)$ and we have a sequence of continuous functions $u_i\to u$ in $\BV(X)$, 
then  $\Vert Du_i\Vert(S_u)=0$ by the facts that $S_u$ is $\sigma$-finite with respect to $\mathcal H$ (by e.g. the 
decomposition~\eqref{eq:decomposition}) and $\Vert Du_i\Vert^j(X)=0$ for all $i\in\N$, see~\cite[Theorem 5.3]{AMP}. Thus
\[
\Vert Du\Vert(S_u)=\Vert D(u-u_i)\Vert(S_u)\le \Vert D(u-u_i)\Vert(X)\to 0
\]
as $i\to\infty$, so that $\mathcal H(S_u)=0$. Hence 
the subspace $\{u\in\BV(X):\,\mathcal H(S_u)=0\}$ is the closure of $\BV(X)\cap C(X)$ in $\BV(X)$.
\end{remark}

Now we turn to our first 
quasicontinuity result.

\begin{proposition}\label{prop:quasicontinuity without jump part}
Let $u\in\BV(X)$ with $\mathcal H(S_u)=0$, and let $\eps>0$. Then there exists 
$G\subset X$ 
with $\capa_1(G)<\eps$ such that $\widetilde{u}|_{X\setminus G}$ is continuous.
\end{proposition}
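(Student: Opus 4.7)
The plan is to assemble Proposition~\ref{prop:quasicontinuity without jump part} directly from the two preceding lemmas. First I would apply Lemma~\ref{lem:density of continuous functions} to obtain a sequence $w_i\in \BV(X)\cap C(X)$ with $w_i\to u$ in $\BV(X)$. For each such continuous $w_i$, the definitions~\eqref{eq:lower approximate limit} and~\eqref{eq:upper approximate limit} of $w_i^{\wedge}$ and $w_i^{\vee}$ give $w_i^{\wedge}(x)=w_i(x)=w_i^{\vee}(x)$ for every $x\in X$ (for any $t<w_i(x)$, continuity puts the set $\{w_i<t\}$ outside some open ball around $x$), so $\widetilde{w_i}=w_i$ pointwise.

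Next I would feed this sequence into Lemma~\ref{lem:from BV to uniform convergence}: by passing to a subsequence I obtain a set $F\subset X$ with $\capa_1(F)<\eps$ on which $\widetilde{w_i}\to \widetilde{u}$ uniformly. Restricted to $X\setminus F$, the functions $\widetilde{w_i}=w_i$ are continuous (in the subspace topology) and converge uniformly to $\widetilde{u}|_{X\setminus F}$. A uniform limit of continuous functions is continuous, so $\widetilde{u}|_{X\setminus F}$ is continuous.

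To finish I invoke the outer regularity of $\capa_1$ recorded in Remark~\ref{rmk:capacities etc} to choose an open set $G\supset F$ with $\capa_1(G)<\eps$; then $X\setminus G\subset X\setminus F$, so the restriction $\widetilde{u}|_{X\setminus G}$ is still continuous.

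There is no real obstacle here beyond the pointwise identification $\widetilde{w_i}=w_i$ for continuous BV representatives, which is immediate from the definitions of $w_i^{\wedge}$ and $w_i^{\vee}$. The substantive work has already been carried out in Lemma~\ref{lem:density of continuous functions} (which turns the hypothesis $\mathcal H(S_u)=0$ into BV-approximability by continuous functions via the mollification theorem) and Lemma~\ref{lem:from BV to uniform convergence} (which promotes BV convergence to uniform convergence of precise representatives outside a set of small $1$-capacity, via a maximal-function/capacity weak-type estimate).
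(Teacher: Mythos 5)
Your proof follows the same route as the paper's: apply Lemma~\ref{lem:density of continuous functions} to get continuous BV approximants, then Lemma~\ref{lem:from BV to uniform convergence} to upgrade to uniform convergence of the precise representatives off a small-capacity set, and conclude by the uniform limit theorem. The only cosmetic additions are your explicit note that $\widetilde{w_i}=w_i$ for continuous $w_i$ and the final outer-regularity step to make $G$ open, both of which the paper leaves implicit here (the latter being handled globally in Remark~\ref{rmk:capacities etc}).
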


\begin{proof}
By Lemma~\ref{lem:density of continuous functions}, we can pick a sequence $w_i\in \BV(X)\cap C(X)$ with $w_i\to u$ in $\BV(X)$.
By Lemma~\ref{lem:from BV to uniform convergence} 
there exists, by passing to a subsequence if necessary, 
a set $F\subset X$ with $\capa_1(F)<\eps$ such that $w_i\to \widetilde{u}$ 
uniformly in $X\setminus F$. Thus $\widetilde{u}|_{X\setminus F}$ is continuous.
\end{proof}

\begin{corollary}\label{cor:quasicontinuity without jump part for open set}
Let $\Omega\subset X$ be an open set and $u\in\BV(\Omega)$ with $\mathcal H(S_u)=0$. 
Let $\eps>0$. Then there exists $G\subset \Omega$ with $\capa_1(G)<\eps$ 
such that $\widetilde{u}|_{\Omega\setminus G}$ is continuous.
\end{corollary}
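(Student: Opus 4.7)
The plan is to reduce to the global result (Proposition~\ref{prop:quasicontinuity without jump part}) via a localization argument. First I would exhaust $\Omega$ by relatively compact open subsets: since $X$ is proper, for a fixed $x_0\in X$ the sets
\[
\Omega_i:=\{x\in\Omega:\dist(x,X\setminus\Omega)>1/i\}\cap B(x_0,i),\qquad i\in\N,
\]
form an increasing sequence with $\overline{\Omega_i}\Subset\Omega_{i+1}$ and $\bigcup_i\Omega_i=\Omega$. For each $i$ pick a Lipschitz cutoff $\eta_i\colon X\to[0,1]$ that equals $1$ on $\Omega_i$ and has $\supp\eta_i\subset\Omega_{i+1}$; e.g.\ $\eta_i(x):=\max\{0,\,1-\dist(x,\overline{\Omega_i})/\delta_i\}$ with $\delta_i\in(0,\dist(\overline{\Omega_i},X\setminus\Omega_{i+1}))$.

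Next I would define $f_i:=\eta_i u$ on $\Omega$, extended by zero to $X\setminus\Omega$, and verify $f_i\in\BV(X)$ with $S_{f_i}\subset S_u$. The Leibniz rule for Lipschitz-times-BV products gives $\eta_i u\in\BV(\Omega)$ with $\Vert D(\eta_i u)\Vert(\Omega)\le\Vert u\Vert_{L^1(\Omega)}\Lip(\eta_i)+\Vert Du\Vert(\Omega)$. Because $\supp\eta_i$ is a compact subset of $\Omega$, the function $f_i$ vanishes on the open set $X\setminus\supp\eta_i\supset X\setminus\Omega$, so the zero extension lies in $\BV(X)$ with $\Vert Df_i\Vert(X)=\Vert D(\eta_i u)\Vert(\Omega)$. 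Since $\eta_i$ is continuous and $0\le\eta_i\le 1$, one has $f_i^{\wedge}=\eta_i u^{\wedge}$ and $f_i^{\vee}=\eta_i u^{\vee}$ on $\Omega$, while $f_i^{\wedge}=f_i^{\vee}=0$ outside $\supp\eta_i$. Hence $S_{f_i}\subset\{\eta_i>0\}\cap S_u\subset S_u$, so $\mathcal H(S_{f_i})=0$.

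I would then apply Proposition~\ref{prop:quasicontinuity without jump part} to each $f_i$ with tolerance $\eps/2^{i+1}$, yielding sets $G_i\subset X$ with $\capa_1(G_i)<\eps/2^{i+1}$ and $\widetilde{f_i}|_{X\setminus G_i}$ continuous. Set $G:=\Omega\cap\bigcup_{i=1}^{\infty}G_i$; by countable subadditivity of $\capa_1$ (cf.\ Remark~\ref{rmk:capacities etc}), $\capa_1(G)<\eps$. To finish, fix $x\in\Omega\setminus G$ and pick $i$ with $x\in\Omega_i$. Since $\Omega_i$ is open and $\eta_i\equiv 1$ on $\Omega_i$, there is a neighborhood $V\subset\Omega_i$ of $x$ on which $f_i\equiv u$, so $\widetilde{f_i}\equiv\widetilde{u}$ on $V$. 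As $x\in X\setminus G_i$ and $\widetilde{f_i}|_{X\setminus G_i}$ is continuous at $x$, continuity of $\widetilde{u}|_{\Omega\setminus G}$ at $x$ follows.

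The only real obstacle is ensuring that the zero extension of $\eta_i u$ stays in $\BV(X)$ and introduces no new jump on $\partial\Omega$; both issues are resolved by the compact containment $\supp\eta_i\Subset\Omega$, which forces $f_i$ to vanish identically on an open neighborhood of every point of $X\setminus\Omega$, thereby preventing any contribution to the variation measure or to $S_{f_i}$ from $\partial\Omega$.
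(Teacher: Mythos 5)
Your proof is correct and follows essentially the same route as the paper: exhaust $\Omega$ by sets $\Omega_i$ at positive distance from $X\setminus\Omega$, multiply by Lipschitz cutoffs $\eta_i$ supported well inside $\Omega$, extend by zero to get $\BV(X)$ functions, apply Proposition~\ref{prop:quasicontinuity without jump part} to each with tolerance $2^{-i}\eps$, and take the union of the exceptional sets. You are somewhat more careful than the paper in noting that $\mathcal H(S_{\eta_i u})=0$ needs to be checked; the identity $f_i^\wedge=\eta_i u^\wedge$ can fail at points where $\eta_i$ vanishes and $u^\wedge$ or $u^\vee$ is infinite, but that set is $\mathcal H$-null by~\cite[Lemma~3.2]{KKST2}, so the conclusion $\mathcal H(S_{f_i})=0$ stands.
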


\begin{proof}
We denote
\[
\Omega_\delta:=\{x\in\Omega:\, \dist(x,X\setminus\Omega)>\delta\},\qquad \delta>0.
\]
For $\delta>0$, take $\eta_{\delta}\in \Lip(X)$ with $0\le \eta_{\delta}\le 1$, $ \eta_{\delta}=1$ in $\Omega_{\delta}$, 
and $\eta_{\delta}=0$ outside $\Omega_{\delta/2}$. Then clearly $u\eta_{\delta}\in \BV(X)$. 
By the previous proposition, for each $i\in\N$ there exists $G_i\subset \Omega$ with 
$\capa_1(G_i)< 2^{-i}\eps$ such that $\widetilde{u\eta_{1/i}}|_{X\setminus G_i}$ is continuous, and 
clearly $\widetilde{u\eta_{1/i}}\equiv \widetilde{u}\eta_{1/i}=\widetilde{u}$ in $\Omega_{1/i}$. Define 
$G:=\bigcup_{i\in\N} G_i$. Then for each $i\in\N$, $\widetilde{u}|_{\Omega_{1/i}\setminus G}$ is continuous, 
whence $\widetilde{u}|_{\Omega\setminus G}$ is continuous, and $\capa_1(G)<\eps$.
\end{proof}

Now we can prove Theorem~\ref{thm:main result} for points outside the jump set of a BV function.

\begin{proposition}\label{prop:quasicontinuity for BV}
Let $u\in\BV(X)$ and let $\eps>0$. Then there exists $G\subset X$ with
$\capa_1(G)<\eps$ such that
whenever $y_k\to x$ with $y_k\in X\setminus G$ and $x\in X\setminus (G\cup S_u)$, then 
$u^{\wedge}(y_k)\to \widetilde{u}(x)$ and $u^{\vee}(y_k)\to \widetilde{u}(x)$.
\end{proposition}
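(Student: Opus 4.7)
My plan is to approximate $u$ in $\BV(X)$ by a sequence $w_i$ whose precise representatives $\widetilde{w_i}$ are continuous on $X$, and then invoke Lemma \ref{lem:from BV to uniform convergence} to extract uniform convergence of the approximate limits outside a set of small $1$-capacity. First, applying Lusin's theorem together with Egorov's theorem to the Borel function $\widetilde u$ and the Lebesgue-type convergence in \eqref{eq:uniform Lebesgue point convergence} (which holds $\mathcal H$-a.e.\ by \cite[Theorem~3.5]{KKST2}, as cited in the proof of Lemma \ref{lem:density of continuous functions}), with respect to a Radon measure carrying the mass of $\|Du\|$, I would produce, for each $i\in\N$, a compact set $H_i\subset X$ such that $\widetilde u|_{H_i}$ is continuous, the convergence in \eqref{eq:uniform Lebesgue point convergence} is locally uniform on $H_i$, and $\|Du\|(X\setminus H_i)<1/i$.

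Next, I would apply Proposition \ref{prop:uniform convergence and continuity} with $\Omega=X$, $H=H_i$, and $\kappa=1/i$, yielding $w_i\in\BV(X)$ such that $\widetilde{w_i}$ is continuous on $X$, $\widetilde{w_i}=\widetilde u$ on $H_i$, and (via Corollary \ref{cor:mollifying in an open set}) $\|D(w_i-u)\|(X)\le C\|Du\|(X\setminus H_i)<C/i$, so $w_i\to u$ in $\BV(X)$. Lemma \ref{lem:from BV to uniform convergence}, after passing to a subsequence, then gives a set $F\subset X$ with $\capa_1(F)<\eps$ such that $w_i^{\wedge}\to u^{\wedge}$ and $w_i^{\vee}\to u^{\vee}$ uniformly on $X\setminus F$. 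Since $\widetilde{w_i}$ is continuous on $X$, one has $w_i^{\wedge}=w_i^{\vee}=\widetilde{w_i}$ pointwise, so the uniform convergence forces $u^{\wedge}=u^{\vee}$ on $X\setminus F$; hence $S_u\subset F$, and $\widetilde u|_{X\setminus F}$ is a uniform limit of continuous functions, thus continuous. Setting $G:=F$, for any $x\in X\setminus(G\cup S_u)$ and $y_k\to x$ with $y_k\in X\setminus G$, we have $y_k\notin S_u$, so $u^{\wedge}(y_k)=u^{\vee}(y_k)=\widetilde u(y_k)\to\widetilde u(x)$ by continuity.

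The main technical obstacle is the construction of $H_i$ satisfying both the continuity of $\widetilde u|_{H_i}$ and the uniform Lebesgue-type convergence, while simultaneously controlling $\|Du\|(X\setminus H_i)$. The Lebesgue-type convergence with the symmetric value $\widetilde u=(u^{\wedge}+u^{\vee})/2$ need not hold at points of $S_u$, which forces $H_i$ to lie essentially in $X\setminus S_u$ and makes $\|Du\|(X\setminus H_i)\ge \|Du\|^j(X)$; moreover, a naive $\BV$-convergence $w_i\to u$ with continuous $\widetilde{w_i}$ would, via Remark \ref{rem:JumpSBV}, force $\mathcal H(S_u)=0$, which fails in general. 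Navigating this tension---by a delicate Lusin--Egorov selection that captures points of $S_u$ at which the symmetric-average Lebesgue property does hold, combined with an adapted application of Proposition \ref{prop:uniform convergence and continuity} on the enriched $H_i$---is the heart of the argument.
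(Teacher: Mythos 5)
Your plan hits a wall that you yourself identify at the end, and the tension you describe is not merely a technical subtlety to be navigated but a genuine obstruction to the whole approach. The Lusin--Egorov step in your first paragraph asks for compact $H_i$ with $\widetilde u|_{H_i}$ continuous, the symmetric-average Lebesgue property~\eqref{eq:uniform Lebesgue point convergence} uniform on $H_i$, \emph{and} $\|Du\|(X\setminus H_i)<1/i$. But at every $x\in S_u$ we have
\[
\liminf_{r\to 0^+}\vint{B(x,r)}|u-\widetilde u(x)|\,d\mu \ge \gamma\,\frac{u^\vee(x)-u^\wedge(x)}{2}>0,
\]
because a $\gamma$-fraction of the ball carries values near $u^\wedge(x)$ and a $\gamma$-fraction carries values near $u^\vee(x)$, while $\widetilde u(x)$ sits at the midpoint. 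So there are \emph{no} points of $S_u$ at which the symmetric-average Lebesgue property holds, and any $H_i$ satisfying the second requirement lies (up to an $\mathcal H$-null set) in $X\setminus S_u$. Consequently $\|Du\|(X\setminus H_i)\ge\|Du\|^j(X)$, which does not tend to $0$ unless $u$ has no jump part. Equivalently, as you observe via Remark~\ref{rem:JumpSBV}, producing $w_i\in \BV(X)\cap C(X)$ with $w_i\to u$ in $\BV(X)$ would force $\mathcal H(S_u)=0$, contradicting generality. This is not a refinable step; the strategy of mollifying so that $\widetilde{w_i}$ is continuous on \emph{all} of $X$ cannot succeed when $\|Du\|^j(X)>0$.

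The paper's proof sidesteps this by \emph{not} eliminating the jump set of the approximants. Instead it chooses compact $H_i\subset S_u$ with $\|Du\|(S_u\setminus H_i)<1/i$, encloses $S_u\setminus H_i$ in a small open set $U_i$ with $\|Du\|(U_i)<1/i$, and mollifies $u$ only in $U_i$ (via Corollary~\ref{cor:mollifying in an open set}). The resulting $w_i$ equals $u$ outside $U_i$, satisfies $w_i\to u$ in $\BV(X)$, has $\widetilde{w_i}$ continuous in $U_i$, and its jump set is contained in $H_i$ up to an $\mathcal H$-null set. One then applies Corollary~\ref{cor:quasicontinuity without jump part for open set} on the open set $X\setminus H_i$ to get continuity of $\widetilde{w_i}$ away from $H_i\cup G_i$. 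Finally, the compactness of $H_i\subset S_u$ is exactly what lets the argument close: for $x\notin S_u$ and $y_k\to x$, eventually $y_k\notin H_i$, so $\widetilde{w_i}(y_k)=w_i^\wedge(y_k)=w_i^\vee(y_k)$, and the triangle-inequality estimate with the uniform convergence from Lemma~\ref{lem:from BV to uniform convergence} finishes. The conceptual shift you are missing is to keep a compact core of the jump set inside the approximants rather than trying to mollify it away.
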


Note that the conclusion of the proposition is stronger than saying that $\widetilde{u}(y_k)\to\widetilde{u}(x)$.

\begin{proof}
Since $\Vert Du\Vert$ is a Radon measure and $S_u$ is a Borel set, we can find compact sets $H_i\subset S_u$ with 
$\Vert Du\Vert(S_u\setminus H_i)<1/i$ for each $i\in\N$.
For each $i\in\N$, take an open set
$U_i\subset X$ with $U_i\supset S_u\setminus H_i$ and $\Vert Du\Vert(U_i)<1/i$, and apply
Corollary~\ref{cor:mollifying in an open set} with $U= U_i$ and $\kappa=1/i$ to obtain a function 
$w_i\in\BV(X)$ with $w_i=u$ in $X\setminus U_i$.
We have $w_i\to u$ in $L^1(X)$, and
by~\eqref{eq:variation measure for mollified function},
\begin{align*}
\Vert D(w_i-u)\Vert(X)
&=\Vert D(w_i-u)\Vert(U_i)\\
&\le \Vert Dw_i\Vert(U_i)+\Vert Du\Vert(U_i)\\
&\le C\Vert Du\Vert(U_i) \le C/i
\end{align*}
for each $i\in\N$, so in fact $w_i\to u$ in $\BV(X)$. By Corollary~\ref{cor:mollifying in an open set},
for each $i\in\N$, $\widetilde{w_i}$ is continuous in $U_i$ and hence has no jump part there; therefore 
by~\eqref{eq:variation measure for mollified function},
\begin{align*}
\Vert Dw_i\Vert^j(X\setminus H_i)
&= \Vert Dw_i\Vert^j(U_i\setminus H_i)+\Vert Du\Vert^j(X \setminus (H_i\cup U_i))\\
&\le \Vert Dw_i\Vert^j(U_i)+\Vert Du\Vert^j(X \setminus S_u)=0,
\end{align*}
so the jump set of $w_i$ satisfies $\mathcal H(S_{w_i}\setminus H_i)=0$. Thus 
by Corollary~\ref{cor:quasicontinuity without jump part for open set} applied to the open set that is $X\setminus H_i$, 
there exists $G_i\subset X$ with $\capa_1(G_i)< 2^{-i-1} \eps$ such that 
$\widetilde{w_i}|_{X\setminus (H_i\cup G_i)}$ is continuous. Since also
$\capa_1(S_{w_i}\setminus H_i)=0$, we can assume that 
$G_i\supset S_{w_i}\setminus H_i$, 
so that $\widetilde{w_i}=w_i^{\wedge}=w_i^{\vee}$ in 
$X\setminus (H_i\cup G_i)$. Let ${G}_\infty:=\bigcup_{i\in\N}G_i$. Since $w_i\to u$ in 
$\BV(X)$, by Lemma~\ref{lem:from BV to uniform convergence} and by picking a subsequence 
if necessary, there exists $F\subset X$ with $\capa_1(F)<\eps/2$ such that $w_i^{\wedge}\to u^{\wedge}$ 
and $w_i^{\vee}\to u^{\vee}$ uniformly in $X\setminus F$.
For $G:=F\cup {G}_\infty$, clearly $\capa_1(G)<\eps$.

Finally, let $y_k\to x$ with $y_k\in X\setminus G$ and $x\in X\setminus (S_u\cup G)$. 
Note that since each $H_i\subset S_u$ is compact, for each $i\in\N$ we necessarily have 
$y_k\in X\setminus H_i$ for large enough $k$, so for these indices, 
$\widetilde{w_i}(y_k)=w_i^{\wedge}(y_k)=w_i^{\vee}(y_k)$. For some sequence of nonnegative 
numbers $\alpha_i\to 0$, we have
\[
|w_i^{\wedge}(z)-u^{\wedge}(z)|\le \alpha_i\qquad
\textrm{and}\qquad |w_i^{\vee}(z)-u^{\vee}(z)|\le \alpha_i
\]
for all $z\in X\setminus F$, $i\in\N$. Thus
\begin{align*}
&\limsup_{k\to\infty}|u^{\wedge}(y_k)-\widetilde{u}(x)|\\
&\ \ \ \  \le \limsup_{k\to\infty} \,(|u^{\wedge}(y_k)-w_i^{\wedge}(y_k)|+|w_i^{\wedge}(y_k)-\widetilde{w_i}(x)|+
|\widetilde{w_i}(x)-\widetilde{u}(x)|)\\
&\ \ \ \ \le\limsup_{k\to\infty} \,(\alpha_i+|w_i^{\wedge}(y_k)-\widetilde{w_i}(x)|+\alpha_i)\\
&\ \ \ \ =\limsup_{k\to\infty} |\widetilde{w_i}(y_k)-\widetilde{w_i}(x)|+2\alpha_i\\
&\ \ \ \ =2\alpha_i
\end{align*}
by the continuity of $\widetilde{w_i}|_{X\setminus (H_i\cup G_i)}$. Letting $i\to\infty$ completes the proof
for $u^\wedge$. For $u^{\vee}$, the proof is the similar. This
completes the proof of the proposition.
\end{proof}

\section{Proof of Theorem~\ref{thm:main result}: within the jump set} 

In this section we complete the proof of Theorem~\ref{thm:main result}. 
First we consider a generalization of the Lebesgue differentiation theorem for the jump set of a $\BV$ function.
Recall the definition of the number $Q>0$ from~\eqref{eq:definition of Q}. We know from \cite[Theorem~4.3]{L} that for 
$u\in \BV(X)$ and $\mathcal H$-a.e. $x\in S_u$, there exist $t_1,t_2\in (u^{\wedge}(x),u^{\vee}(x))$ such that
\[
\lim_{r\to 0^+}\,\vint{B(x,r)\cap \{u< t_1\}}|u-u^{\wedge}(x)|^{Q/(Q-1)}\,d\mu=0
\]
and
\[
\lim_{r\to 0^+}\,\vint{B(x,r)\cap \{u>t_2\}}|u-u^{\vee}(x)|^{Q/(Q-1)}\,d\mu=0.
\]
We cannot in general pick $t_1,t_2$ freely from the interval $(u^{\wedge}(x),u^{\vee}(x))$, as we can in the Euclidean setting, 
as demonstrated by the following example.

\begin{example}\label{ex:one dimensional space}
Consider the one-dimensional space
\[
X:=\{(x_1,x_2) \in \R^2:\,x_1=0\text{ or }x_2=0\}
\]
consisting of the two coordinate axes.
Equip this space with the Euclidean metric inherited from $\R^2$, and the 1-dimensional Hausdorff measure. 
This measure is doubling and supports a $(1,1)$-Poincar\'e inequality.
Let 
\[
u:=\chi_{\{x_1>0\}}+2\chi_{\{x_2>0\}}+3\chi_{\{x_1<0\}}+4\chi_{\{x_2<0\}}.
\]
For brevity, denote the origin $(0,0)$ by $0$. Now $S_u=\{0\}$ with $\mathcal H(\{0\})=2$, and
$(u^{\wedge}(0),u^{\vee}(0))=(1,4)$. However, 
we cannot choose $t_1$ to be larger than 2, nor $t_2$ to be smaller than 3. This demonstrates that in 
a metric space, a BV function can, in a measure theoretic sense, take more than 2 values all along its jump set $S_u$.

Higher-dimensional example spaces can be obtained by simply taking Cartesian products of $X$ with e.g. $[0,1]$.
\end{example}

\begin{example}
Closely related to this issue are the~\emph{locality conditions} discussed in~\cite{AMP} and \cite{L}.
We say that $X$ supports the~\emph{strong locality condition} if for every pair of 
sets $E_1\subset E_2\subset X$ of finite perimeter, we have
\[
\lim_{r\to 0^+}\frac{\mu(B(x,r)\cap(E_2\setminus E_1))}{\mu(B(x,r))}=0
\]
for $\mathcal H$-a.e. $x\in\partial^*E_1\cap\partial^*E_2$.
Following \cite{AMP}, we also say that $X$ supports the \emph{locality condition} if for every pair of sets $E_1\subset E_2\subset X$ of 
finite perimeter, we have $\theta_{E_1}(x)=\theta_{E_2}(x)$ for $\mathcal H$-a.e. 
$x\in\partial^*E_1\cap\partial^*E_2$ (the function $\theta_{E}$ was defined in~\eqref{eq:def of theta}).
In \cite[Proposition 6.2]{AMP}, the authors show that the strong locality condition implies the locality condition.
In \cite[Theorem~4.10]{L} it was shown that if the space supports the strong locality condition, then every pair
$t_1,t_2$ from the interval $(u^{\wedge}(x),u^{\vee}(x))$
satisfies the two equations from the beginning of
this section. However, either locality condition can fail
in a metric space, even one with a doubling measure supporting a Poincar\'e inequality. Consider the space from 
Example~\ref{ex:one dimensional space}. The sets
\[
E_1:=\{x_1>0\},\qquad E_2:=\{x_1>0\}\cup\{x_2>0\}
\]
are easily seen to be of finite perimeter, and $\partial^*E_1=\partial^*E_2=\{0\}$, that is, the measure theoretic boundaries 
only contain the origin. We have $\mathcal H(\{0\})=2$. The strong locality condition fails at the origin, since
\[
\lim_{r\to 0^+}\frac{\mu(B(0,r)\cap (E_2\setminus E_1))}{\mu(B(0,r))}
  =\lim_{r\to 0^+}\frac{\mu(B(0,r)\cap \{x_2>0\})}{\mu(B(0,r))}=\frac{1}{4}.
\]

In addition, we see that $P(E_1,X)=1$, since we can take approximating Lipschitz functions with support in $\{x_1>0\}$. But this 
does not work for $E_2$, and so we get $P(E_2,X)=2$. On the other hand, obviously 
$\mathcal H(\partial^*E_1)=\mathcal H(\partial^*E_2)$, because both sets consist of the same point. Thus 
$\theta_{E_1}(0)=1/2$ but $\theta_{E_2}(0)=1$, and the locality condition fails as well.
\end{example}

Recall the definition of $\gamma>0$ from~\eqref{eq:density of E}, the definition $n=\lfloor 1/\gamma\rfloor$, 
and the definition of the functions $u^l$ (defined also below)
for $u\in\BV(X)$ from~\eqref{eq:definition of the n limits}.
Denote by $n(x)$ the number of distinct values $u^l(x)$, $l\in\{1,\ldots,n\}$.
Also, for 
$u\in\BV(X)$, $x\in X$, and $\delta>0$, we denote
\begin{equation}\label{eq:definition of Als}
\begin{split}
& A_{l}^{\delta}(x):=[u^{l-1}(x)+\delta, u^{l+1}(x)-\delta],\qquad l=2,\ldots,n(x)-1,\\
& A_{1}^{\delta}(x):=(-\infty, u^{2}(x)-\delta],\qquad A_{n(x)}^{\delta}(x):=[u^{n(x)-1}(x)+\delta,\infty).
\end{split}
\end{equation}

\begin{theorem}\label{thm:behavior in jump set}
Let $u\in \BV(X)$.
Then for $\mathcal H$-a.e. $x\in S_u$, the following two properties hold:
$-\infty<u^1(x)<\ldots< u^{n(x)}(x)<\infty$, and
\begin{equation}\label{eq:behavior in jump set 3}
\lim_{r\to 0^+}\,\vint{B(x,r)\cap \{u\in A_l^{\delta}(x)\}}|u-u^{l}(x)|^{Q/(Q-1)}\,d\mu=0
\end{equation}
for each $l=1,\ldots,n(x)$, and every
\[
0<\delta\le\min\{u^{j+1}(x)-u^{j}(x),\ j=1,\ldots,n(x)-1\}/2.
\]
For $l=2,\ldots,n(x)-1$, we can in fact replace $Q/(Q-1)$ with any $q>0$.
\end{theorem}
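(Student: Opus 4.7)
The plan is to divide the proof into two parts depending on whether $l$ lies in $\{1,n(x)\}$ or in the interior $\{2,\dots,n(x)-1\}$, and to base the whole argument on three ingredients: \cite[Theorem~4.3]{L} for the two extreme indices, the density bounds of \cite[Theorem~5.4]{A1} applied to every superlevel set $\{u>s\}$ simultaneously (through a countable-$s$ argument on the rationals), and the recursive definition of $u^l$. As a preliminary step I would restrict to the full $\mathcal H$-measure subset of $S_u$ where (i) $u^\wedge(x)$ and $u^\vee(x)$ are finite, as in \cite[Lemma~3.2]{KKST2}, (ii) $x\in\Sigma_\gamma\{u>s\}$ whenever $s\in\Q$ and $x\in\partial^*\{u>s\}$, and (iii) \cite[Theorem~4.3]{L} applies. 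Finiteness and the strict chain $-\infty<u^1(x)<\dots<u^{n(x)}(x)<\infty$ follow on this set from the sandwich $u^\wedge\le u^l\le u^\vee$ and the definition of $n(x)$.

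The heart of the argument is to control the superlevel-set densities. Using (ii) plus monotonicity in $s$, we have $\gamma\le\liminf_r\mu(B(x,r)\cap\{u>s\})/\mu(B(x,r))\le\limsup_r\mu(B(x,r)\cap\{u>s\})/\mu(B(x,r))\le 1-\gamma$ for every $s\in(u^\wedge(x),u^\vee(x))$. Moreover, for $s_1<s_2$ both lying in a single open interval $(u^l(x),u^{l+1}(x))$, the definition of $u^{l+1}$ forces $\mu$-density zero at $x$ for $\{s_1<u<s_2\}$, so $\liminf=\limsup$ and the limit
\[
\alpha_l:=\lim_{r\to 0^+}\frac{\mu(B(x,r)\cap\{u>s\})}{\mu(B(x,r))},\qquad s\in(u^l(x),u^{l+1}(x)),
\]
exists independently of $s$ in that interval, with $\alpha_0=1$, $\alpha_{n(x)}=0$, and $\alpha_l\in[\gamma,1-\gamma]$ for $1\le l\le n(x)-1$. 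I would then establish the strict decrease $\alpha_{l-1}>\alpha_l$ for every $l$: the boundary cases $l=1,n(x)$ are automatic, and for interior $l$ the identity $\alpha_{l-1}=\alpha_l$ would yield $\lim_r\mu(B(x,r)\cap\{u^{l-1}(x)+\delta<u<t\})/\mu(B(x,r))=0$ for all small $\delta>0$ and all $t\in(u^l(x),u^{l+1}(x))$, contradicting the supremum definition in~\eqref{eq:definition of the n limits} of $u^l(x)$. This contradiction step is the main technical obstacle; without the positive gap $\alpha_{l-1}-\alpha_l$ there is no way to normalize by $\mu(F_r)$ in what follows.

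For interior $l\in\{2,\dots,n(x)-1\}$ the theorem is now almost immediate. Setting $F_r=B(x,r)\cap\{u\in A_l^\delta(x)\}$, the same density calculation gives $\mu(F_r)/\mu(B(x,r))\to\alpha_{l-1}-\alpha_l>0$, while for every $\eta>0$ the truncation $F_r\cap\{|u-u^l(x)|>\eta\}$ is contained in $\{u^{l-1}(x)+\delta<u<u^l(x)-\eta\}\cup\{u^l(x)+\eta<u<u^{l+1}(x)-\delta\}$, both of which have $\mu$-density zero at $x$ by the definitions of $u^l$ and $u^{l+1}$. Since $|u-u^l(x)|$ is bounded on $A_l^\delta(x)$ by $M:=u^{l+1}(x)-u^{l-1}(x)<\infty$, we get for any $q>0$
\[
\vint_{F_r}|u-u^l(x)|^q\,d\mu\le\eta^q+M^q\,\frac{\mu(F_r\cap\{|u-u^l(x)|>\eta\})}{\mu(F_r)}=\eta^q+o(1),
\]
and letting $\eta\to 0^+$ yields~\eqref{eq:behavior in jump set 3} for every $q>0$, which also gives the last assertion of the theorem.

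For the boundary indices $l=1$ and $l=n(x)$ I would combine the same density bookkeeping with \cite[Theorem~4.3]{L}. That result provides $t_1\in(u^\wedge(x),u^\vee(x))$ with $\vint_{B(x,r)\cap\{u<t_1\}}|u-u^1(x)|^{Q/(Q-1)}\,d\mu\to 0$; a direct inspection shows that such a $t_1$ must satisfy $t_1\le u^2(x)$, since otherwise $\{u<t_1\}$ would carry positive density around $u^2(x)$ where $|u-u^1(x)|\ge u^2(x)-u^1(x)>0$, contradicting the vanishing of the average. The symmetric difference $\{u<t_1\}\triangle\{u\le u^2(x)-\delta\}$ then sits inside the open strip $(u^1(x),u^2(x))$, which has $\mu$-density zero at $x$ by the definition of $u^2(x)$, and on it $|u-u^1(x)|$ is bounded. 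Combined with $\mu(B(x,r)\cap\{u\le u^2(x)-\delta\})/\mu(B(x,r))\to 1-\alpha_1>0$, which lets us renormalize from $\mu(\{u<t_1\}\cap B(x,r))$ to $\mu(F_r)$, the $L^{Q/(Q-1)}$ decay transfers from \cite{L} onto $F_r$ and gives~\eqref{eq:behavior in jump set 3} for $l=1$. The fully analogous argument handles $l=n(x)$.
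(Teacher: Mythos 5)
Your plan follows the broad outline of the paper's argument: restrict to a full-measure set via the countable-level-set and $\Sigma_\gamma$-density technology from \cite[Theorem~5.4]{A1}, exploit the inductive definition of $u^l$, treat interior indices $l$ by a boundedness-plus-density-zero argument, and defer the extreme indices $l\in\{1,n(x)\}$ to \cite[Theorem~4.3]{L}. However, there is a genuine gap at the center of your argument. You assert that the limit $\alpha_l=\lim_{r\to 0^+}\mu(B(x,r)\cap\{u>s\})/\mu(B(x,r))$ exists for $s\in(u^l(x),u^{l+1}(x))$. This does not follow from what you have established: the bounds $\gamma\le\liminf\le\limsup\le 1-\gamma$ from your condition (ii) say nothing about convergence, and the density-zero of a strip $\{s_1<u<s_2\}$ only shows that the $\liminf$ (respectively the $\limsup$) of the density of $\{u>s\}$ is independent of $s$ in the interval; it does not force $\liminf=\limsup$. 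In a general PI space, at an $\mathcal H$-typical point of $\partial^*E$ the density of $E$ is only known to lie in $[\gamma,1-\gamma]$, not to converge, so you cannot rely on $\mu(F_r)/\mu(B(x,r))\to\alpha_{l-1}-\alpha_l>0$ to carry out the renormalization in the interior case, nor the analogous $\to 1-\alpha_1$ for $l=1$.

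The paper circumvents this by introducing, besides the bad set $N=\bigcup_{t\in T}\partial^*\{u>t\}\setminus\Sigma_\gamma\{u>t\}$ that you use in (ii), a second $\mathcal H$-null set $\widetilde{N}=\bigcup_{s<t}\partial^*(\{u>s\}\setminus\{u>t\})\setminus\Sigma_\gamma(\{u>s\}\setminus\{u>t\})$ built from the \emph{difference} sets. Away from $\widetilde{N}$, each strip $\{s<u\le t\}$ with positive upper density at $x$ automatically has lower density $\ge\gamma$ there. Since the definition of $u^l$ guarantees $\{u^l(x)-\eps<u<u^l(x)+\eps\}$ has positive upper density, one gets lower density $\ge\gamma$ directly, which both yields $\liminf_r\mu(F_r)/\mu(B(x,r))\ge\gamma>0$ (the normalization your interior argument needs) and drives the strict chain $u^1(x)<\dots<u^{n(x)}(x)$. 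That strict chain is not, as you claim, immediate from ``the sandwich and the definition of $n(x)$'': the inductive formula allows $u^{l-1}(x)=u^l(x)<u^\vee(x)$ a priori, which then propagates to $u^{l}(x)=\dots=u^{n-1}(x)<u^n(x)$ and makes $u^{n(x)-1}(x)=u^{n(x)}(x)$; ruling this out for $\mathcal H$-a.e.\ $x$ is exactly what the density argument with $\widetilde{N}$ provides. If you add $\widetilde{N}$ to your exceptional set and replace the existence of the limits $\alpha_l$ with the lower density bound $\ge\gamma$ for the relevant strips, the rest of your proof goes through and essentially coincides with the paper's.
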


\begin{proof}
This is a generalization of results in~\cite{L}.  Denote, for brevity, the super-level sets of $u$ by $E_t:=\{u>t\}$, $t\in\R$.
By the coarea formula~\eqref{eq:coarea}, there is a countable dense set $T\subset \R$ such that for every $t\in T$, 
the set $E_t$ is of finite perimeter. Let
\begin{equation*}
N:=\bigcup_{t\in T} \partial^*E_t\setminus \Sigma_{\gamma}E_t
\end{equation*}
and
\begin{equation*}
\widetilde{N}:=\bigcup_{s,t\in T:\,s<t} \partial^*(E_s\setminus E_t)\setminus \Sigma_{\gamma}(E_s\setminus E_t).
\end{equation*}
Recalling~\eqref{eq:density of E}, and since the sets $E_s\setminus E_t$, $s,t\in T$, are also of finite perimeter 
by~\cite[Proposition 4.7]{M}, we have $\mathcal H(N\cup\widetilde{N})=0$.

Fix $x\in S_u\setminus (N\cup \widetilde{N})$. By discarding another $\mathcal H$-negligible set, we can assume that 
$u^{\wedge}(x),u^{\vee}(x)$ are finite, see~\cite[Lemma 3.2]{KKST2}.  Set $u^1(x)=u^\wedge(x)$, and
define inductively for
$l=2,\ldots, n-1=\lfloor 1/\gamma\rfloor-1$
\begin{equation*}
u^{l}(x):=\sup\left\{t\in\overline{\R}:\,\lim_{r\to 0^+}\frac{\mu(B(x,r)\cap \{u^{l-1}(x)+\eps<u<t\})}{\mu(B(x,r))}=0\ \ \forall\, \eps>0\right\}
\end{equation*}
provided $u^{l-1}(x)<u^\vee(x)$, and otherwise set $u^l(x):=u^\vee(x)$. We also set $u^n(x):=u^\vee(x)$.
Fix $l$ and suppose that $u^l(x)<u^\vee(x)$. We can find $t_i\in T$ with $u^l(x)<t_i<u^\vee(x)$ for
each $i\in\N$ such that $t_i\searrow u^l(x)$ as $i\to\infty$. Then 
whenever $\partial^*\{t_{i+1}\le u<t_i\}$ has density $1$ at $x$ or $x\in\partial^*\{t_{i+1}\le u<t_i\}$, we must have
\[
\liminf_{r\to0^+}\frac{\mu(\{t_{i+1}\le u<t_i\}\cap B(x,r))}{\mu(B(x,r))}\ge \gamma.
\]
By the choice of $n$, this can happen only for at most $n$ number of indices $i$ (because the sets
$\{t_{i+1}\le u<t_i\}$ are pairwise disjoint). It follows
that for sufficiently large $i$, the sets $\{t_{i+1}\le u< t_i\}$ have density $0$ at $x$.
Thus if $u^l(x)<u^\vee(x)$, necessarily $u^{l+1}(x)>u^l(x)$. Thus $u^{n(x)}(x)=u^n(x)$.

By the definition of the functions $u^l$, we have
\[
\limsup_{r\to 0^+}\frac{\mu(B(x,r)\cap \{u^{l}(x)-\eps<u<u^{l}(x)+\eps\})}{\mu(B(x,r))}>0
\]
for every $\eps>0$ and all $l=1,2,\ldots,n$. 
Since $x\notin \widetilde{N}$, we have in fact
\begin{equation}\label{eq:density of difference of levels sets}
\liminf_{r\to 0^+}\frac{\mu(B(x,r)\cap \{u^{l}(x)-\eps<u<u^{l}(x)+\eps\})}
{\mu(B(x,r))}\ge \gamma.
\end{equation}
Now, if for
\[
\alpha:=\sup\left\{t\in\overline{\R}:\,\lim_{r\to 0^+}\frac{\mu(B(x,r)\cap \{u^{n(x)-1}(x)+\eps<u<t\})}{\mu(B(x,r))}=0\ \ \forall\, \eps>0\right\}
\]
we have $\alpha<u^{n(x)}(x)=u^n(x)$, then necessarily $n(x)=n$, and as above, we can conclude that for every $\eps>0$, the set $\{\alpha-\eps<u<\alpha+\eps\}$ has lower density at least $\gamma$ at $x$. Moreover, as the sets $\{\alpha-\eps<u<\alpha+\eps\}$ and $\{u^{l}(x)-\eps<u<u^{l}(x)+\eps\}$, $l=1,\ldots,n$ are all disjoint for small enough $\eps$, this contradicts the definition $n=\lfloor 1/\gamma \rfloor$. Thus $\alpha=u^{n(x)}(x)$.

For $l=1,\ldots,n(x)$, we note that by the definition of the numbers $u^{l}(x)$ and the fact that $\alpha=u^{n(x)}(x)$, the set
\[
\{u\in A_l^{\delta}(x)\}\setminus \{u^{l}(x)-\eps<u<u^{l}(x)+\eps\}
\]
has density 0 at $x$ for any $\eps>0$, and this together with~\eqref{eq:density of difference of levels sets} 
implies for any $l=2,\ldots,n(x)-1$ and $q>0$ that
\[
\lim_{r\to 0^+}\frac{1}{\mu(B(x,r))}\int_{B(x,r)\cap \{u\in A_l^{\delta}(x)\}}|u-u^{l}(x)|^{q}\,d\mu=0.
\]
By combining this with~\eqref{eq:density of difference of levels sets}, we get~\eqref{eq:behavior in jump set 3}. 
The cases $l=1$ and $l=n(x)$ require additional computations, 
since we integrate over 
sets where $u$ may be unbounded, but these cases were already covered in~\cite[Theorem 4.3]{L}. 
\end{proof}

Thus we have a rather complete measure theoretic description of the behavior of a BV function in its jump set: at 
$\mathcal H$-almost every point $x\in S_u$, the space $X$ can be partitioned into at most $\lfloor 1/\gamma\rfloor$ 
sets such that in each set, 
$u$ converges in a Lebesgue point sense to a real number in the interval 
$[u^{\wedge}(x),u^{\vee}(x)]$. Note that in Example~\ref{ex:one dimensional space}, we have $\gamma=1/4$.

\begin{proposition}\label{prop:one-sided continuity}
Let $u\in\BV(X)$ and let $\eps>0$. Then there exists $G\subset X$ with
$\capa_1 (G)<\eps$ such that if $y_k\to x$ with $y_k\in X\setminus G$ and $x\in S_u\setminus G$, then
\begin{equation}\label{eq:upper hemicontinuity in jump set}
\min_{l_2\in\{1,\ldots,n\}}|u^{l_1}(y_k)-u^{l_2}(x)|\to 0
\end{equation}
for every $l_1=1,\ldots,n$.
\end{proposition}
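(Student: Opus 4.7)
The plan is to combine Theorem~\ref{thm:behavior in jump set} with the maximal-function capacity bound of Lemma~\ref{lem:capacity level set maximal function}, applied to the characteristic functions of the level-band sets $F_{\sigma,\tau} := \{\sigma < u < \tau\}$. These sets are of finite perimeter when $\sigma,\tau$ are drawn from a countable dense subset $T \subset \R$ on which the super-level sets $\{u > t\}$ have finite perimeter (via the coarea formula~\eqref{eq:coarea}); we also allow $\sigma = -\infty$ or $\tau = +\infty$, with $F_{-\infty,\tau} = \{u<\tau\}$ and $F_{\sigma,+\infty} = \{u>\sigma\}$. The mechanism is to prevent, after removing a set $G$ of small $1$-capacity, the value $u^{l_1}(y_k)$ from accumulating inside any gap $(u^{l_2}(x), u^{l_2+1}(x))$ or outside $[u^\wedge(x), u^\vee(x)]$ as $y_k \to x$.

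I would first assemble the exceptional set. Let $N_0$ be the $\mathcal H$-null union (hence of zero $1$-capacity by Remark~\ref{rmk:capacities etc}) of: the set where Theorem~\ref{thm:behavior in jump set} fails, the set where $|u^\wedge|$ or $|u^\vee|$ is infinite, and the sets $\partial^* F_{\sigma,\tau} \setminus \Sigma_\gamma F_{\sigma,\tau}$ for each admissible pair. Enumerating the countable family $\{(\sigma_i,\tau_i)\}_{i\in\N}$, I would apply Lemma~\ref{lem:capacity level set maximal function} to each $\chi_{F_{\sigma_i,\tau_i}}$ at the fixed scale $R=1$ with thresholds $t_i \in (0,\gamma/2)$ chosen so that $\sum_{i\in\N} C_1 \|\chi_{F_{\sigma_i,\tau_i}}\|_{\BV(X)}/t_i < \eps/2$, where $C_1$ is the constant from the lemma. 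Then $G_1 := \bigcup_i \{\mathcal M_1 \chi_{F_{\sigma_i,\tau_i}} \ge t_i\}$ satisfies $\capa_1(G_1) < \eps/2$, and on $X \setminus G_1$ we have $\mathcal M_1 \chi_{F_{\sigma_i,\tau_i}}(y) < t_i$ for every $i$. I then set $G := G_0 \cup G_1 \cup N_0$, where $G_0$ comes from Proposition~\ref{prop:quasicontinuity for BV} with tolerance $\eps/2$; by Remark~\ref{rmk:capacities etc}, $G$ may be taken open with $\capa_1(G) < \eps$.

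To verify the conclusion, fix $x \in S_u \setminus G$, $l_1 \in \{1,\ldots,n\}$, and $y_k \to x$ with $y_k \in X \setminus G$. Suppose for contradiction that, after extracting a subsequence, $u^{l_1}(y_k) \to s \in \overline{\R}$ with $|s - u^{l_2}(x)| \ge \eta > 0$ for every $l_2$. Since $x \notin N_0$, every $u^{l_2}(x)$ is finite, so $s$ lies strictly inside some gap $(u^j(x), u^{j+1}(x))$ or outside $[u^\wedge(x), u^\vee(x)]$; pick an admissible pair $(\sigma,\tau)$ with $s \in (\sigma,\tau)$ and $(\sigma,\tau) \cap \{u^{l_2}(x)\}_{l_2} = \emptyset$. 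For large $k$, $u^{l_1}(y_k) \in (\sigma,\tau)$. Applying Theorem~\ref{thm:behavior in jump set} at $y_k$ (valid since $y_k \notin N_0$) with $\delta_k > 0$ such that $A_{l_1}^{\delta_k}(y_k) \subset (\sigma,\tau)$, the layer $\{u \in A_{l_1}^{\delta_k}(y_k)\} \subset F_{\sigma,\tau}$ has lower density at $y_k$ at least $\gamma$. Hence for every sufficiently small $r > 0$ one has $\mu(F_{\sigma,\tau} \cap B(y_k,r))/\mu(B(y_k,r)) \ge \gamma/2$, so $\mathcal M_1 \chi_{F_{\sigma,\tau}}(y_k) \ge \gamma/2 > t_i$ for the index $i$ corresponding to $(\sigma,\tau)$, contradicting $y_k \notin G_1$.

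The main technical obstacle will be the quantitative bookkeeping in Step~2: the thresholds $t_i < \gamma/2$ must be picked so that $\sum_i C_1 \|\chi_{F_{\sigma_i,\tau_i}}\|_{\BV(X)}/t_i < \eps/2$. Because $\|\chi_{F_{\sigma,\tau}}\|_{\BV(X)} = \mu(F_{\sigma,\tau}) + P(F_{\sigma,\tau}, X)$ may be unbounded across the countable family, some preparation is needed: for instance, one can exploit the freedom in choosing $(\sigma,\tau)$ (given $x$ and $s$ many pairs work, each relevant only on a dyadic range of $d(y_k,x)$-scales) to pre-localize the family and cap each BV-norm contribution by an $i$-dependent constant, then redistribute the capacity budget using weights summing in $\ell^1$.
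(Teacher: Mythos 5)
Your proposal takes a genuinely different route from the paper's. The paper proves this by mollifying $u$ in nested open shells $W_m=\bigcup_{p\ge m}A_p$ around a compact set $H\subset S_u$, carefully controlling how the discrete convolutions interact with the level-band sets $A^\delta_{l_2}(x)$ as one approaches $H$, and transferring via Lemma~\ref{lem:from BV to uniform convergence}. Crucially, the capacity of the exceptional set is managed through the annulus decomposition $\{A_p\}$ and weights $a_p\to 0$ satisfying~\eqref{eq:weighted sum}, which cap the contribution of each annulus. Your proposal replaces all this by a single application of Lemma~\ref{lem:capacity level set maximal function} to the band indicators $\chi_{F_{\sigma,\tau}}$, plus the density-$\ge\gamma$ argument from the proof of Theorem~\ref{thm:behavior in jump set} to derive a contradiction. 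The contradiction mechanism itself is sound (modulo a minor slip: for $l_1=1$ or $l_1=n(y_k)$ the set $A^{\delta_k}_{l_1}(y_k)$ is a half-line, so you should use the small band $\{u^{l_1}(y_k)-\eta'<u<u^{l_1}(y_k)+\eta'\}$, which still has lower density $\ge\gamma$ at $y_k$ by~\eqref{eq:density of difference of levels sets}).

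However, the capacity estimate for $G_1$ is a genuine gap, not just a technicality. You require the thresholds to satisfy $t_i\in(0,\gamma/2)$ (this is forced, since the contradiction produces $\mathcal M_1\chi_{F_{\sigma_i,\tau_i}}(y_k)\ge\gamma/2$ and you must be able to conclude $y_k\in G_1$), and simultaneously you want $\sum_i C_1\|\chi_{F_{\sigma_i,\tau_i}}\|_{\BV(X)}/t_i<\eps/2$. But $t_i<\gamma/2$ forces each term to be at least $(2C_1/\gamma)\|\chi_{F_{\sigma_i,\tau_i}}\|_{\BV(X)}$, so the sum is bounded \emph{below} by $(2C_1/\gamma)\sum_i\|\chi_{F_{\sigma_i,\tau_i}}\|_{\BV(X)}$. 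This quantity is determined by $u$ and the choice of the countable dense family; it cannot be made small by choosing thresholds, and it is generically infinite (the perimeters $P(\{u>t\},X)$ are integrable in $t$ but their values over a countable dense set need not be summable, and the measures $\mu(F_{\sigma_i,\tau_i})$ need not be summable either). Your closing paragraph acknowledges this and suggests a ``pre-localization'' and ``redistribution of the capacity budget'' using the dyadic range of scales $d(y_k,x)$, but no concrete mechanism is given. Making this precise would essentially force you back into the paper's annulus decomposition: restricting to bands $A_p$ around $H$, weighting by $a_p$, and summing the variation-measure contributions as in~\eqref{eq:weighted sum}. So the slicker maximal-function framing does not, as written, avoid the hard part of the proof; that part is missing.
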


\begin{proof}
For $x\in X$, set
\[
\delta(x):=\min\{u^{l+1}(x)-u^l(x),\ l=1,\ldots,n(x)-1\}/2.
\]
We divide the proof into two steps.
\paragraph{Step 1.}
First assume that we have a compact set $H\subset S_u$ where $n(x)$ is constant,  
the functions $-\infty<u^1<\ldots <u^{n(x)}<\infty$ are continuous, and
\begin{equation}\label{eq:uniform Lebesgue points in jump set}
\frac{1}{\mu(B(x,r))}\int_{B(x,r)\cap \{u\in A_{l}^{\delta(x)/8}(x)\}}|u-u^l(x)|\,d\mu\to 0\quad\textrm{as }r\to 0
\end{equation}
uniformly in the set $H$ for every $l=1,\ldots,n(x)$.

We will demonstrate that there is a set $\widetilde{G}\subset X$ with $\capa_1(\widetilde{G})<\eps$ such that 
whenever $y_k\to x$ with $y_k\in X\setminus (H\cup \widetilde{G})$, $x\in H$, and 
$u^{l_1}(y_k)\in A^{\delta(x)}_{l_2}(x)$ for given $l_1\in\{1,\ldots,n\}$ and $l_2\in\{1,\ldots,n(x)\}$, then
\begin{equation}\label{eq:results in jump set in special case}
\lim_{k\to\infty}|u^{l_1}(y_k)-u^{l_2}(x)|= 0.
\end{equation}
In other words, we have continuity up to the jump set as long as we approach it from a specific "side", 
more precisely a specific level set of $u$.

For $p\in\N$, let 
\[
 A_p:=\{x\in X :\, 2^{-p-1}\le \dist(x,H)<2^{-p}\}.
\]
Since $\Vert Du\Vert(X)$ is finite and the sets $A_p$ are pairwise disjoint, we have
\[
\sum_{p\in \N}\Vert Du\Vert(A_p)<\infty.
\]
It follows that for each $j\in\N$ there exists $N_j\in\N$ such that
\[
\sum_{p\ge N_j}\Vert Du\Vert(A_p)\le 4^{-j}\eps.
\]
We can choose $j\mapsto N_j$ to be strictly increasing. We set
$a_p:=2^{-j}$ for $N_{j}< p\le N_{j+1}$, so that $a_p\to 0$ as $p\to\infty$. Now
\begin{equation}\label{eq:weighted sum}
\sum_{p\ge N_1+1}\frac{\Vert Du\Vert(A_{p-1})}{a_p}
  =\sum_{j\in\N}\sum_{p=N_j+1}^{N_{j+1}}2^{j}\Vert Du\Vert(A_{p-1})\le \sum_{j\in\N}2^{-j}\eps=\eps.
\end{equation}
Let
\[
G_p:=\left\{z\in A_p:\,\exists\, 0<r_z< 2^{-p-2}/\lambda\ \ \textrm{s.t.}\ \ \vint{B(z,r_z)}|u-u_{B(z,r_z)}|\,d\mu>a_p\right\}.
\]
Pick $p\ge 2$ and take a cover $\{B(z,\lambda r_z)\}_{z\in G_p}$ of $G_p$. By the 5-covering theorem, we can select a 
countable disjoint subcollection $\{\lambda B_j=B(z_j,\lambda r_j)\}_{j\in\N}$ such that the balls $5\lambda B_j$ cover $G_p$. For 
each $j\in\N$, we have by the Poincar\'e inequality
\[
a_p<\vint{B_j}|u-u_{B_j}|\,d\mu\le Cr_j\frac{\Vert Du\Vert
(\lambda B_j)}{\mu(B_j)}.
\]
Since all the radii necessarily satisfy $5\lambda r_j\le 1$, 
\begin{align*}
\capa_1(G_p) &\le
C\mathcal H_{1}(G_p)\le C\sum_{j\in\N}\frac{\mu(5\lambda B_j)}{5\lambda r_j}\le C\sum_{j\in\N}\frac{\mu(B_j)}{r_j} 
\le C\sum_{j\in\N}\frac{\Vert Du\Vert(\lambda B_j)}{a_p}\\
&\le \frac{C}{a_p}\Vert Du\Vert(A_{p-1}\cup A_p\cup A_{p+1}).
\end{align*}
In the last inequality we used the fact that the balls $\lambda B_j$ are disjoint.
Defining $G:=\bigcup_{p\ge N_1+1}G_p$, we have by~\eqref{eq:weighted sum}
\[
\capa_1(G)\le \sum_{p\ge N_1+1}\capa_1(G_p)\le C\sum_{p\ge N_1+1}\frac{\Vert Du\Vert(A_{p-1})}{a_p}\le C\eps.
\]

We need to prove an analog of Proposition~\ref{prop:uniform convergence and continuity}, this time not for $\widetilde{u}$ but for the functions $u^l$.
For each $m\in\N$, set $W_m:=\bigcup_{p= m}^{\infty}A_p$,
and apply Corollary~\ref{cor:mollifying in an open set} with
$U=W_m$ and $\kappa=\kappa_m\searrow 0$ to obtain a function $w^m\in\BV(X)$. By the proof of Theorem~\ref{thm:mollifying in an open set}, we can assume that the scale of the corresponding Whitney type coverings is fixed with $R=1$. Fix $m\ge N_1+1$.

Consider a sequence $y_k\to x$ with $y_k\in X\setminus (H\cup G)$ and $x\in H$, such
that for a fixed $l_2\in\{1,\ldots,n(x)\}$, $w^m(y_k)\in A_{l_2}^{\delta(x)/2}(x)$
for each $k\in\N$. For each $y_k$ let $x_k\in H$ such that $d(y_k,x_k)=\dist(y_k,H)$.
Clearly $d(y_k,x_k)\to 0$ as $k\to \infty$, and thus also $d(x_k,x)\to 0$, whence
$u^{l_2}(x_k)\to u^{l_2}(x)$. Thus we need to show that $|w^m(y_k)-u^{l_2}(x_k)|\to 0$
as $k\to\infty$.

Define $B_k:=B(y_k,\dist(y_k,H)/4\lambda)$ for each $k\in\N$, and then fix $y_k\in W_{m+2}$. According to the proof of Theorem~\ref{thm:mollifying in an open set}, $w^m=\lim_{i\to\infty}w_i$ 
for discrete convolutions
\[
w_i=\sum_{j\in\N}u_{B^i_j}\phi^i_j
\]
defined in open sets $U_i\subset W_m$, $i\in\N$, at scale $R=1$.
For large enough $i\in\N$ so that
\[
U_i\supset W_{m+1}\cap\left\{z\in X:\,\dist(z,H)\ge\dist(y_k,H)/2\right\},
\]
we have for all $2B^i_j\ni y_k$ that $B_j^i\subset B_k$ with radii comparable to $\dist(y_k,H)$. Thus
\begin{align*}
|w_i(y_k)-u_{B_k}|
&\le \sum_{j\in\N}|\phi_j^i(y_k)| |u_{B^i_j}-u_{B_k}|\\
&= \sum_{j\in\N,\, 2B_j^i\ni y_k}|\phi_j^i(y_k)| |u_{B^i_j}-u_{B_k}|\\
&\le C_0\vint{B^i_j} |u-u_{B_k}|\,d\mu\\
& \le C\vint{B_k}|u-u_{B_k}|\,d\mu.
\end{align*}
By taking the limit $i\to\infty$, we get
\begin{equation}\label{eq:discrete convolution and mean}
|w^m(y_k)-u_{B_k}|\le C\vint{B_k}|u-u_{B_k}|\,d\mu\le Ca_p,
\end{equation}
where $p\in\N$ is such that $y_k\in A_p\setminus G_p$.
As $k\to\infty$ we have $p\to\infty$, and so $a_p\to 0$. Hence $u_{B_k}\in A_{l_2}^{\delta(x)/3}(x)$ for large $k$, and
\begin{equation}\label{eq:level set fills ball}
\frac{\mu(B_k\cap \{u\notin A_{l_2}^{\delta(x)/4}(x)\})}{\mu(B_k)}\le \frac{12}{\delta(x)}\vint{B_k}|u-u_{B_k}|\,d\mu\to 0
\end{equation}
as $k\to\infty$. Therefore
\begin{equation}\label{eq:mean in level set}
\begin{split}
|u_{B_k\cap \{u\in A_{l_2}^{\delta(x)/4}(x)\}}-u_{B_k}|
&\le \vint{B_k\cap \{u\in A_{l_2}^{\delta(x)/4}(x)\}}|u-u_{B_k}|\,d\mu\\
&\le \frac{\mu(B_k)}{\mu(B_k\cap \{u\in A_{l_2}^{\delta(x)/4}(x)\})}\vint{B_k}|u-u_{B_k}|\,d\mu\\
&\to 0
\end{split}
\end{equation}
as $k\to\infty$. Now we can estimate
\begin{align*}
|w^m(y_k)- & u^{l_2}(x_k)|
\le |w^m(y_k)-u_{B_k}|\\
&+|u_{B_k}-u_{B_k\cap \{u\in A_{l_2}^{\delta(x)/4}(x)\}}|
+|u_{B_k\cap \{u\in A_{l_2}^{\delta(x)/4}(x)\}}-u^{l_2}(x_k)|.
\end{align*}
Here the first term converges to 0 as $k\to\infty$ by~\eqref{eq:discrete convolution and mean}, and the second term converges to 0 by~\eqref{eq:mean in level set}. For large enough
$k$,
by~\eqref{eq:level set fills ball} we have
$\mu(B_k\cap\{u\in A_{l_2}^{\delta(x)/4}(x)\})/\mu(B_k)\ge 1/2$, so that also
$C\mu(B_k\cap\{u\in A_{l_2}^{\delta(x)/4}(x)\})\ge \mu(B(x_k,2d(y_k,x_k)))$, 
and by the continuity of the functions
$u^l$ in $H$ we have
$|u^l(x_k)-u^l(x)|<\delta(x)/10$ for all $l=1,\ldots,n$.
Thus the third term is at most
\begin{align*}
&\vint{B_k\cap \{u\in A_{l_2}^{\delta(x)/4}(x)\}}|u-u^{l_2}(x_k)|\,d\mu\\
&\qquad\quad \le \frac{C}{\mu(B(x_k,2d(y_k,x_k)))}\int_{B(x_k,2d(y_k,x_k))\cap \{u\in A_{l_2}^{\delta(x)/4}(x)\}}|u-u^{l_2}(x_k)|\,d\mu\\
&\qquad\quad \le \frac{C}{\mu(B(x_k,2d(y_k,x_k)))}\int_{B(x_k,2d(y_k,x_k))\cap \{u\in A_{l_2}^{\delta(x_k)/8}(x_k)\}}|u-u^{l_2}(x_k)|\,d\mu,
\end{align*}
which converges to 0 by~\eqref{eq:uniform Lebesgue points in jump set}. It follows
that $|w^m(y_k)-u^{l_2}(x_k)|\to 0$ as $k\to\infty$, and since we had $u^{l_2}(x_k)\to u^{l_2}(x)$, we have $w^m(y_k)\to u^{l_2}(x)$ as $k\to\infty$.

By Corollary~\ref{cor:mollifying in an open set} we know that $w^m\to u$ in $\BV(X)$ as $m\to\infty$, and so by Lemma~\ref{lem:from BV to uniform convergence} and by picking a subsequence, if necessary, there exists
$F\subset X$ with $\capa_1(F)<\eps$ such that for some sequence $\alpha_m\searrow 0$, $|(w^m)^{\wedge}- u^{\wedge}|\le \alpha_m$ and $|(w^m)^{\vee}-u^{\vee}|\le \alpha_m$ in $X\setminus F$ for any $m\in\N$. But $(w^m)^{\wedge}=(w^m)^{\vee}=\widetilde{w^m}$ in
$W_m$, and so
\begin{equation}\label{eq:uniform convergence for w_m:s}
|\widetilde{w^m}-u^{l}|\le \alpha_m
\end{equation}
in $W_m\setminus F$ for any $l=1,\ldots,n$ and $m\in\N$. Take a sequence $y_k\to x$ with $y_k\in X\setminus (F\cup G\cup H)$,
$x\in H$,
and $u^{l_1}(y_k)\in A^{\delta(x)}_{l_2}(x)$ for given $l_1\in\{1,\ldots,n\}$ and $l_2\in\{1,\ldots,n(x)\}$. Then for sufficiently large $m\in\N$, by~\eqref{eq:uniform convergence for w_m:s} we have $\widetilde{w^m}(y_k)\in A^{\delta(x)/2}_{l_2}(x)$ for $k$ large enough such that
$y_k\in W_m$, so that
\begin{align*}
&\limsup_{k\to\infty}|u^{l_1}(y_k)-u^{l_2}(x)|\\
&\qquad\qquad \le \limsup_{k\to\infty}|u^{l_1}(y_k)-\widetilde{w^m}(y_k)|+\limsup_{k\to\infty}|\widetilde{w^m}(y_k)-u^{l_2}(x)|\\
&\qquad\qquad\le \alpha_m.
\end{align*}
Thus we have~\eqref{eq:results in jump set in special case}.

\paragraph{Step 2.}
Now we consider the general case. Partition the Borel set $S_u$ into sets $S_p$, $p=1,\ldots,n$, in which $n(x)=p$ for all 
$x\in S_p$. Since 
\[
S_p=\{u^1<\ldots<u^{p-1}=u^p<u^{\vee}\}\cup \{u^1<\ldots<u^{p-1}<u^p=u^{\vee}\},
\]
each $S_p$ is a Borel set. (Note that the set $\{u^1<\ldots<u^{p-1}=u^p<u^{\vee}\}$
is of $1$-capacity zero, by the proof of Theorem~\ref{thm:behavior in jump set}.)

For each $i\in\N$, pick compact sets $K_p^i\subset S_p$ such that for $H_i:=\bigcup_{p=1}^n K_p^i$ we have $\Vert Du\Vert(S_u\setminus H_i)<2^{-i}\eps$. By Lusin's theorem, we can assume that each $u^l$ is continuous in $H_i$, and by Theorem~\ref{thm:behavior in jump set} and Egorov's theorem we can assume that for every $x\in H_i$, $-\infty<u^1(x)<\ldots <u^{n(x)}(x)<\infty$ with
\begin{equation}\label{eq:uniform Lebesgue points in jump set 2}
\frac{1}{\mu(B(x,r))}\int_{B(x,r)\cap \{u\in A_{l}^{\delta(x)/8}(x)\}}|u-u^l(x)|\,d\mu\to 0\quad\textrm{as }r\to 0
\end{equation}
uniformly in $H_i$ for every $l=1,\ldots,n(x)$.

For any of the sets $K_p^i$, we are now in the situation described in Step 1.
Therefore for each $i\in\N$ there is a set $G_i$ with $\capa_1(G_i)<2^{-i}\eps$ such that we have the following.
Let $y_k\to x$ with $y_k\in X\setminus (K_p^i\cup G_i)$, $x\in K_p^i$,
and $u^{l_1}(y_k)\in A_{l_2}^{\delta(x)}(x)$ for some $l_1\in\{1,\ldots,n\}$ and
$l_2\in \{1,\ldots,n(x)=p\}$. Then
\begin{equation}\label{eq:using step 1}
u^{l_1}(y_k)\to u^{l_2}(x)
\end{equation}
by Step 1. Moreover,
\[
\Vert Du \Vert\left(S_u\setminus \bigcup_{i\in\N}H_i\right)=0,
\]
so that by~\eqref{eq:decomposition},
\[
\mathcal H\left(S_u\setminus \bigcup_{i\in\N}H_i\right)=0.
\]
Define
\[
G:=\bigcup_{i\in\N}G_i\cup \left(S_u\setminus \bigcup_{i\in\N}H_i\right)\cup \{|u^{\wedge}|=\infty\}\cup \{|u^{\vee}|=\infty\}.
\]
Then $\capa_1(G)<\eps$. Let $y_k\to x$ with $y_k\in X\setminus G$,
$x\in S_u\setminus G$,
and $u^{l_1}(y_k)\in A_{l_2}^{\delta(x)}(x)$ for some $l_1\in\{1,\ldots,n\}$ and $l_2\in\{1,\ldots,n(x)\}$. Note that $x\in H_i$ for some $i\in\N$. If $y_k\in H_i$, then by the continuity of the functions $u^l$ in $H_i$ we have $u^{l_1}(y_k)\to u^{l_1}(x)$, and since $u^{l_1}(y_k)\in A_{l_2}^{\delta(x)}(x)$, we necessarily have $l_1=l_2$ and thus $u^{l_1}(y_k)\to u^{l_2}(x)$. On the other hand, if $y_k\in X\setminus H_i$, then $u^{l_1}(y_k)\to u^{l_2}(x)$ by~\eqref{eq:using step 1}.

This immediately implies~\eqref{eq:upper hemicontinuity in jump set}, since we have
\[
\bigcup_{l_2=1}^{n(x)} A_{l_2}^{\delta(x)}(x)=\R
\]
at every $x\in X\setminus G$.
\end{proof}

By combining Proposition~\ref{prop:quasicontinuity for BV} and Proposition~\ref{prop:one-sided continuity} with 
the fact that $\capa_1$ is an outer capacity as noted in Remark~\ref{rmk:capacities etc},
Theorem~\ref{thm:main result} is proved.

\begin{example}
It is not true that by discarding a suitable set of small capacity $G$, we would have 
that $u^l|_{S_u\setminus G}$ is continuous for each $l$.  Consider $X=\R$ with the Euclidean distance 
and the $1$-dimensional Lebesgue measure, and set
\[
u:=\chi_{[-1,0]}+\sum_{i\in\N}2^{-i}\chi_{(2^{-i-1},2^{-i}]}.
\]
Then $u^\vee(2^{-i-1})=2^{-i}\not\to 1=u^\vee(0)$ as $i\to\infty$.
Moreover, the $1$-capacity of every point is $2$, so the only set of $1$-capacity smaller than $2$ is the
empty set.
\end{example}

\section{Application to sets of finite perimeter}\label{sec:sets of finite perimeter}

In this section we will discuss the implications of Theorem~\ref{thm:main result} for sets of finite perimeter. 
Federer's structure theorem states that a set $E\subset\R^n$ is of finite perimeter if and only if $\mathcal{H}(\partial^*E)$
is finite, see \cite[Section 4.5.11]{Fed}.
In a complete metric space $X$
with a doubling measure that supports a $(1,1)$-Poincar\'e inequality, the ``only if" direction has been shown by Ambrosio, 
see~\eqref{eq:def of theta}.
The ``if" direction was shown for a certain class of metric measure spaces in~\cite{KLS}, but remains open in general.
As part of the proof of the ``if" direction it is usually shown that the collection of lines parallel to the coordinate axes
in $\R^n$, which pass from the measure theoretic interior of $E$ to the measure theoretic exterior of $E$ but do not 
intersect $\partial^*E$, must have $1$-modulus zero, see for example the proof in~\cite[p. 222]{EvaG92}. In this section we
will prove a similar result in the metric setting,
\emph{provided} we know that $E\subset X$ is of finite perimeter. We also give a partial converse, namely that if
$E$ is a $\mu$-measurable set with $\mathcal{H}(\partial^*E)$ finite and the $1$-modulus of curves intersecting both the measure theoretic
interior of $E$ and the measure theoretic exterior of $E$ without intersecting $\partial^*E$ in between is zero, then
$E$ is of finite perimeter. (A related partial generalization was previously considered in~\cite{KoLa}.)

The measure theoretic interior $\mathcal{I}(E)$
and the measure theoretic exterior $\mathcal{E}(E)$ of a $\mu$-measurable set $E\subset X$ are defined as follows:
\[
\mathcal{I}(E):=\left\{x\in X :\, \lim_{r\to0^+}\frac{\mu(B(x,r)\cap E)}{\mu(B(x,r))}=1\right\}
\]
and
\[
\mathcal{E}(E):=\left\{x\in X :\, \lim_{r\to0^+}\frac{\mu(B(x,r)\cap E)}{\mu(B(x,r))}=0\right\}.
\]
Clearly $\partial^*E=X\setminus[\mathcal{I}(E)\cup\mathcal{E}(E)]$. Let $u=\chi_E$. Observe that $x\in \mathcal{I}(E)$ means that $u^\vee(x)=u^\wedge(x)=1$, $x\in \mathcal{E}(E)$ means that 
$u^\vee(x)=u^\wedge(x)=0$, and $x\in \partial^*E$ means that 
$u^\vee(x)=1$ and $u^\wedge(x)=0$, i.e. $x\in S_u$.

First we note that some sets of finite perimeter, such as the enlarged rationals, can exhibit 
bizarre behavior that demonstrates the necessity of excluding a set $G$ in
Theorem~\ref{thm:main result}. 

\begin{example}\label{ex:enlarged-rationals}
Let $\{q_i\}_{i\in\N}$ be an enumeration of $\mathbb{Q}\times\mathbb{Q}\subset\R^2$, and define
\[
E:=\bigcup_{i\in\N} B(q_i,2^{-i}).
\]
Clearly $\mathcal L^2(E)\le 2\pi$, and $\chi_E=\lim_j\chi_{E_j}$, where $E_j:=\bigcup_{i=1}^j B(q_i,2^{-i})$,
the limit occurring in $L^1(\R^2)$. Since 
$P(E_j,\R^2)\le 2\pi \sum_{i=1}^j2^{-i}$, we have $P(E,\R^2)<\infty$, so that also $\mathcal H(\partial^*E)<\infty$.
However, $\partial E=\R^2\setminus E$.
Thus, denoting $u:=\chi_E$, for every Lebesgue point $x\in X\setminus E$ there exists a sequence $y_k\to x$ with $y_k\in E$ such that 
\[
u^{\wedge}(y_k)=u^{\vee}(y_k)=1\not \rightarrow 0=u^{\wedge}(x)=u^{\vee}(x),
\]
so that the conclusion of Theorem~\ref{thm:main result} fails with the choice $G=\emptyset$.
On the other hand, given $\eps>0$, by choosing $G:=\bigcup_{i=k}^{\infty}\overline{B}(q_i,2^{-i})$ (or a slightly 
larger open set) with large enough $k$ we have that the conclusion of Theorem~\ref{thm:main result} holds.

Denote the $2$-dimensional Lebesgue measure by $\mathcal L^2$. For every Lebesgue point $x\in \R^2\setminus E$ 
and every $r>0$ we have
\[
0<\frac{\mathcal L^2(B(x,r)\cap E)}{\mathcal L^2(B(x,r))}<1,
\]
and so $P(E,B(x,\lambda r)) >0$
by the Poincar\'e inequality~\eqref{eq:poincare for BV}. Now by~\eqref{eq:def of theta} 
we must have $\mathcal H(\partial^*E\cap B(x,\lambda r))>0$, and so $\overline{\partial^*E}=\R^2\setminus E$.
\end{example}

This example demonstrates that the measure theoretic boundary of a set of finite perimeter need not be closed, that it can be 
much smaller than the topological boundary, and that the conclusion of Theorem~\ref{thm:main result} can fail in a very 
large set if we choose $G=\emptyset$. However, from Theorem~\ref{thm:main result}, by removing a suitable set $G$ of 
small capacity, both the topological and measure theoretic boundaries of a set of finite perimeter become very reasonably behaved.
For $A,E\subset X$, let us denote by $\partial_A E$ the boundary of $E\cap A$ in the subspace topology of $A$.

\begin{proposition}\label{prop:topological and measure theoretic boundary}
Let $E\subset X$ be a set of finite perimeter. For $\eps>0$ let $G\subset X$ be an open set provided by 
Theorem~\ref{thm:main result}, with $\capa_1(G)<\eps$. Then 
\begin{equation}\label{eq:topological and measure theoretic boundary}
\partial_{X\setminus G}\mathcal{I}(E)\subset \partial^*E\setminus G,
\end{equation}
and both $\partial_{X\setminus G}\mathcal{I}(E)$ and $\partial^*E\setminus G$ are closed subsets of $X$. 
\end{proposition}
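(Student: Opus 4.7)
The plan is to apply Theorem~\ref{thm:main result} to $u := \chi_E$. The key observation that makes everything transparent is that $u$ takes only values in $\{0,1\}$, and hence so do all the jump values $u^l$. A short computation from the definitions in~\eqref{eq:lower approximate limit}--\eqref{eq:definition of the n limits} shows that at any $x \in X$ one has: all $u^l(x) = 1$ precisely when $x \in \mathcal{I}(E)$; all $u^l(x) = 0$ precisely when $x \in \mathcal{E}(E)$; and $u^1(x) = 0$ with $u^l(x) = 1$ for $l \geq 2$ precisely when $x \in \partial^*E = S_u$. In particular, whenever one point lies in $\mathcal{I}(E)$ and the other in $\mathcal{E}(E) \cup \partial^*E$, some pair of jump values differs by exactly $1$, and symmetrically when the roles are swapped.

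For the inclusion \eqref{eq:topological and measure theoretic boundary}, I would argue by contradiction. Let $x \in \partial_{X \setminus G} \mathcal{I}(E)$; since this is a boundary in the subspace topology of $X \setminus G$, there exist sequences $y_k \to x$ inside $\mathcal{I}(E) \cap (X \setminus G)$ and $z_k \to x$ inside $(\mathcal{E}(E) \cup \partial^*E) \cap (X \setminus G)$. If $x \in \mathcal{I}(E)$, then every $u^{l_2}(x) = 1$ while $u^1(z_k) = 0$, so applying Theorem~\ref{thm:main result} with $l_1 = 1$ along $z_k$ would force $\min_{l_2}|0 - 1| \to 0$, absurd. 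If instead $x \in \mathcal{E}(E)$, applying the theorem with $l_1 = n$ along $y_k$, where $u^n(y_k) = 1$ and every $u^{l_2}(x) = 0$, gives the same contradiction. Hence $x \in \partial^* E$, and since $x \in X \setminus G$, we get $x \in \partial^*E \setminus G$.

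Closedness of $\partial^*E \setminus G$ uses exactly the same dichotomy. Suppose $x_k \to x$ with $x_k \in \partial^*E \setminus G$. Since $G$ is open, $X \setminus G$ is closed and $x \in X \setminus G$. Each $x_k$ has $u^1(x_k) = 0$ and $u^n(x_k) = 1$; if $x \in \mathcal{I}(E)$ (resp.\ $\mathcal{E}(E)$), then Theorem~\ref{thm:main result} applied with $l_1 = 1$ (resp.\ $l_1 = n$) to the sequence $x_k$ would again yield $1 \to 0$. Therefore $x \in \partial^*E$, and the set is closed in $X$.

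Closedness of $\partial_{X \setminus G} \mathcal{I}(E)$ requires no quantitative work: the boundary of any subset of a topological space is closed in that space, so $\partial_{X \setminus G} \mathcal{I}(E)$ is closed in the subspace $X \setminus G$, and since $X \setminus G$ is itself closed in $X$, we are done. There is no real obstacle in this argument; the whole point is that for a characteristic function the jump values collapse into $\{0,1\}$, so the minimum in Theorem~\ref{thm:main result} can only ever equal $0$ or $1$, and the ``$1$'' case is ruled out by that theorem.
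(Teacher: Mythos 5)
Your proof is correct and follows essentially the same route as the paper: apply Theorem~\ref{thm:main result} to $u=\chi_E$, exploit that every jump value $u^l$ lies in $\{0,1\}$, and observe that the minimum in the theorem's conclusion can only vanish in the limit if some $u^{l_2}(x)$ actually equals the approaching value. The paper phrases the first inclusion directly (deducing $u^\wedge(x)=0$ and $u^\vee(x)=1$ from the two sequences) rather than by contradiction, but the content is the same.
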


\begin{proof}
If $x\in \partial_{X\setminus G}\mathcal{I}(E)$, there are sequences $y_i$ in $\mathcal{I}(E)\setminus G$ converging to $x$ and 
$z_i$ in
$X\setminus(\mathcal{I}(E)\cup G)$ also converging to $x$. Set $u:=\chi_E$. Then $u^{\wedge}(y_i)=1$ and $u^{\wedge}(z_i)=0$ 
(note that we have either $z_i\in \mathcal{E}(E)$ or $z_i\in \partial^*E$). Thus by Theorem~\ref{thm:main result},
we must have $u^{\wedge}(x)=0$ and $u^{\vee}(x)=1$, that is, $x\in\partial^*E$.

Now we show that $\partial^*E\setminus G$ is closed in $X\setminus G$. 
If $x_i\in \partial^*E\setminus G$ with $x_i\to x\in X\setminus G$, then $u^{\wedge}(x_i)=0$ and $u^{\vee}(x_i)=1$ for 
all $i\in\N$, so again by Theorem~\ref{thm:main result} we have $u^{\wedge}(x)=0$ and $u^{\vee}(x)=0$. Since $G$ is 
open, the sets $\partial_{X\setminus G}\mathcal{I}(E)$ and $\partial^*E\setminus G$ are closed also in $X$.
\end{proof}

\begin{lemma}\label{lem:curves}
For $i\in\N$, let $G_i\subset X$ be a nested sequence of sets (that is, $G_{i+1}\subset G_i$)
with $\capa_1(G_i)<2^{-i}$. Let $\widehat{\Gamma}$ be the 
family of non-constant curves that intersect each $G_i$. Then $\Mod_1(\widehat{\Gamma})=0$.
\end{lemma}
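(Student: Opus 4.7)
The plan is to, after replacing the $G_i$ by an open nested decreasing family $V_i$ with the same capacity estimates, construct an $L^1$ function $\rho$ along which every $\gamma\in\widehat{\Gamma}$, except for a subfamily of $1$-modulus zero, has infinite integral. By outer regularity of $\capa_1$ (Remark~\ref{rmk:capacities etc}), I choose open $\widetilde V_i\supset G_i$ with $\capa_1(\widetilde V_i)<2^{-i}$ and set $V_i:=\bigcap_{j\le i}\widetilde V_j$. Each $V_i$ is then open, $V_{i+1}\subset V_i$, $V_i\supset G_i$ (because the $G_i$ themselves are nested and so $G_i\subset G_j\subset\widetilde V_j$ for $j\le i$), and $\capa_1(V_i)<2^{-i}$. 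The definition of $\capa_1$ now furnishes $u_i\in N^{1,1}(X)$ with $0\le u_i\le 1$, $u_i=1$ on $V_i$, and an upper gradient $g_i$ satisfying $\Vert u_i\Vert_{L^1(X)}+\Vert g_i\Vert_{L^1(X)}<2^{-i+1}$. Set $\rho:=\sum_{i\in\N}g_i\in L^1(X)$ with $\Vert\rho\Vert_{L^1(X)}<2$.

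Introduce the exceptional set
\[
B:=\bigcap_{N\in\N}\bigcup_{i\ge N}\{u_i\ge \tfrac{1}{2}\}.
\]
For $\epsilon>0$ the open set $\{u_i>1/2-\epsilon\}$ is a neighborhood of $\{u_i\ge 1/2\}$ on which $u_i/(1/2-\epsilon)>1$, giving $\capa_1(\{u_i\ge 1/2\})\le\Vert u_i\Vert_{N^{1,1}(X)}/(1/2-\epsilon)$; letting $\epsilon\to 0$ yields $\capa_1(\{u_i\ge 1/2\})\le 2\Vert u_i\Vert_{N^{1,1}(X)}<2^{-i+2}$. Countable subadditivity of $\capa_1$ then forces $\capa_1(B)=0$, whence by the standard implication that a set of zero $1$-capacity is avoided by $1$-modulus almost every non-constant curve (see e.g.\ the monograph of A.~Bj\"orn and J.~Bj\"orn), the family $\Gamma_B$ of non-constant curves meeting $B$ satisfies $\Mod_1(\Gamma_B)=0$.

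Now take any $\gamma\in\widehat{\Gamma}\setminus\Gamma_B$ and pick an endpoint $x$ of $\gamma$. Since $x\notin B$, there exists $i_0=i_0(\gamma)$ with $u_i(x)<1/2$ for every $i\ge i_0$. For each such $i$, the assumption $\gamma\cap G_i\neq\emptyset$ produces $y_i\in\gamma\cap V_i$ with $u_i(y_i)=1$, and the upper gradient inequality applied to the subcurve from $x$ to $y_i$ gives $\int_{\gamma}g_i\,ds\ge 1-u_i(x)>1/2$. Summing over $i\ge i_0$ forces $\int_{\gamma}\rho\,ds=\infty$. Since $\rho\in L^1(X)$, for each $M>0$ the function $\rho/M$ is admissible for $\widehat{\Gamma}\setminus\Gamma_B$, yielding $\Mod_1(\widehat{\Gamma}\setminus\Gamma_B)\le\Vert\rho\Vert_{L^1(X)}/M$, hence $\Mod_1(\widehat{\Gamma}\setminus\Gamma_B)=0$. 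Combined with $\Mod_1(\Gamma_B)=0$ and subadditivity of $1$-modulus, this gives $\Mod_1(\widehat{\Gamma})=0$.

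The main obstacle is that each $u_i$ is guaranteed to equal $1$ only on $V_i$ and may take arbitrary values in $[0,1]$ elsewhere, so the naive upper gradient estimate at an arbitrary endpoint need not yield a uniform positive gain. Introducing the set $B$ where this failure persists, and then invoking the standard capacity-to-modulus implication to dispose of $\Gamma_B$, is precisely what rescues the argument.
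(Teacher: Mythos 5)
Your argument is correct in structure but follows a genuinely different route from the paper. The paper sums the \emph{functions}: it sets $u:=\sum_i u_i$, notes that $u\in N^{1,1}(X)$ because the partial sums are Cauchy, observes that the nestedness $G_{i+1}\subset G_i$ forces $u$ to be unbounded along every $\gamma\in\widehat\Gamma$ (each $y_N\in\gamma\cap G_N$ lies in $G_1,\dots,G_N$, so $u(y_N)\ge N$), and then invokes in one stroke the theorem that every Newtonian function is absolutely continuous on $1$-a.e.\ curve. You instead sum the \emph{upper gradients} to obtain an $L^1$ density $\rho$, and you show $\int_\gamma\rho=\infty$ for curves $\gamma\in\widehat\Gamma$ whose endpoints avoid an auxiliary exceptional set $B$, which you verify has zero $1$-capacity; you then dispose of the remaining curves through $B$ by the separate fact that sets of zero $1$-capacity are met by a negligible curve family. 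Both proofs delegate the real work to a single black box from Newtonian space theory, but the paper's absolute-continuity theorem silently absorbs all the endpoint bookkeeping that you must carry out explicitly via $B$, so the paper's version is shorter; your version keeps the modulus machinery in plain view, which some readers may prefer. One small caveat worth flagging: for a generic competitor $u_i\in N^{1,1}(X)$ the set $\{u_i>1/2-\eps\}$ need not be open, so the Chebyshev step $\capa_1(\{u_i\ge 1/2\})\le 2\Vert u_i\Vert_{N^{1,1}(X)}$ requires either choosing the $u_i$ Lipschitz (possible since Lipschitz functions are dense in $N^{1,1}(X)$ under doubling and a $(1,1)$-Poincar\'e inequality) or appealing to the quasicontinuity of $u_i$, which makes the level set quasiopen and justifies the estimate.
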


\begin{proof}
We will use the following observation in this proof. By~\cite[Theorem~1.56]{BB}, every function in $N^{1,1}(X)$ is 
absolutely continuous on $1$-almost every curve in $X$. 

For each $i\in\N$, take $u_i\in N^{1,1}(X)$ such that $0\le u_i\le 1$ on $X$, $u_i\ge 1$ in $G_i$, and 
$\Vert u_i\Vert_{N^{1,1}(X)}<2^{-i}$. The sequence 
$\{\sum_{i=1}^j u_i\}_{j\in\N}$ is a Cauchy sequence in $N^{1,1}(X)$, and converges therefore to $u:=\sum_{i\in\N}u_i\in N^{1,1}(X)$
(a proof of the fact that $N^{1,1}(X)$ is a Banach space can be found in~\cite{S,BB}).
Because for each $i$ we have $G_{i+1}\subset G_i$, we know that
$u$ is not bounded on any of the curves in $\widehat{\Gamma}$, 
and it follows that $u$ is not absolutely continuous on any of those curves. Now by the observation above,
the desired conclusion follows.
\end{proof}

As a consequence of Proposition~\ref{prop:topological and measure theoretic boundary} and Lemma~\ref{lem:curves}, we have the following analog of the result used in the proof of Federer's theorem, in the metric setting.

\begin{corollary}\label{cor:Fed1}
Let $E\subset X$ be of finite perimeter. Let $\Gamma$ be the collection of curves $\gamma$ in $X$
such that there exist $t_0, t_1\in [0,\ell_{\gamma}]$ with $t_0<t_1$ and
either
\begin{enumerate}
\item $\gamma(t_0)\in\mathcal{I}(E)$, $\gamma(t_1)\in \mathcal{E}(E)$ and $\gamma([t_0,t_1])\cap\partial^*E$ is empty,  or
\item $\gamma(t_0)\in\mathcal{E}(E)$, $\gamma(t_1)\in \mathcal{I}(E)$ and $\gamma([t_0,t_1])\cap\partial^*E$ is empty.
\end{enumerate}
Then $\Mod_1(\Gamma)=0$.
\end{corollary}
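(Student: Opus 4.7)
The plan is to couple Proposition~\ref{prop:topological and measure theoretic boundary} with Lemma~\ref{lem:curves}. The former says that after removing an open exceptional set $G$ of small $1$-capacity, $\partial^*E$ topologically separates $\mathcal{I}(E)$ from $X\setminus\mathcal{I}(E)$ in the subspace topology of $X\setminus G$, while the latter says curves meeting every set in a nested sequence of small-capacity sets form a $1$-modulus-zero family. It therefore suffices to build a nested sequence $\{G_i\}_{i\in\N}$ with $\capa_1(G_i)<2^{-i}$ so that every $\gamma\in\Gamma$ must meet every $G_i$.

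For each $i\in\N$, apply Proposition~\ref{prop:topological and measure theoretic boundary} with $\eps=2^{-i-2}$ to obtain an open set $U_i$ with $\capa_1(U_i)<2^{-i-2}$ and $\partial_{X\setminus U_i}\mathcal{I}(E)\subset\partial^*E$. Set $G_i:=\bigcup_{j\ge i}U_j$. Then $G_i$ is open, $G_{i+1}\subset G_i$, and by countable subadditivity $\capa_1(G_i)<2^{-i}$. Since $X\setminus G_i\subset X\setminus U_i$, any $x\in\partial_{X\setminus G_i}\mathcal{I}(E)$ is approached by sequences from both $\mathcal{I}(E)\cap(X\setminus U_i)$ and $(X\setminus U_i)\setminus\mathcal{I}(E)$ and itself lies in $X\setminus U_i$, so $x\in\partial_{X\setminus U_i}\mathcal{I}(E)\subset\partial^*E$; thus the separation property is inherited by each $G_i$.

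Now fix $\gamma\in\Gamma$ with associated $t_0<t_1$, and suppose for contradiction that $\gamma$ misses some $G_{i_0}$. Then $\gamma([t_0,t_1])\subset X\setminus G_{i_0}$. Since $\mathcal{I}(E)\cap\mathcal{E}(E)=\emptyset$ and the endpoints of $\gamma|_{[t_0,t_1]}$ lie one in $\mathcal{I}(E)$ and the other in $\mathcal{E}(E)$, the set $A:=\gamma^{-1}(\mathcal{I}(E))\cap[t_0,t_1]$ is a proper non-empty subset of the connected interval $[t_0,t_1]$, and hence admits a topological boundary point $t^*\in[t_0,t_1]$. Then $x:=\gamma(t^*)\in X\setminus G_{i_0}$ is the limit of sequences in $\gamma(A)\subset \mathcal{I}(E)\cap(X\setminus G_{i_0})$ and in $\gamma([t_0,t_1]\setminus A)\subset (X\setminus G_{i_0})\setminus\mathcal{I}(E)$, whence $x\in\partial_{X\setminus G_{i_0}}\mathcal{I}(E)\subset\partial^*E$, contradicting $\gamma([t_0,t_1])\cap\partial^*E=\emptyset$. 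Observe also that every $\gamma\in\Gamma$ is non-constant, because its endpoints on $[t_0,t_1]$ are distinct.

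Consequently every $\gamma\in\Gamma$ meets every $G_i$, so $\Gamma$ is contained in the curve family $\widehat{\Gamma}$ of Lemma~\ref{lem:curves}, and the lemma gives $\Mod_1(\Gamma)=0$. The argument is essentially bookkeeping once Proposition~\ref{prop:topological and measure theoretic boundary} and Lemma~\ref{lem:curves} are in hand; the only genuinely new ingredient is the connectedness step that produces a point of $\partial^*E$ on the curve, and this presents no real obstacle.
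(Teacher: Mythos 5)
Your proof is correct and follows essentially the same route as the paper: couple Proposition~\ref{prop:topological and measure theoretic boundary} with Lemma~\ref{lem:curves} by constructing a nested sequence of small-capacity open sets that every curve in $\Gamma$ must hit. The one substantive difference is in how the nesting is arranged. You take $G_i:=\bigcup_{j\ge i}U_j$ (with $\capa_1(U_j)<2^{-j-2}$ to control the sum), so that each $G_i$ is a \emph{superset} of $U_i$; since $X\setminus G_i\subset X\setminus U_i$, the separation property $\partial_{X\setminus G_i}\mathcal{I}(E)\subset\partial^*E$ passes to $G_i$ trivially, as you verify. The paper instead replaces $G_i$ by the finite intersection $\bigcap_{j=1}^iG_j$, which makes the sets \emph{smaller}; there the inheritance of the separation property is less immediate and rests on the (unstated, but true) observation that the conclusion of Theorem~\ref{thm:main result} survives finite intersections of open exceptional sets, using that a sequence in the complement of $G\cap G'$ converging to a point of $G$ must eventually enter $G$. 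Your union construction sidesteps this point and is arguably the more transparent of the two. You also make explicit the connectedness argument that produces a point of $\partial^*E$ on $\gamma([t_0,t_1])$ when the curve avoids $G_i$, and you note that curves in $\Gamma$ are non-constant; the paper leaves both of these as asserted. All the steps are sound.
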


\begin{proof}
Let $u:=\chi_E$. For each $i\in\N$, let $G_i$ be an open set with 
$\capa_1(G_i)<2^{-i}$, given by Theorem~\ref{thm:main result}. By replacing
each $G_i$ with $\bigcap_{j=1}^iG_j$, if necessary, we may assume that for each $i\in\N$, $G_{i+1}\subset G_i$.
Let $\gamma\in\Gamma$. If there exists $i\in\N$ such that $\gamma$ does not intersect $G_i$, then necessarily $\gamma([t_0,t_1])\cap\partial^*E\neq\emptyset$
according to~\eqref{eq:topological and measure theoretic boundary}. We conclude that $\gamma$ intersects 
each set $G_i$, that is, $\Gamma\subset\widehat{\Gamma}$, and $\Mod_1(\widehat{\Gamma})=0$ by Lemma~\ref{lem:curves}.
\end{proof}

Now we prove the following result that partially generalizes Federer's structure theorem to the metric setting. 

\begin{theorem}
Let $E\subset X$ be bounded and $\mu$-measurable. Then $E$ is of finite perimeter if and only if $\mathcal{H}(\partial^*E)$ is finite and
$E$ satisfies the conclusion of Corollary~\ref{cor:Fed1}. 
\end{theorem}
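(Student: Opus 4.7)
This is immediate. If $E$ has finite perimeter, then \eqref{eq:def of theta} together with $\theta_E\ge\alpha>0$ gives $\mathcal{H}(\partial^*E)\le P(E,X)/\alpha<\infty$, and the curve condition is exactly Corollary~\ref{cor:Fed1}.

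\paragraph{Backward direction.} Assuming $\mathcal H(\partial^*E)<\infty$ and the curve condition of Corollary~\ref{cor:Fed1}, the plan is to produce locally Lipschitz approximations of $\chi_E$ whose upper-gradient integrals are controlled by $C\,\mathcal H(\partial^*E)$. Since Lebesgue's differentiation theorem forces $\mu(\partial^*E)=0$, one may assume $E=\mathcal I(E)$. For each $k\in\N$ I choose a cover $\{B_{k,i}=B(x_{k,i},r_{k,i})\}_i$ of $\partial^*E$ with $r_{k,i}\le 1/k$ and $\sum_i\mu(B_{k,i})/r_{k,i}\le 2\mathcal H(\partial^*E)$ (by the definition of the codimension-$1$ Hausdorff measure), and set $V_k:=X\setminus\bigcup_i\overline{B_{k,i}}$, an open set containing no point of $\partial^*E$. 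The key claim is that on each connected component $C$ of $V_k$ the function $\chi_E|_C$ is $\mu$-a.e. equal to a single constant $c_C\in\{0,1\}$; granting this, define
\[
\Psi_k(x):=\min\Bigl(1,\sum_i\max\bigl(0,1-\dist(x,B_{k,i})/r_{k,i}\bigr)\Bigr)
\]
and
\[
u_k(x):=\begin{cases}(1-\Psi_k(x))\,c_C+\tfrac12\Psi_k(x), & x\in C\subset V_k,\\ \tfrac12, & x\in\bigcup_i\overline{B_{k,i}}.\end{cases}
\]
Then $u_k$ is locally Lipschitz on $X$ with upper gradient $g_{u_k}\le C\sum_i\chi_{2B_{k,i}}/r_{k,i}$, hence $\int_X g_{u_k}\,d\mu\le C'\mathcal H(\partial^*E)$, and $u_k\to\chi_E$ in $L^1(X)$ since $\mu(\bigcup_i 2B_{k,i})\le Ck^{-1}\mathcal H(\partial^*E)\to 0$. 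The relaxation definition of the total variation then gives $\chi_E\in\BV(X)$ with $P(E,X)\le C'\mathcal H(\partial^*E)<\infty$.

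\paragraph{Main obstacle.} The crux is the key claim. Since any rectifiable curve $\gamma\subset C$ avoids $\partial^*E$, the curve condition forces the family of such curves having endpoints in different density classes of $E$ to lie in the $1$-modulus-zero family $\Gamma$ of Corollary~\ref{cor:Fed1}; consequently $g\equiv 0$ is a $1$-weak upper gradient of $\chi_E|_C$ on $C$, placing $\chi_E|_C\in N^{1,1}(C)$. The standard chaining argument using the $(1,1)$-Poincar\'e inequality then yields that any $N^{1,1}$ function with vanishing upper gradient on the connected open set $C$ must be $\mu$-a.e. constant. A secondary technical point is the verification that $u_k$ is locally Lipschitz across boundary regions between distinct components of $V_k$; this is handled by noting that any transition between two components is forced through some $\overline{B_{k,i}}$ where $u_k\equiv 1/2$, so the jump is absorbed into the $C/r_{k,i}$ Lipschitz oscillation of $\Psi_k$ on $2B_{k,i}$, and a direct computation gives $|u_k(x)-u_k(y)|\le\tfrac12|\Psi_k(x)-\Psi_k(y)|$ within each component.
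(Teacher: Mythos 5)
Your forward direction is correct and identical to the paper's. For the backward direction you take a genuinely different route. The paper defines $u_\eps$ directly: it sets $u_\eps\equiv 1$ on $\mathcal{I}(E)$ and $u_\eps=\min\{1,\sum_i u_i\}$ elsewhere (each $u_i$ a Lipschitz bump supported in $2\overline{B_i}$), and then checks by a short case analysis --- using that $1$-a.e.\ curve from $\mathcal{I}(E)$ to $\mathcal{E}(E)$ must meet $\partial^*E\subset\bigcup_i B_i$ --- that $g_\eps=\sum_i r_i^{-1}\chi_{2B_i}$ is a $1$-weak upper gradient of $u_\eps$. You instead decompose the complement of the cover into connected components, use the modulus-zero curve family and a Poincar\'e chaining argument to show $\chi_E$ is a.e.\ constant on each component, and then interpolate between the component constants and $1/2$. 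Your constancy observation is a legitimate alternative use of the curve hypothesis, but it is a longer route to a weaker intermediate statement than the paper needs, since the paper simply reads off the value $1$ directly from $\mathcal{I}(E)$.

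There are, however, two genuine gaps in the construction. First, $V_k=X\setminus\bigcup_i\overline{B_{k,i}}$ need not be open: a countable union of closed balls with radii tending to zero need not be a closed set, so ``connected component'' is ill-defined as an open set, and the chaining/Poincar\'e argument you invoke for constancy requires open connected sets. You would need to replace $V_k$ by, e.g., $X\setminus\overline{\bigcup_i B_{k,i}}$ and re-check that $\Psi_k\equiv 1$ on the new boundary. Second and more seriously, $\Psi_k$ (hence $u_k$) need not be locally Lipschitz: if infinitely many covering balls $2B_{k,i}$ with radii tending to $0$ accumulate at a point $x_0\in V_k$, the local Lipschitz constant of $\Psi_k$ near $x_0$ blows up like $\sup_i 1/r_{k,i}$ over the nearby indices. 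Nothing in the hypothesis $\sum_i\mu(B_{k,i})/r_{k,i}<\infty$ prevents this. Since the relaxation defining $\|D\chi_E\|(X)$ uses only $\Lip_{\loc}$ competitors, the step ``$u_k\in\Lip_{\loc}(X)$, hence $P(E,X)\le\liminf\int g_{u_k}$'' does not go through as written. The fix is to drop the local Lipschitz claim and instead verify, exactly as the paper does, that $g_{u_k}=C\sum_i r_{k,i}^{-1}\chi_{2B_{k,i}}$ is a $1$-\emph{weak} upper gradient of $u_k$ (your ``absorbed through a ball where $u_k\equiv 1/2$'' observation is precisely this verification along curves), conclude $u_k\in N^{1,1}(X)$ with $\|g_{u_k}\|_{L^1}\le C\,\mathcal{H}(\partial^*E)$, and then appeal to $N^{1,1}(X)\subset\BV(X)$ together with weak compactness or lower semicontinuity of the total variation under $L^1_{\loc}$-convergence. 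Once you make that switch, though, the component-constancy detour is no longer needed, and you are essentially back to the paper's argument.
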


\begin{proof}
 One part of the claim follows directly from Corollary~\ref{cor:Fed1}. Thus it suffices to prove that if $E$ satisfies the
 conclusion of Corollary~\ref{cor:Fed1} and $\mathcal{H}(\partial^*E)<\infty$, then $E$ is of finite perimeter. To do so, it suffices
 to find an $L^1$-approximation of $\chi_E$ with $L^1$-bounded weak upper gradients.
 
 Since $\mathcal{H}(\partial^*E)<\infty$, for each $\eps>0$ we can find a cover of $\partial^*E$ by balls 
$B_i=B(x_i,r_i)$, $i\in\N$, with radius no more than $\eps$, such that 
 \[
   \sum_{i\in\N}\frac{\mu(B_i)}{r_i}\le \mathcal{H}(\partial^*E)+\eps.
 \]
 For each ball $B_i$ in the cover, we fix a $1/r_i$-Lipschitz function
 $u_i$ such that $0\le u_i\le 1$ on $X$, $u_i=1$ on $B_i$, and the support of $u_i$ is contained in $2\overline{B_i}$. Now let
 \[
   u_\eps(x):=\begin{cases} 1&\text{ if }x\in\mathcal{I}(E),\\
                                           \min\{1, \sum_{i\in\N} u_i(x)\} &\text{ otherwise.} \end{cases}
 \]
 Furthermore, let $v_\eps(x):=\min\{1,\sum_{i\in\N}u_i(x)\}$.
 Note that because $E$ is bounded, $u_\eps\in L^1(X)$. Set 
 \[
  g_\eps:=\sum_{i\in\N}\frac{1}{r_i}\chi_{2B_i}.
 \]
 Clearly $g_{\eps}$ is an upper gradient of $v_{\eps}$. We will show that $g_{\eps}$ is an upper gradient of $u_{\eps}$ as well.
Take a curve $\gamma\notin \Gamma$ with end points $x,y$, where $\Gamma$ was defined in Corollary~\ref{cor:Fed1}.
 If $x,y\in X\setminus\mathcal{I}(E)$, then
 \[
 |u_\eps(x)-u_\eps(y)|=|v_\eps(x)-v_\eps(y)|\le \int_\gamma g_\eps\, ds.
 \]
 If the end points $x,y$ both lie in $\mathcal{I}(E)$,
 then $u_\eps(x)=u_\eps(y)$, and hence the upper gradient inequality
 \begin{equation}\label{eq:checking the upper gradient inequality}
 |u_\eps(x)-u_\eps(y)|\le \int_\gamma g_\eps\, ds
 \end{equation}
 is satisfied.
 If $x\in\mathcal{I}(E)$ and 
 $y\in X\setminus[\mathcal{I}(E)\cup\bigcup_{i\in\N} 2B_i]\subset\mathcal{E}(E)$, 
 then $|u_\eps(x)-u_\eps(y)|=1$, and since $\gamma\notin\Gamma$, the curve $\gamma$ intersects $\partial^*E$, and so it intersects $B_j$ for some $j$ and also
 intersects the complement of $2B_j$. Thus
 \[
 \int_\gamma g_\eps\, ds\ge \frac{1}{r_j}\int_{\gamma}\chi_{2B_j}\,ds\ge \frac{r_j}{r_j}\ge 1.
 \]
So again the pair $u_\eps,g_\eps$ satisfies the upper gradient inequality~\eqref{eq:checking the upper gradient inequality}.

 Finally, if $x\in\mathcal{I}(E)$ and $y\in \bigcup_{i\in\N}2B_i\setminus\mathcal{I}(E)$, again since $\gamma\notin\Gamma$, there is some $t_0\in[0,\ell_{\gamma}]$ such that 
$\gamma(t_0)\in\partial^*E$, and thus $\gamma(t_0)\in B_j$ for some $j\in\N$. Note that $u_\eps(x)=u_\eps(\gamma(0))=1$,
 $u_\eps(\gamma(t_0))=v_\eps(\gamma(t_0))=1$, and $u_\eps(y)=v_\eps(y)$.
  It follows that
 \begin{align*}
 |u_\eps(x)-u_\eps(y)|&\le |u_\eps(\gamma(0))-u_\eps(\gamma(t_0))|+|u_\eps(\gamma(t_0))-u_\eps(\gamma(\ell_{\gamma}))|\\
   &=|v_\eps(\gamma(t_0))-v_\eps(\gamma(\ell_{\gamma}))|\le \int_\gamma g_\eps\, ds.
 \end{align*}
 Thus in all cases the pair $u_\eps,g_\eps$ satisfies the upper gradient inequality for
 $1$-almost every
 curve in $X$. Furthermore, 
 \[
 \int_X g_\eps\, d\mu\le \sum_{i\in\N}\frac{\mu(2B_i)}{r_i}\le C_d\sum_{i\in\N}\frac{\mu(B_i)}{r_i}
   \le C_d( \mathcal{H}(\partial^*E)+\eps)<\infty. 
 \]
 It follows that for $0<\eps\le 1$, $u_\eps\in N^{1,1}(X)$ with $1$-weak upper gradients $g_\eps$ with a bounded $L^1$-norm. Moreover, 
 \begin{align*}
   \int_X|u_\eps-\chi_E|\, d\mu
   &\le \int_X \chi_{\bigcup_{i\in\N}2B_i}\, d\mu\le \sum_{i\in\N}\mu(2B_i)\le \eps\sum_{i\in\N} \frac{\mu(2B_i)}{r_i}\\
   &\le \eps (\mathcal{H}(\partial^*E)+1)\to 0
 \end{align*}
 as $\eps\to 0$.
 It follows that $u_\eps\to \chi_E$ in $L^1(X)$, and thus $\chi_E\in\BV(X)$, that is, $E$ is of finite perimeter.
\end{proof}

\section{Strong quasicontinuity}

It is known that if the measure on a metric measure space $X$ is doubling and supports a 
$(1,p)$-Poincar\'e inequality
for some $1\le p<\infty$, then Lipschitz functions are dense in $N^{1,p}(X)$, see for 
example~\cite[Theorem~8.2.1]{HKST}. Similarly, Lemma~\ref{lem:density of continuous functions} shows 
that continuous functions are dense in the space of $\BV$ functions with a $\mathcal H$-negligible jump set. 
On the other hand, from Proposition~\ref{prop:quasicontinuity without jump part} we know that the restrictions of $\BV$ functions 
with a $\mathcal H$-negligible jump set outside sets of small capacity are continuous, just like the restrictions of Newton-Sobolev functions.

The concept of \emph{strong quasicontinuity} essentially combines these two results: it involves a Lusin-type 
approximation of a function $u$ by a continuous function that approximates $u$ simultaneously in the BV (or Newton-Sobolev) 
norm~\emph{and} outside a set of small capacity.
In~\cite[Theorem 7.1]{KKST} such a Lusin-type approximation result for Newton-Sobolev functions was given. 
Here we show strong quasicontinuity for $\BV$ functions with a $\mathcal H$-negligible jump set. Note that such $\BV$ functions
need not be in the Newton-Sobolev class, since the Cantor part of their variation measure need not be zero. 

\begin{lemma}\label{lem:uniform-fracMaxgradmeas}
Let $u\in\BV(X)$ with $\mathcal H(S_u)=0$, and let $\eps>0$. Then there exists 
$G\subset X$ with $\capa_1(G)<\eps$ such that
\[
r\frac{\Vert Du\Vert(B(x,r))}{\mu(B(x,r))}\to 0\qquad\textrm{as }r\to 0
\]
uniformly in $X\setminus G$.
\end{lemma}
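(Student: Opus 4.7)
The plan is to combine a weak-type capacitary inequality for a ``fractional maximal function'' of the variation measure with the density of continuous $\BV$ functions from Lemma~\ref{lem:density of continuous functions}.

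First I would establish the estimate
\[
\capa_1\Big(\Big\{x\in X:\sup_{0<r\le R}r\frac{\Vert Dv\Vert(B(x,r))}{\mu(B(x,r))}>t\Big\}\Big)\le\frac{C\Vert Dv\Vert(X)}{t},
\]
valid for every $v\in\BV(X)$, $t>0$ and $R\in(0,1]$, by a standard $5$-covering argument combined with the bound $\capa_1\le 2C_d\mathcal H_1$ from Remark~\ref{rmk:capacities etc}: at each point $x$ of the super-level set there is a radius $r_x\le R$ with $\mu(B(x,r_x))/r_x<t^{-1}\Vert Dv\Vert(B(x,r_x))$, and after extracting a disjoint subcollection whose $5$-dilates cover the set, summing the estimate gives the bound. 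The same argument applied to the Radon measure $g\,d\mu$ yields an analogous capacity estimate for the fractional maximal function of any $g\in L^1(X)$, in terms of $t^{-1}\Vert g\Vert_{L^1(X)}$.

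Next I would use Lemma~\ref{lem:density of continuous functions} to choose a sequence $w_k\in\BV(X)\cap C(X)$ with $w_k\to u$ in $\BV(X)$; after passing to a subsequence and choosing the compact sets $H_k$ in that proof sufficiently carefully, I may assume that $\Vert D(u-w_k)\Vert(X)\le 4^{-k}$ and $\Vert Du\Vert(X\setminus H_k)\le 4^{-k}$, that $w_k=u$ on $H_k$, and that on $X\setminus H_k$ the function $\widetilde{w_k}$ belongs to $N^{1,1}(X\setminus H_k)\cap\liploc(X\setminus H_k)$ with an $L^1$ upper gradient $g_k$ satisfying $\Vert g_k\Vert_{L^1(X)}\le C\,4^{-k}$. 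Applying the two capacity estimates at level $t=2^{-k}$ to the measure $\Vert D(u-w_k)\Vert$ and to the function $g_k$ yields open sets $G_k',G_k''$, each of $\capa_1$-measure at most $C\,2^{-k}$, outside of which the corresponding fractional suprema over $0<r\le 1$ are bounded by $2^{-k}$. Setting $G:=\bigcup_{k\ge K}(G_k'\cup G_k'')$ for $K$ large enough gives $\capa_1(G)<\eps$.

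For $x\in X\setminus G$ and $r\in(0,1]$ with $B(x,r)\cap H_k=\emptyset$, the upper gradient bound $\Vert Dw_k\Vert\le g_k\,d\mu$ on $X\setminus H_k$ and the triangle inequality give
\[
r\frac{\Vert Du\Vert(B(x,r))}{\mu(B(x,r))}\le r\frac{\Vert D(u-w_k)\Vert(B(x,r))}{\mu(B(x,r))}+r\vint{B(x,r)}g_k\,d\mu\le 2\cdot 2^{-k}
\]
for all $k\ge K$, so letting $k\to\infty$ forces the desired uniform decay on the ``far from $H_k$'' part of the quantity. The main obstacle will be handling balls meeting $H_k$, since there $\Vert Dw_k\Vert$ coincides with $\Vert Du\Vert|_{H_k}$ and can inherit a possibly nontrivial Cantor component of $\Vert Du\Vert$. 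I expect to resolve this by using the uniform Lebesgue-point convergence on $H_k$ built into the Lusin/Egorov step of Lemma~\ref{lem:density of continuous functions}, together with a localized version of the $5$-covering argument above applied to $\Vert Du\Vert|_{H_k}$, which absorbs the remaining bad points into $G$ at the cost of further shrinking the scale at which uniform smallness holds.
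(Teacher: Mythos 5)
Your first two steps are sound: the weak-type capacitary estimate for the fractional supremum $\sup_{0<r\le R} r\,\nu(B(x,r))/\mu(B(x,r))$ of a finite Radon measure $\nu$ (applied to $\nu=\Vert D(u-w_k)\Vert$ and to $\nu=g_k\,d\mu$) does follow from the $5$-covering argument exactly as you describe, and the density Lemma~\ref{lem:density of continuous functions} together with Corollary~\ref{cor:mollifying in an open set} does provide $w_k$, $H_k$, and $g_k$ with the stated bounds. The case $B(x,r)\cap H_k=\emptyset$ is then handled correctly. However, the final paragraph, where you deal with balls meeting $H_k$, is a genuine gap, not a deferred routine computation.

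The quantity you ultimately need to control for $x$ near $H_k$ is $r\,\Vert Du\Vert(B(x,r))/\mu(B(x,r))$, and the only information carried over from the Lusin/Egorov step of Lemma~\ref{lem:density of continuous functions} is uniform convergence of $\vint{B(x,r)}|u-\widetilde{u}(x)|\,d\mu\to0$ on $H_k$. The Poincar\'e inequality bounds oscillation of $u$ \emph{by} $r\,\Vert Du\Vert/\mu$, not the other way around, so uniform smallness of the oscillation gives no control of the variation density. Likewise, the ``localized $5$-covering argument applied to $\Vert Du\Vert|_{H_k}$'' only yields a weak-type bound $\capa_1(\{\cdots>t\})\le C\,\Vert Du\Vert(H_k)/t$, and $\Vert Du\Vert(H_k)$ is comparable to $\Vert Du\Vert(X)$; this cannot be absorbed into $G$ at any fixed $t$. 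What is missing is the core fact of the paper's proof: using the density comparison theorem (\cite[Theorem~2.4.3]{AT}) together with the hypothesis $\mathcal H(S_u)=0$ (hence $\Vert Du\Vert^j=0$ and, by \cite[Theorem~5.3]{AMP}, $\Vert Du\Vert(F)=0$ for every $\mathcal H$-finite $F$), one shows directly that
\[
\mathcal H\left(\left\{x\in X:\,\limsup_{r\to0^+}r\,\frac{\Vert Du\Vert(B(x,r))}{\mu(B(x,r))}>0\right\}\right)=0,
\]
that is, the a.e.\ pointwise decay of the fractional density. Only after that can Egorov be applied to \emph{this} quantity (not to Lebesgue-point averages of $u$) to obtain compact sets $H_i$ on which the decay is uniform at a definite rate, which is exactly what allows a ball meeting $H_i$ at a point $y$ to be controlled via the doubling comparison $r\,\Vert Du\Vert(B(x,r))/\mu(B(x,r))\le C_d^2\,r\,\Vert Du\Vert(B(y,2r))/\mu(B(y,2r))$. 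Your proposal never establishes this a.e.\ decay and therefore the absolutely continuous and Cantor components of $\Vert Du\Vert$ on the sets $H_k$ remain uncontrolled; the argument does not close.
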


\begin{proof}
Given $\delta>0$, let
\[
A:=\left\{x\in X:\, \limsup_{r\to 0^+}r\frac{\Vert Du\Vert(B(x,r))}{\mu(B(x,r))}
\ge \delta\right\}.
\]
By~\cite[Theorem 2.4.3]{AT}, 
we know that $\Vert Du\Vert(A)\ge \delta \mathcal H(A)$. 
Now by~\cite[Theorem~5.3]{AMP}, since $\Vert Du\Vert^j(X)=0$,
we have $\Vert Du\Vert(F)=0$ for any $F$ with $\mathcal H(F)<\infty$, 
and so we must have $\Vert Du\Vert(A)=\mathcal H(A)=0$. 
It follows that
\[
  \mathcal{H}\left(\bigg\lbrace x\in X :\, \limsup_{r\to 0^+}r\frac{\Vert Du\Vert(B(x,r))}{\mu(B(x,r))}>0\bigg\rbrace\right)=0.
\]

By Egorov's theorem, we can pick compact sets $H_1\subset H_2\subset \ldots$ and radii
$1/5\ge r_1\ge r_2\ge \ldots> 0$ such that $\Vert Du \Vert(X\setminus H_i)<2^{-i}\eps$, and
\[
r\frac{\Vert Du\Vert(B(x,r))}{\mu(B(x,r))}\le \frac{1}{C_d^2 i}
\]
for all $x\in H_i$ and $r\in (0,2r_i]$. Then define for $i\in\N$
\begin{align*}
&G_i:=\\
&\left\{x\in X\setminus H_i:\exists r\in (0,r_i]\ \textrm{s.t. }B(x,r)\subset X\setminus H_i\  
  \textrm{and}\  r\frac{\Vert Du\Vert(B(x,r))}{\mu(B(x,r))}> \frac 1i\right\}.
\end{align*}
Now we show that for all $x\in X\setminus G_i$ and $r\in (0,r_i]$,
\begin{equation}\label{eq:uniform convergence first step}
r\frac{\Vert Du\Vert(B(x,r))}{\mu(B(x,r))}\le \frac 1i.
\end{equation}
The only case that needs to be checked is when $x\in X\setminus (H_i\cup G_i)$ and 
$B(x,r)\cap H_i\neq \emptyset$ for some $r\in (0,r_i]$. Then for any point $y\in B(x,r)\cap H_i$, we have
\[
r\frac{\Vert Du\Vert(B(x,r))}{\mu(B(x,r))}\le C_d^2 r\frac{\Vert Du\Vert(B(y,2r))}{\mu(B(y,2r))}\le \frac 1i
\]
by the definition of the sets $H_i$.

Fix $i\in\N$. From the definition of $G_i$ we get a covering $\{B(x,r(x))\}_{x\in G_i}$ of $G_i$, and by the $5$-covering 
theorem, we obtain a countable collection of disjoint balls $\{B(x_j,r_j)\}_{j\in\N}$ such that the balls $B(x_j,5r_j)$ cover $G_i$.
Thus
\begin{align*}
\frac {1}{2C_d} \capa_1(G_i)\le \mathcal H_{1}(G_i)
&\le \sum_{j\in\N}\frac{\mu(B(x_j,5r_j))}{5r_j}
\le C_d^3\sum_{j\in\N}\frac{\mu(B(x_j,r_j))}{r_j}\\
&\le C_d^3 i\sum_{j\in\N}\Vert Du\Vert(B(x_j,r_j))
\le C_d^3 i\Vert Du\Vert(X\setminus H_i).
\end{align*}
Let $G:=\bigcup_{i\in\N}G_i$, so that
\[
\capa_1(G)\le \sum_{i\in\N}\capa_1(G_i)\le C\sum_{i\in\N}i\Vert Du\Vert(X\setminus H_i)\le C\sum_{i\in\N}i 2^{-i}\eps \le C\eps.
\]
Moreover, by~\eqref{eq:uniform convergence first step}, for every $x\in X\setminus G$, $i\in \N$, and $r\in (0,r_i]$ 
we have
\[
r\frac{\Vert Du\Vert(B(x,r))}{\mu(B(x,r))}\le \frac 1i.
\]
\end{proof}

\begin{proposition}\label{prop:uniform Lebesgue points without jump set}
Let $u\in\BV(X)$ with $\mathcal H(S_u)=0$, and let $\eps>0$. Then there exists
$G\subset X$ with $\capa_1(G)<\eps$ such that
\[
\vint{B(x,r)}|u-\widetilde{u}(x)|\,d\mu\to 0\qquad\textrm{as }r\to 0
\]
locally uniformly in the set $X\setminus G$.
\end{proposition}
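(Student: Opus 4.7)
The plan is to deduce this from Lemma~\ref{lem:density of continuous functions}, Lemma~\ref{lem:from BV to uniform convergence}, and Lemma~\ref{lem:capacity level set maximal function}. The underlying idea is that a sufficiently good continuous approximation $w_i$ of $u$ automatically satisfies $\vint{B(x,r)}|w_i-\widetilde{w_i}(x)|\,d\mu\to 0$ locally uniformly, simply by continuity of $\widetilde{w_i}$; the discrepancy between $u$ and $w_i$ is controlled in the $L^1$-average sense by the restricted maximal function, which is small outside a set of small $1$-capacity; and $\widetilde{w_i}(x)$ may be replaced by $\widetilde u(x)$ outside another small capacity set by Lemma~\ref{lem:from BV to uniform convergence}.

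First, by Lemma~\ref{lem:density of continuous functions}, pick $w_i\in\BV(X)\cap C(X)$ with $\widetilde{w_i}$ continuous on $X$, and, after passing to a subsequence, $\Vert w_i-u\Vert_{\BV(X)}\le 16^{-i}\eps$. By Lemma~\ref{lem:from BV to uniform convergence} (and passing to a further subsequence, which preserves the BV-decay rate) there exists $F\subset X$ with $\capa_1(F)<\eps/3$ and $|\widetilde{w_i}-\widetilde u|\le 2^{-i}$ on $X\setminus F$ for every $i\in\N$. Lemma~\ref{lem:capacity level set maximal function} applied with $R=1$ and $t=2^{-i}$ gives
\[
F_i:=\{x\in X:\,\mathcal{M}_1(u-w_i)(x)\ge 2^{-i}\},\qquad \capa_1(F_i)\le C\cdot 2^i\,\Vert u-w_i\Vert_{\BV(X)}\le C\cdot 8^{-i}\eps,
\]
so $\capa_1(\bigcup_{i\in\N}F_i)\le C\eps$. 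Setting $G:=F\cup\bigcup_{i\in\N}F_i$ and reabsorbing the harmless constant into $\eps$, we obtain $\capa_1(G)<\eps$; by Remark~\ref{rmk:capacities etc} we may take $G$ open.

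To verify locally uniform convergence, fix a compact $K\subset X\setminus G$ and $\eta>0$. Choose $i$ with $2^{1-i}<\eta/2$. Since $\widetilde{w_i}$ is continuous on all of $X$, it is uniformly continuous on some compact neighborhood of $K$, so there is $r_0\in(0,1]$ with $|\widetilde{w_i}(y)-\widetilde{w_i}(x)|<\eta/4$ whenever $x\in K$ and $y\in B(x,r_0)$. Using that $w_i=\widetilde{w_i}$ $\mu$-almost everywhere, for $x\in K$ and $r\in(0,r_0]$ we split
\begin{align*}
\vint{B(x,r)}|u-\widetilde u(x)|\,d\mu
&\le \vint{B(x,r)}|u-w_i|\,d\mu + \vint{B(x,r)}|\widetilde{w_i}(y)-\widetilde{w_i}(x)|\,d\mu(y)\\
&\qquad + |\widetilde{w_i}(x)-\widetilde u(x)|\\
&\le \mathcal{M}_1(u-w_i)(x) + \eta/4 + 2^{-i}\\
&<\, 2^{1-i}+\eta/4\;<\;\eta,
\end{align*}
where we used $x\notin F_i\cup F$. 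This yields uniform convergence on $K$, and hence the stated locally uniform convergence on $X\setminus G$.

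The argument is essentially a diagonalization, and I do not anticipate any deep obstacle. The delicate point is only the bookkeeping: one must calibrate the three quantitative rates simultaneously, ensuring that $\capa_1(F_i)$ is summable (which forces $\Vert w_i-u\Vert_{\BV}$ to decay fast compared to $2^{-i}$) while also extracting a subsequence on which $\widetilde{w_i}\to\widetilde u$ at rate $2^{-i}$ off a set of capacity less than $\eps/3$.
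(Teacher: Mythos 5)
Your proof is correct, and it takes a genuinely simpler route than the paper's. The paper first applies the Poincar\'e inequality to split
\[
\vint{B(x,r)}|u-\widetilde{u}(x)|\,d\mu\le \vint{B(x,r)}|u-u_{B(x,r)}|\,d\mu+|u_{B(x,r)}-\widetilde{u}(x)|
\le Cr\frac{\Vert Du\Vert(B(x,\lambda r))}{\mu(B(x,\lambda r))}+|u_{B(x,r)}-\widetilde{u}(x)|,
\]
controls the first term by a separate, dedicated statement (Lemma~\ref{lem:uniform-fracMaxgradmeas}, whose proof requires the absolute continuity $\Vert Du\Vert\ll\mathcal H$, an Egorov argument, and a covering estimate), and then controls the second term $|u_{B(x,r)}-\widetilde u(x)|$ by the triangle inequality through the continuous approximants $u_i$ and $(u_i)_{B(x,r)}$. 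You instead apply the triangle inequality directly to the integrand, splitting $\vint_{B(x,r)}|u-\widetilde u(x)|\,d\mu$ through $w_i$ and $\widetilde{w_i}(x)$ in one step, controlling the three resulting terms by the restricted maximal function, by local uniform continuity of $\widetilde{w_i}$, and by Lemma~\ref{lem:from BV to uniform convergence} respectively. Since the mean $u_{B(x,r)}$ is never introduced, the variation-density term $r\Vert Du\Vert(B(x,\lambda r))/\mu(B(x,\lambda r))$ never appears, and Lemma~\ref{lem:uniform-fracMaxgradmeas} (together with its Poincar\'e-inequality input) becomes unnecessary for this proposition. Both proofs rely on the same three lemmas (density of $\BV(X)\cap C(X)$, the maximal-function capacity estimate, and the uniform $\BV$-to-pointwise convergence), and both yield only locally uniform rather than uniform convergence, for the same reason: the modulus of continuity of the continuous approximant $\widetilde{w_i}$ is only locally uniform on the proper space $X$. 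Your bookkeeping of the rates ($16^{-i}$ for the $\BV$-norm error so that $\capa_1(F_i)\lesssim 8^{-i}\eps$ is summable) is sound; one minor stylistic point is that you do not actually need a quantitative rate $2^{-i}$ from Lemma~\ref{lem:from BV to uniform convergence} --- uniform convergence of $\widetilde{w_i}\to\widetilde u$ off $F$ suffices --- but insisting on the rate causes no harm.
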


\begin{proof}
Our proof largely follows corresponding proofs concerning Lebesgue points of Newton-Sobolev functions, see 
e.g.~\cite[Theorem 4.1]{KKST3}. First note that
\begin{align*}
\vint{B(x,r)}|u-\widetilde{u}(x)|\,d\mu
&\le \vint{B(x,r)}|u-u_{B(x,r)}|\,d\mu+|u_{B(x,r)}-\widetilde{u}(x)|\\
&\le Cr\frac{\Vert Du\Vert(B(x,\lambda r))}{\mu(B(x,\lambda r))}+|u_{B(x,r)}-\widetilde{u}(x)|.
\end{align*}
The first term converges uniformly to zero as $r\to 0$ outside a set $F$ with
$\capa_1(F)<\eps/2$ by Lemma~\ref{lem:uniform-fracMaxgradmeas}. 
So we only need to consider the second term. By Lemma~\ref{lem:density of continuous functions}, there 
is a sequence $u_i\in \BV(X)\cap C(X)$ with 
\[
\Vert u_i-u\Vert_{\BV(X)}\le \frac{2^{-2i-2}\gamma}{C_1}\eps,
\]
where $C_1$ is the constant from Lemma~\ref{lem:capacity level set maximal function}, corresponding to the choice $R=1$.
For $i\in\N$, let
\[
G_i:=\{x\in X:\, \max \{|u_i(x)-\widetilde{u}(x)|,\ \mathcal M_1 (u_i-u)(x)\}> 2^{-i}\}.
\]
By Lemma~\ref{lem:capacity level set maximal function} and the proof of Lemma~\ref{lem:from BV to uniform convergence}, 
$\capa_1(G_i)\le 2^{-i-1}\eps$. Define $G:=\bigcup_{i\in\N} G_i\cup F$, so that $\capa_1(G)< \eps$. Now for 
$x\in X\setminus G$ and $r\in (0,1]$,
\begin{align*}
|u_{B(x,r)}-\widetilde{u}(x)|&\le |u_{B(x,r)}-(u_i)_{B(x,r)}|+|(u_i)_{B(x,r)}-u_i(x)|
+|u_i(x)-\widetilde{u}(x)|\\
&\le \mathcal M_1(u_i-u)(x)+|(u_i)_{B(x,r)}-u_i(x)|+|u_i(x)-\widetilde{u}(x)|\\
&\le 2^{-i}+|(u_i)_{B(x,r)}-u_i(x)|+2^{-i}\\
&\le 2^{-i+1}+\vint{B(x,r)}|u_i-u_i(x)|\,d\mu.
\end{align*}
Fix a ball $B(z,\widetilde{r})$, and $\delta >0$. Picking $i$ sufficiently large, the first term above is 
less than $\delta/2$. Then the corresponding function $u_i$ is, as a continuous function, locally uniformly 
continuous, so that it is uniformly continuous in $B(z,\widetilde{r}+1)$. Thus we can pick $r>0$ small 
enough that the second term is less than
$\delta/2$ for every $x\in B(z,\widetilde{r})$. Since $\delta>0$ was arbitrary, this establishes local uniform convergence.
\end{proof}

\begin{theorem}\label{thm:strong quasicontinuity without jump part}
Let $u\in\BV(X)$ with $\mathcal H(S_u)=0$, and let $\eps>0$. Then there exists
an open set
$G\subset X$ with $\capa_1 (G)<\eps$, and $w\in \BV(X)\cap C(X)$ such that $w=\widetilde{u}$ in $X\setminus G$ and 
$\Vert w-u\Vert_{\BV(X)}<\eps$.
\end{theorem}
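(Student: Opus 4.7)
The plan is to refine the quasicontinuity outputs of Propositions~\ref{prop:quasicontinuity without jump part} and~\ref{prop:uniform Lebesgue points without jump set} by running a mollification step on the bad set (Corollary~\ref{cor:mollifying in an open set}), and then reading off continuity via Proposition~\ref{prop:uniform convergence and continuity}. The crux is to find a single open set $G\subset X$ of small $1$-capacity that simultaneously captures the discontinuity of $\widetilde u$, the failure of uniform Lebesgue-point convergence, \emph{and} carries only a small amount of the variation measure $\Vert Du\Vert$. The first two properties are already supplied by the cited propositions; the real issue is that they are capacity-based, whereas the control of $\Vert D(w-u)\Vert(X)$ in the mollification will be expressed in terms of $\Vert Du\Vert(G)$.

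The key additional input I will establish is the quantitative absolute continuity of $\Vert Du\Vert$ with respect to $\capa_1$ under the hypothesis $\mathcal H(S_u)=0$: for every $\delta>0$ there exists $\eta>0$ such that $\capa_1(A)<\eta$ implies $\Vert Du\Vert(A)<\delta$. If this failed, there would be sets $A_n$ with $\capa_1(A_n)\le 2^{-n}$ and $\Vert Du\Vert(A_n)\ge\delta_0>0$; using the outer regularity of $\capa_1$ (Remark~\ref{rmk:capacities etc}) to replace them by open sets, the sets $V_k:=\bigcup_{n\ge k}A_n$ are open with $\capa_1(V_k)\to 0$, so $V:=\bigcap_k V_k$ has $\capa_1(V)=0$ and therefore $\mathcal H(V)=0$ by~\cite{HaKi}. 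Since $\Vert Du\Vert^j(X)=0$, the fact invoked in Lemma~\ref{lem:uniform-fracMaxgradmeas} (from~\cite[Theorem~5.3]{AMP}) forces $\Vert Du\Vert(V)=0$, contradicting the upper continuity of the finite measure $\Vert Du\Vert$ at $V$ since each $\Vert Du\Vert(V_k)\ge\delta_0$.

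Given this, let $C$ be the constant from Corollary~\ref{cor:mollifying in an open set} and pick $\eta>0$ so that $\capa_1(A)<\eta$ forces $\Vert Du\Vert(A)<\eps/(2(C+1))$. Combining Propositions~\ref{prop:quasicontinuity without jump part} and~\ref{prop:uniform Lebesgue points without jump set} (and using that $\capa_1$ is an outer capacity), I obtain an open $G\subset X$ with $\capa_1(G)<\min\{\eps,\eta\}$ such that $\widetilde u|_{X\setminus G}$ is continuous and the Lebesgue-point convergence $\vint{B(x,r)}|u-\widetilde u(x)|\,d\mu\to 0$ is locally uniform on $X\setminus G$. Applying Corollary~\ref{cor:mollifying in an open set} with $\Omega=X$, $U=G$, and $\kappa=\eps/2$ produces $w\in\BV(X)$ equal to $u$ on $X\setminus G$, with $\Vert w-u\Vert_{L^1(X)}\le\eps/2$ and $\Vert D(w-u)\Vert(X\setminus G)=0$. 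The upper-gradient bound in that corollary yields $\Vert Dw\Vert(G)\le C\Vert Du\Vert(G)$, so
\[
\Vert D(w-u)\Vert(X)=\Vert D(w-u)\Vert(G)\le (C+1)\Vert Du\Vert(G)<\eps/2,
\]
and hence $\Vert w-u\Vert_{\BV(X)}<\eps$.

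To finish, I apply Proposition~\ref{prop:uniform convergence and continuity} with $\Omega=X$ and $H=X\setminus G$: since $G$ is open, $H$ is closed, and the hypotheses on $\widetilde u|_H$ hold by construction, the proposition yields that $\widetilde w$ is continuous on $X$ and $\widetilde w(x)=\widetilde u(x)$ for every $x\in X\setminus G$. Replacing $w$ by its continuous representative $\widetilde w\in\BV(X)\cap C(X)$ gives the stated conclusion. The main obstacle in this plan is the quantitative absolute continuity of $\Vert Du\Vert$ with respect to $\capa_1$; once that is in hand, the remaining steps are a direct assembly of machinery already developed in the paper.
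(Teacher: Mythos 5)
Your proof is correct, and it assembles essentially the same machinery as the paper: Propositions~\ref{prop:quasicontinuity without jump part} and~\ref{prop:uniform Lebesgue points without jump set} to select the exceptional set, Corollary~\ref{cor:mollifying in an open set} to mollify inside it, and Proposition~\ref{prop:uniform convergence and continuity} to read off global continuity of the result. The genuine difference is how the smallness of $\Vert Du\Vert(G)$ is secured. The paper's proof takes a second set $F$ via Lusin's and Egorov's theorems with the reference measure $\Vert Du\Vert$ (so that $\Vert Du\Vert(F)<\eps$, with continuity and uniform Lebesgue convergence on $X\setminus F$), and then sets $G:=\widetilde{G}\cap F$; this implicitly invokes the gluing lemma for closed sets to transfer continuity and local uniform convergence from $X\setminus\widetilde{G}$ and $X\setminus F$ to their union $X\setminus G$. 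You instead establish a quantitative absolute continuity of $\Vert Du\Vert$ with respect to $\capa_1$ under the hypothesis $\mathcal H(S_u)=0$ (using the outer regularity and countable subadditivity of $\capa_1$, the vanishing of $\Vert Du\Vert$ on sets of finite $\mathcal H$-measure from~\cite[Theorem~5.3]{AMP}, and downward continuity of the finite measure $\Vert Du\Vert$), so that any sufficiently small-capacity $G$ automatically has small variation measure. This removes the need for the auxiliary $F$ and for the gluing step: your $G$ already lies in the complement of both exceptional sets delivered by the two propositions, so the hypotheses of Proposition~\ref{prop:uniform convergence and continuity} hold directly. The trade-off is that your argument requires proving an extra lemma, but it is a short measure-theoretic one, and your version makes the logical dependence on the hypothesis $\mathcal H(S_u)=0$ more transparent.
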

\begin{proof}
By Lusin's and Egorov's theorems, we can find
an open set
$F\subset X$ with $\Vert Du\Vert(F)<\eps$ such that
$\widetilde{u}|_{X\setminus F}$ is continuous and
\begin{equation}\label{eq:unif conv for Lusin type approx}
\vint{B(x,r)}|u-\widetilde{u}(x)|\,d\mu\to 0\qquad\textrm{as }r\to 0
\end{equation}
uniformly in the set $X\setminus F$.
By Theorem~\ref{thm:main result} and Proposition~\ref{prop:uniform Lebesgue points without jump set}
and the fact that $\capa_1$ is an outer capacity,
we can find
an open set
$\widetilde{G}\subset X$ with $\capa_1 (\widetilde{G})<\eps$ such that 
$\widetilde{u}|_{X\setminus \widetilde{G}}$ is continuous and the convergence in~\eqref{eq:unif conv for Lusin type approx} is
locally uniform in the set $X\setminus \widetilde{G}$. Defining $G:=\widetilde{G}\cap F$, we have $\capa_1(G)<\eps$ and 
$\Vert Du\Vert(G)<\eps$. Apply Corollary~\ref{cor:mollifying in an open set} with $U=G$ and $\kappa=\eps$ to obtain 
a function $w\in\BV(X)$ with $\Vert w-u\Vert_{\BV(X)}\le C\eps$. Then by 
Proposition~\ref{prop:uniform convergence and continuity}, $w\in C(X)$ and $w=\widetilde{u}$ in $X\setminus G$.
\end{proof}

We say that $X$ supports a \emph{strong relative isoperimetric inequality} if for every $\mu$-measurable set $E\subset X$,
$P(E,X)<\infty$ whenever $\mathcal{H}(\partial^*E)<\infty$, see the discussion in 
Section~\ref{sec:sets of finite perimeter} as well as~\cite{KKST} and~\cite{KLS} for more on this question.
In~\cite[Theorem 7.1]{KKST} the following Lusin-type approximation for Newton-Sobolev functions was given. The 
authors made the additional assumption 
that the space supports a strong relative isoperimetric inequality, which we can now remove.

\begin{corollary}\label{rmk:strong quasicontinuity}
Let $1\le p<\infty$, $u\in N^{1,p}(X)$, and $\eps>0$. Then there exists an open set $G\subset X$ and $w\in N^{1,p}(X)\cap C(X)$ such
that $\capa_p(G)<\eps$, $w=\widetilde{u}$ in $X\setminus G$, and $\Vert w-u\Vert_{N^{1,p}(X)}<\eps$.
\end{corollary}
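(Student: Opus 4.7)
The plan is to treat the cases $p=1$ and $p>1$ separately, with the $p=1$ case being the genuine new input of this paper and the $p>1$ case essentially reducing to the argument of \cite[Theorem 7.1]{KKST}.

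For $p=1$, I would first observe that any $u\in N^{1,1}(X)$ lies in $\BV(X)$ with $\|Du\|$ absolutely continuous with respect to $\mu$: the minimal $1$-weak upper gradient $g_u\in L^1(X)$ yields $\|Du\|\le g_u\,d\mu$, so the jump part $\|Du\|^j$ vanishes, and in view of the decomposition~\eqref{eq:decomposition} together with the lower bound $\theta_{\{u>t\}}\ge\alpha>0$ we must have $\mathcal{H}(S_u)=0$. This places $u$ in the range of applicability of Theorem~\ref{thm:strong quasicontinuity without jump part}. Applying that theorem with parameter $\eps'>0$ to be specified later produces an open set $G\subset X$ with $\capa_1(G)<\eps'$ and a function $w\in\BV(X)\cap C(X)$ such that $w=\widetilde{u}$ on $X\setminus G$ and $\|w-u\|_{\BV(X)}<\eps'$. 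Inspecting the proof of Theorem~\ref{thm:strong quasicontinuity without jump part}, the function $w$ is built via Corollary~\ref{cor:mollifying in an open set} applied with $U=G$, so $\widetilde{w}|_G\in N^{1,1}(G)\cap\liploc(G)$ with upper gradient $g$ satisfying $\|g\|_{L^1(G)}\le C\|Du\|(G)\le C\eps'$.

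To pass from a BV estimate to an $N^{1,1}$ estimate I would set
\[
\widehat{g}:=g\,\chi_G+g_u\,\chi_{X\setminus G}
\]
and argue that $\widehat{g}$ is a $1$-weak upper gradient of $w$ on $X$. The continuity of $w$ on all of $X$ takes care of the transitions across $\partial G$: along any rectifiable curve $\gamma$, splitting the total variation of $w\circ\gamma$ into the portions on $\gamma^{-1}(G)$ and $\gamma^{-1}(X\setminus G)$, the former is controlled by $\int g\,ds$ and the latter, for $1$-a.e.\ curve, by $\int g_u\,ds$, since $w=u$ on $X\setminus G$ outside a set of $\capa_1$ zero and such sets are avoided by $1$-a.e.\ curve. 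Consequently
\[
\|w-u\|_{N^{1,1}(X)}\le \|w-u\|_{L^1(X)}+\|\widehat{g}-g_u\|_{L^1(X)}\le\eps'+C\eps',
\]
which is less than $\eps$ once $\eps'$ is chosen small enough.

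For $p>1$, density of Lipschitz functions in $N^{1,p}(X)$ holds without a Poincar\'e inequality by~\cite{ACdM}, so the argument of \cite[Theorem 7.1]{KKST} can be carried out verbatim. I would choose Lipschitz $u_i$ with $\|u_i-u\|_{N^{1,p}(X)}\le 2^{-2i}\eps$, invoke the Chebyshev-type estimate $\capa_p(\{|u_i-u|^\vee>2^{-i}\})\lesssim 2^{ip}\|u_i-u\|_{N^{1,p}(X)}^p$ to extract a subsequence converging uniformly to $\widetilde{u}$ outside an open set $G$ with $\capa_p(G)<\eps/2$, and then replace $u$ inside $G$ by a suitable Lipschitz modification coming from the tail of the approximating sequence. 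No step of this construction uses the strong relative isoperimetric inequality, which was only needed in \cite{KKST} to handle the $p=1$ case.

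The main obstacle I anticipate is the gluing step in the $p=1$ case: one must verify that $\widehat{g}$ really is a $1$-weak upper gradient of $w$, i.e.\ that the upper gradient inequality holds along $1$-a.e.\ curve despite the piecewise definition. The two ingredients here are that $1$-a.e.\ curve avoids the $\capa_1$-null set on which $w$ and $u$ might differ outside $G$, and that $w$ is continuous so that the upper gradient inequality on $\gamma\cap\overline{G}$ and on $\gamma\setminus G$ can be summed across the common boundary without loss; both are standard facts about Newton-Sobolev functions but must be invoked carefully.
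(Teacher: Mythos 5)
Your overall strategy for $p=1$ matches the paper's: observe that $u\in N^{1,1}(X)$ forces $\mathcal H(S_u)=0$ (via the decomposition~\eqref{eq:decomposition} and the lower bound $\theta_{\{u>t\}}\ge\alpha$), then apply Theorem~\ref{thm:strong quasicontinuity without jump part}, and finally upgrade the $\BV$ estimate to an $N^{1,1}$ estimate. The paper does the last step in one line by citing the norm comparison $\|v\|_{N^{1,1}(X)}\le C\|v\|_{\BV(X)}$ from~\cite[Theorem~4.6]{HKLL}, whereas you attempt a direct construction of an upper gradient, and this is where a genuine gap appears.

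The display
\[
\|w-u\|_{N^{1,1}(X)}\le \|w-u\|_{L^1(X)}+\|\widehat{g}-g_u\|_{L^1(X)}
\]
is not justified. The $N^{1,1}$ norm of $w-u$ involves an infimum over upper gradients \emph{of $w-u$}, and $|\widehat{g}-g_u|=|g-g_u|\chi_G$ is in general not an upper gradient of $w-u$: it can be strictly smaller than the minimal one. For a concrete instance, take $X=\R$, $G=(0,1)$, $u(x)=x$, and $w(x)=2x^2-x$ on $[0,1]$ with $w=u$ outside; then $g_u=1$, $g=|4x-1|$ on $G$, and $\int_G|g-g_u|\,dx=3/4$, while the minimal upper gradient of $w-u$ on $G$ is $|4x-2|$ with $L^1$ norm $1$. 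So the inequality fails. What \emph{is} true is that $(g+g_u)\chi_G$ is a $1$-weak upper gradient of $w-u$ (since $w-u$ vanishes outside $G$ up to a $\capa_1$-null set and $1$-a.e.\ curve avoids that set), but then you need to bound $\|g_u\|_{L^1(G)}$. The only information you have is $\|Du\|(G)<\eps'$, so you need the comparability of $g_u\,d\mu$ and $\|Du\|$ for Newtonian functions --- which is precisely the content of the cited~\cite[Theorem~4.6]{HKLL}. Thus the explicit gluing ends up requiring the same external input that the paper invokes directly, and is strictly more work; the cleanest route is to use the gluing only to conclude $w\in N^{1,1}(X)$ (which your argument for $\widehat g$ being a $1$-weak upper gradient of the \emph{continuous} function $w$ does accomplish) and then appeal to the $\BV$--$N^{1,1}$ norm comparison for $w-u$.

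For $p>1$ your sketch is consistent with the paper's pointer to~\cite[Theorem~7.1]{KKST} and the Lipschitz density result of~\cite{ACdM}; both you and the paper leave the details at the level of ``adapt the $N^{1,p}$ versions of the mollification and uniform-Lebesgue-point lemmas,'' so there is no substantive difference there.
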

\begin{proof}
When $p=1$, this is a special case of Theorem~\ref{thm:strong quasicontinuity without jump part},
since $\Vert w-u\Vert_{N^{1,1}(X)}\le C\Vert w-u\Vert_{\BV(X)}$, see \cite[Theorem 4.6]{HKLL}.
The case $1<p<\infty$ 
follows by suitably adapting
Theorem~\ref{thm:zero extension} (see~\cite[Theorem~1.1]{KKST}),
Theorem~\ref{thm:mollifying in an open set}, Proposition~\ref{prop:uniform convergence and continuity} (the same proof applies), 
and Proposition~\ref{prop:uniform Lebesgue points without jump set}, combined with the $p$-quasicontinuity of $u\in N^{1,p}(X)$.
\end{proof}

In this section so far, we have only dealt with $\BV$ functions with a $\mathcal H$-negligible jump set. A strong version of 
our quasicontinuity-type result, Theorem~\ref{thm:main result}, would be the following. Note that below we 
require~\eqref{eq:strong quasicontinuity} to hold \emph{everywhere}, not just outside a set of small capacity.

\begin{openproblem}\label{conj:strong quasicontinuity}
Let $u\in\BV(X)$ and let $\eps>0$. Then there exists an open set $G\subset X$ with
$\capa_1(G)<\eps$, and $w\in \BV(X)$ such that $w^l=u^l$ in $X\setminus G$ for all $l=1,\ldots,n$, 
$\Vert w-u\Vert_{\BV(X)}<\eps$, and whenever $y_k\to x\in X$,
\begin{equation}\label{eq:strong quasicontinuity}
\min_{l_2\in\{1,\ldots,n\}} |w^{l_1}(y_k)-w^{l_2}(x)|\to 0
\end{equation}
for each $l_1=1,\ldots,n$.
\end{openproblem}

Though we can pick a set $G$ as in Theorem~\ref{thm:main result}, it is not obvious how the function $w$ should be defined in $G$ to ensure that~\eqref{eq:strong quasicontinuity} holds.
On the other hand, we do get the following Lusin-type approximation for general $\BV$ functions.

\begin{theorem}\label{thm:lusin-type approximation}
Let $u\in\BV(X)$ and $\eps>0$. Then for any open set $W\supset S_u$
there exists an open set $V\supset W$ with $\capa_1(V\setminus W)<\eps$, and
a function $v\in\BV(X)\cap C(X)$ with $v=\widetilde{u}$ in $X\setminus V$ and
\begin{equation}\label{eq:Lusin bounds in jump set}
\Vert v-u\Vert_{L^1(X)}\le \eps, \qquad\ \Vert D(v-u)\Vert(X)\le C\Vert Du\Vert^j(X)+\eps.
\end{equation}
\end{theorem}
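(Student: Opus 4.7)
The plan is to mollify $u$ on a carefully chosen small open set $U\supset S_u$ contained in an enlarged open set $V\supset W$, so that (i) the mollified function is continuous on $X$ and agrees with $\widetilde u$ on $X\setminus V$ via Proposition~\ref{prop:uniform convergence and continuity}, and (ii) the total variation of $u$ on $U$ essentially reduces to the jump part $\|Du\|^j(X)$. Concretely I will take $U:=U_0\cup G_0$ and $V:=W\cup G_0$, where $U_0\subset W$ is a tight neighborhood of $S_u$ with $\|Du\|^{a+c}(U_0)$ small, and $G_0$ is an open set that is simultaneously small in $\capa_1$ (so $\capa_1(V\setminus W)<\eps$) and in $\|Du\|^{a+c}$ (so $\|Du\|^{a+c}(G_0\setminus U_0)$ is small), outside of which $\widetilde u$ enjoys both continuity and locally uniform Lebesgue convergence.

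Fix a small auxiliary parameter $\eta>0$. Since $\|Du\|^{a+c}$ is a finite Radon measure vanishing on $S_u$ (because $\mu(S_u)=0$ and $S_u$ is $\cH$-$\sigma$-finite, see~\eqref{eq:decomposition}), outer regularity yields an open set $U_0$ with $S_u\subset U_0\subset W$ and $\|Du\|^{a+c}(U_0)<\eta$. For $G_0$ I borrow the intersection trick from the proof of Theorem~\ref{thm:strong quasicontinuity without jump part} and set $G_0:=F\cap\widetilde G$, where $F$ is an open set containing $S_u$, with $\|Du\|^{a+c}(F)<\eta$, on whose complement $\widetilde u$ is continuous and $\vint_{B(x,r)}|u-\widetilde u(x)|\,d\mu\to 0$ uniformly (produced by inner regularity of $\|Du\|^{a+c}$ on $X\setminus S_u$ followed by Lusin's and Egorov's theorems), and $\widetilde G$ is an open set with $\capa_1(\widetilde G)<\eps/2$ on whose complement $\widetilde u$ is continuous with locally uniform Lebesgue convergence away from $S_u$. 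The latter is the BV analogue of Proposition~\ref{prop:uniform Lebesgue points without jump set} for functions with arbitrary jump set; it is obtained by first mollifying the jump away in a small open neighborhood of $S_u$ and applying Proposition~\ref{prop:uniform Lebesgue points without jump set} to the resulting jump-free function. As in the proof of Theorem~\ref{thm:strong quasicontinuity without jump part}, the openness of $F$ and $\widetilde G$ permits the splitting argument that shows $\widetilde u$ to be continuous on $X\setminus G_0$ with locally uniform Lebesgue convergence there (away from $S_u$).

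With these choices $V$ is open, $V\supset W$, and $\capa_1(V\setminus W)\le\capa_1(G_0)<\eps/2$. The closed set $H:=X\setminus U$ is contained in $X\setminus(S_u\cup G_0)$, so $\widetilde u|_H$ is continuous with locally uniform Lebesgue convergence. Applying Corollary~\ref{cor:mollifying in an open set} with $\Omega=X$, mollification domain $U$, and $\kappa\le\eps$, I obtain $w\in\BV(X)$ with $w=u$ on $X\setminus U$, $\|w-u\|_{L^1(X)}\le\eps$, and $\|D(w-u)\|(X\setminus U)=0$; Proposition~\ref{prop:uniform convergence and continuity} then forces $\widetilde w\in C(X)$ with $\widetilde w=\widetilde u$ on $H$. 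Setting $v:=\widetilde w$ yields $v\in C(X)\cap\BV(X)$ and, since $X\setminus V\subset H$, also $v=\widetilde u$ on $X\setminus V$. For the variation estimate, the concentration of $\|Du\|^j$ on $S_u\subset U_0$ together with $G_0\setminus U_0\subset X\setminus S_u$ gives
\[
\|Du\|(U)=\|Du\|^j(X)+\|Du\|^{a+c}(U_0)+\|Du\|^{a+c}(G_0\setminus U_0)<\|Du\|^j(X)+2\eta,
\]
and combined with $\|D(v-u)\|(X)=\|D(w-u)\|(U)\le(C+1)\|Du\|(U)$, choosing $\eta$ sufficiently small yields the claimed bound $(C+1)\|Du\|^j(X)+\eps$.

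The main obstacle is the construction of $G_0$: one needs a single small open set that is negligible simultaneously in $\capa_1$ and in $\|Du\|^{a+c}$, outside of which $\widetilde u$ is continuous with locally uniform Lebesgue convergence. This forces combining the $\|Du\|^{a+c}$-based Lusin--Egorov step (to avoid paying the potentially large full variation $\|Du\|(W)$) with the capacity-based quasicontinuity from Theorem~\ref{thm:main result}, and also extending Proposition~\ref{prop:uniform Lebesgue points without jump set} from BV functions with $\mathcal H$-negligible jump set to general ones via the jump-killing mollification; the splitting argument from the proof of Theorem~\ref{thm:strong quasicontinuity without jump part} then transfers continuity and uniform Lebesgue convergence from the complements of $F$ and of $\widetilde G$ to their union.
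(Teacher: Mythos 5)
Your plan diverges from the paper's proof and has a genuine gap in one of the key supporting steps.

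The paper's proof is shorter and goes in two stages. It first shrinks $W$ (using outer regularity of $\Vert Du\Vert$) so that $\Vert Du\Vert(W)\le\Vert Du\Vert(S_u)+\eps$, then applies Corollary~\ref{cor:mollifying in an open set} once, on \emph{all} of $W$, to get $w\in\BV(X)$ with $w=u$ in $X\setminus W$, $\mathcal H(S_w)=0$, and $\Vert D(w-u)\Vert(X)\le C\Vert Du\Vert(W)$. Since $w$ is now jump-free, the already-proved strong quasicontinuity result for jump-free $\BV$ functions (Theorem~\ref{thm:strong quasicontinuity without jump part}) is invoked as a black box to produce $v\in\BV(X)\cap C(X)$ with $v=\widetilde w$ outside a small-capacity open $G$ and $\Vert v-w\Vert_{\BV}\le\eps/2$; the trace identity~\eqref{eq:zero boundary value for mollification} yields $\widetilde w=\widetilde u$ outside $W\cup N$ for a small-capacity open $N$, and then $V:=W\cup N\cup G$ works. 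The variation bound comes from $\Vert Du\Vert(W)\le\Vert Du\Vert(S_u)+\eps=\Vert Du\Vert^j(X)+\eps$.

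Your proposal instead attempts a single mollification on $U=U_0\cup G_0$ and wants to apply Proposition~\ref{prop:uniform convergence and continuity} directly with $H=X\setminus U$. That proposition needs $\vint_{B(x,r)}|u-\widetilde u(x)|\,d\mu\to 0$ locally uniformly on $H$ \emph{for $u$ itself}. To get this, you propose a ``BV analogue of Proposition~\ref{prop:uniform Lebesgue points without jump set} for functions with arbitrary jump set'', obtained ``by first mollifying the jump away in a small open neighborhood [$W'$] of $S_u$ and applying Proposition~\ref{prop:uniform Lebesgue points without jump set} to the resulting jump-free function.'' This is the gap. Proposition~\ref{prop:uniform Lebesgue points without jump set} rests on Lemma~\ref{lem:uniform-fracMaxgradmeas}, which crucially uses $\Vert Du\Vert^j(X)=0$ so that $\Vert Du\Vert$ vanishes on $\mathcal H$-$\sigma$-finite sets; this fails completely for general $u$. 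The transfer you propose also does not go through: writing $w'$ for the jump-killed mollification on $W'$, one has
\[
\vint{B(x,r)}|u-\widetilde u(x)|\,d\mu\le\vint{B(x,r)}|u-w'|\,d\mu+\vint{B(x,r)}|w'-\widetilde{w'}(x)|\,d\mu,
\]
and while the second term is handled by Proposition~\ref{prop:uniform Lebesgue points without jump set}, the first term is bounded by $Cr\,\Vert Du\Vert(B(x,2r)\cap W')/\mu(B(x,r))$ via~\eqref{eq:discrete convolution boundary value calculation}, which includes the jump part $\Vert Du\Vert^j$. For $x\in\partial W'\cap\overline{S_u}$ this quantity need not decay at all --- the codimension-one density of $\Vert Du\Vert^j$ along $S_u$ gives a lower bound of order one --- and $\overline{S_u}\setminus S_u$ need not have small $1$-capacity, so these bad boundary points cannot simply be absorbed into $G_0$. (Choosing $W'$ so that $\overline{W'}\subset U_0$ would sidestep $\partial W'$, but such a $W'\supset S_u$ with compactly contained closure need not exist when $S_u$ is not compactly contained in $U_0$.) Also, the Egorov step to get $F$ only makes $\Vert Du\Vert^{a+c}(F)$ small, so it does not give the capacity smallness you would need to discard the remaining bad set.

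In short, the paper sidesteps exactly this issue by converting to the jump-free case \emph{before} invoking any uniform Lebesgue-point machinery. If you want to keep your overall structure, the cleanest fix is to adopt the paper's two-stage strategy: first mollify on (a shrunk) $W$ to kill the jump, then apply Theorem~\ref{thm:strong quasicontinuity without jump part} to the resulting jump-free function rather than trying to prove a new uniform Lebesgue-point statement for the original $u$.
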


For example, we can require $W$ and hence $V$ to have $\mu$-measure less than $\eps$. This theorem 
also gives better control of $\Vert D(v-u)\Vert(X)$ than a 
Lusin-type approximation by a Lipschitz function given in \cite[Proposition 4.3]{KKST2}, but on the downside, 
we only get an approximation by a \emph{continuous} function.

\begin{proof}
By making $W$ smaller, if necessary, we can assume that $\Vert Du\Vert(W)\le \Vert Du\Vert(S_u)+\eps$.
Apply Corollary~\ref{cor:mollifying in an open set} with $U=W$ and 
$\kappa=\eps/2$ to obtain a function $w\in\BV(X)$ with $w=u$ in $X\setminus W$, $\Vert w-u\Vert_{L^1(X)}\le \eps/2$, and
\[
\Vert D(w-u)\Vert(X)=\Vert D(w-u)\Vert(W)\le C\Vert Du\Vert(W)\le
C\Vert Du\Vert(S_u)+C\eps.
\]
Note that by~\eqref{eq:zero boundary value for mollification}, we have in fact
$w^{\wedge}=w^{\vee}=\widetilde{u}$ in 
$X\setminus (W\cup \widetilde{N})$, for some $\mathcal H$-negligible set $\widetilde{N}\subset X$. By Remark \ref{rmk:capacities etc}, there exists an open set $N\supset \widetilde{N}$ with $\capa_1(N)<\eps/2$.
Furthermore, $\mathcal H(S_w)=0$, so that we can apply 
Theorem~\ref{thm:strong quasicontinuity without jump part}
to get an open set $G\subset X$ with 
$\capa_1 (G)<\eps/2$ and a function $v\in \BV(X)\cap C(X)$ with $v=\widetilde{w}$ in $X\setminus G$ and 
$\Vert v-w\Vert_{\BV(X)}\le \eps/2$. Thus for $V:= W\cup N\cup G$ we have $v=\widetilde{u}$ in $X\setminus V$, $\Vert v-u\Vert_{L^1(X)}\le \eps$, and
\[
\Vert D(v-u)\Vert(X)\le C\Vert Du\Vert(S_u)+C\eps=C\Vert Du\Vert^j(X)+C\eps.
\]
\end{proof}

If $X$ supports a strong relative isoperimetric inequality, we can use 
the proposition below instead of Corollary~\ref{cor:mollifying in an open set} in 
the proof of Theorem~\ref{thm:lusin-type approximation}, and then we will
get~\eqref{eq:Lusin bounds in jump set} with the constant $C=2+\eps$.

\begin{proposition}[{\cite[Corollary 6.7]{LS}}]
Let $U\subset X$ be an open set, and let $u\in\BV(U)$. Assume either that the space supports a strong 
relative isoperimetric inequality, or that $\mathcal H(\partial U)<\infty$. Then there exist functions
$\breve{v}_i\in \liploc(U)$, $i\in\N$, with $\breve{v}_i\to u$ in $L^1(U)$, 
$\Vert D\breve{v}_i\Vert(U)\to \Vert Du\Vert(U)$, and such that the functions
\[
h_i:=
\begin{cases}
\breve{v}_i-u &\ \text{in }U,\\
0  &\ \text{in }X\setminus U,
\end{cases}
\]
satisfy $h_i\in \BV(X)$ with $\Vert D h_i\Vert(X\setminus U)=0$.
\end{proposition}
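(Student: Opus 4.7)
The idea is to combine two approximation schemes and glue them on a thin shell near $\partial U$: an interior sequence $\phi_i \in \liploc(U)$ supplied by the very definition of $\|Du\|(U)$, which is sharp in total variation but only known to converge in $L^1_{\loc}(U)$, together with the discrete convolutions $v_i$ of Corollary~\ref{cor:gluing discrete convolutions}, which converge in $L^1(U)$ and deliver the required zero-extension property $\|Dh_i\|(X\setminus U)=0$ but whose total variation is only controlled by $C\|Du\|(U)$. The gluing is chosen so that the boundary behaviour of $h_i$ is inherited from $v_i - u$, while the sharp interior variation is inherited from $\phi_i$.

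Concretely: choose $\phi_i \in \liploc(U)$ with upper gradients $g_{\phi_i}$ such that $\phi_i \to u$ in $L^1_{\loc}(U)$ and $\int_U g_{\phi_i}\,d\mu \to \|Du\|(U)$. Let $v_i$ be the discrete convolutions of $u$ at scale $1/i$ as in Corollary~\ref{cor:gluing discrete convolutions}, so that $v_i \to u$ in $L^1(U)$ and the zero-extension $h_i^0 := v_i - u$ satisfies $\|Dh_i^0\|(X\setminus U) = 0$ when $\cH(\partial U) < \infty$. For $j \in \N$ set $U_j := \{x\in U:\,\dist(x,X\setminus U)>1/j\}$ and fix a $Cj$-Lipschitz cutoff $\eta_j$ with $\eta_j = 1$ on $U_{2j}$ and $\supp\eta_j \subset U_j$. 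For a sequence $j_i \to \infty$ to be chosen by diagonalization, define
\[
\breve{v}_i := \eta_{j_i}\phi_i + (1-\eta_{j_i})v_i \in \liploc(U).
\]

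There are three properties to verify. First, for $\cH$-a.e.\ $x\in\partial U$ and all $r$ with $r<1/j_i$ one has $B(x,r)\cap U \subset U\setminus U_{j_i}$, on which $\breve{v}_i = v_i$; hence $|h_i| = |v_i - u|$ there and the weak trace condition~\eqref{eq:weak zero trace condition} for $h_i$ follows from Proposition~\ref{prop:traces for discrete convolutions}, so that Theorem~\ref{thm:zero extension} yields $h_i \in \BV(X)$ with $\|Dh_i\|(X\setminus U) = 0$. Second, $L^1(U)$-convergence $\breve{v}_i \to u$ follows by the triangle inequality from $\phi_i \to u$ in $L^1(U_{2j_i})$ (valid for each fixed $j_i$) and $v_i \to u$ in $L^1(U)$. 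Third, the Leibniz product rule gives the upper gradient
\[
g_{\breve v_i} \le \eta_{j_i} g_{\phi_i} + (1-\eta_{j_i}) g_{v_i} + C j_i |\phi_i - v_i|\chi_{U_{j_i}\setminus U_{2j_i}}
\]
for $\breve{v}_i$. Integrating over $U$ produces three contributions: the first is $\le \int_U g_{\phi_i}\,d\mu \to \|Du\|(U)$; the second is controlled by the standard form~\eqref{eq:upper gradient of discrete convolution} of $g_{v_i}$ together with the bounded overlap of the Whitney cover, yielding $\le C\|Du\|(U\setminus U_{cj_i}) \to 0$ since $\|Du\|$ is a finite Radon measure supported in $U$; the third is at most $C j_i\|\phi_i - v_i\|_{L^1(U_{j_i}\setminus U_{2j_i})}$. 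Combined with lower semicontinuity of the total variation, one obtains $\|D\breve{v}_i\|(U) \to \|Du\|(U)$.

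The main obstacle is the coordination of the two rates in the last step: $j_i$ must grow fast enough to push the discrete-convolution mass to $\partial U$ (making the second term vanish) yet slowly enough that the $Cj_i$ Lipschitz cost of the cutoff is absorbed by the $L^1$-smallness of $\phi_i - v_i$ in the transition annulus. Since for each fixed $j$ one has $\|\phi_i - v_i\|_{L^1(U_j\setminus U_{2j})} \to 0$ as $i\to\infty$, a standard diagonal extraction selects such a sequence $j_i$. Under the strong relative isoperimetric inequality alternative, the entire argument goes through once Corollary~\ref{cor:gluing discrete convolutions} and Theorem~\ref{thm:zero extension} are replaced by their counterparts established in~\cite{LS}, where perimeter bounds rather than the $\cH$-finiteness of $\partial U$ furnish the zero-extension property.
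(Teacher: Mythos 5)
Your plan --- interpolate between a sequence $\phi_i\in\liploc(U)$ that is sharp for the definition of $\Vert Du\Vert(U)$ and the discrete convolutions $v_i$ of Corollary~\ref{cor:gluing discrete convolutions} (which carry the boundary-trace control), with cutoffs $\eta_{j_i}$ whose transition annulus is pushed toward $\partial U$ by a diagonal choice of $j_i$ --- is a correct and natural one, and the upper-gradient/Leibniz estimate for $\breve{v}_i=\eta_{j_i}\phi_i+(1-\eta_{j_i})v_i$ and the bound $\int_{U\setminus U_{2j_i}}g_{v_i}\,d\mu\le C\Vert Du\Vert(U\setminus U_{cj_i})$ via the form \eqref{eq:upper gradient of discrete convolution} and bounded overlap are both right. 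There is, however, one genuine gap as written: you only have $\phi_i\to u$ in $L^1_{\loc}(U)$, i.e.\ on $\Omega'\Subset U$, whereas your argument repeatedly needs $L^1$ convergence on the sets $U_j:=\{\dist(\cdot,X\setminus U)>1/j\}$ (to control $\Vert\eta_{j_i}(\phi_i-u)\Vert_{L^1(U)}$, to run the third-term diagonalization on $U_{j_i}\setminus U_{2j_i}$, and indeed to conclude $\breve{v}_i\in L^1(U)$ at all). If $U$ is unbounded, $U_j$ is \emph{not} compactly contained in $U$, so $L^1_{\loc}$ convergence gives nothing there. The fix is exactly the device used in the paper's Theorem~\ref{thm:mollifying in an open set}: replace $U_j$ by the bounded sets $\{x\in U:\,\dist(x,X\setminus U)>1/j\ \text{and}\ d(x,x_0)<j\}$ for a fixed base point $x_0$. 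These are $\Subset U$ by properness of $X$, and the rest of your argument goes through unchanged (the second-term estimate only uses that $\Vert Du\Vert$ is a finite Radon measure and the modified $U_j$ still exhaust $U$). Separately, the strong relative isoperimetric inequality alternative is dispatched in one sentence; to be complete you would need to note which specific results from \cite{LS} replace Theorem~\ref{thm:zero extension} and the trace statement and verify the argument still glues together, but this is more an omission of detail than an error in approach.
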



\begin{thebibliography}{ACMM}

\bibitem{A1}L.~Ambrosio,
{\it Fine properties of sets of finite perimeter in doubling metric measure spaces},
Calculus of variations, nonsmooth analysis and related topics.
Set-Valued Anal. \textbf{10} (2002), no. 2-3, 111--128.

\bibitem{ACdM}L. Ambrosio, M. Colombo, and S. DiMarino,
\emph{Sobolev spaces in metric measure spaces: reflexivity and lower semicontinuity of slope},
preprint, {\tt http://cvgmt.sns.it/paper/2055/}.

\bibitem{AFP}L. Ambrosio, N. Fusco, and D. Pallara,
\emph{Functions of bounded variation and free discontinuity problems.},
Oxford Mathematical Monographs. The Clarendon Press, Oxford University Press, New York, 2000.

\bibitem{AMP}L.~Ambrosio, M.~Miranda, Jr., and D.~Pallara,
{\it Special functions of bounded variation in doubling metric measure spaces},
Calculus of variations: topics from the mathematical heritage of E. De Giorgi, 1--45,
Quad. Mat., \textbf{14}, Dept. Math., Seconda Univ. Napoli, Caserta, 2004. 

\bibitem{AT}L. Ambrosio and P. Tilli,
\emph{Topics on analysis in metric spaces},
Oxford Lecture Series in Mathematics and its Applications, 25. Oxford University Press, Oxford, 2004. viii+133 pp. 

\bibitem{BB}A. Bj\"orn and J. Bj\"orn,
\emph{Nonlinear potential theory on metric spaces},
EMS Tracts in Mathematics, 17. European Mathematical Society (EMS), Z\"urich, 2011. xii+403 pp.

\bibitem{BBS1}A. Bj\"orn, J. Bj\"orn, and N. Shanmugalingam,
\emph{Quasicontinuity of Newton-Sobolev functions and density of Lipschitz functions on metric spaces},
Houston J. Math. \textbf{34} (2008), no. 4, 1197--1211. 

\bibitem{BBS07}A. Bj\"orn, J. Bj\"orn, and N. Shanmugalingam,
\emph{Sobolev extensions of H\"older continuous and characteristic functions on metric spaces},
Canad. J. Math. \textbf{59} (2007), no. 6, 1135--1153.

\bibitem{BKK}J. Bourgain, M. Korobkov, and J. Kristensen,
\emph{On the Morse-Sard property and level sets of $W^{n,1}$ Sobolev functions on $\R^n$},
J. Reine Angew. Math. \textbf{700} (2015), 93--112. 

\bibitem{Che} J. Cheeger,
\textit{Differentiability of Lipschitz functions on metric measure spaces},
Geom. Funct. Anal. \textbf{9}
(1999), no. 3, 428--517.

\bibitem{CoWe}R. R. Coifman and G. Weiss,
\emph{Analyse harmonique non-commutative sur certaines espaces homog\`enes}.
\'Etude de certaines int\'egrales singuli\`eres. Lecture Notes in Mathematics, Vol. 242. Springer-Verlag, Berlin-New York, 1971. v+160 pp. 

\bibitem{EvaG92}L. C. Evans and R. F. Gariepy, \emph{Measure theory and fine properties of functions}, Studies in 
Advanced Mathematics series, CRC Press, Boca Raton, 1992.

\bibitem{Fed}H. Federer,
\emph{Geometric measure theory},
Die Grundlehren der mathematischen Wissenschaften, Band 153 Springer-Verlag New York Inc., New York 1969 xiv+676 pp. 

\bibitem{Giu84}E. Giusti, \emph{Minimal surfaces and functions of bounded variation}, Birkh\"auser, 1984.

\bibitem{Hj}P. Haj\l{}asz,
\emph{Sobolev spaces on metric-measure spaces},
Heat kernels and analysis on manifolds, graphs, and metric spaces (Paris, 2002), 173--218,
Contemp. Math., \textbf{338}, Amer. Math. Soc., Providence, RI, 2003.

\bibitem{HaKi}H. Hakkarainen and J. Kinnunen,
\emph{The BV-capacity in metric spaces},
Manuscripta Math. \textbf{132} (2010), no. 1-2, 51--73. 

\bibitem{HKLL}H. Hakkarainen, J. Kinnunen, P. Lahti, and P. Lehtel\"a,
\emph{Relaxation and integral representation for functionals of linear growth on metric measure spaces},
preprint 2014.

\bibitem{HS}H. Hakkarainen and N. Shanmugalingam,
\emph{Comparisons of relative BV-capacities and Sobolev capacity in metric spaces},
Nonlinear Anal. \textbf{74} (2011), no. 16, 5525--5543. 

\bibitem{HKT}T. Heikkinen, P. Koskela, and H. Tuominen,
\emph{Sobolev-type spaces from generalized Poincar\'e inequalities},
Studia Math. \textbf{181} (2007), no. 1, 1--16. 

\bibitem{HKST}J. Heinonen, P. Koskela, N. Shanmugalingam, and J. Tyson,
\emph{Sobolev spaces on metric measure spaces: an
approach based on upper gradients}, New Mathematical Monographs {\bf 27},
Cambridge University Press (2015), i--xi+448.

\bibitem{HK}J. Heinonen and P. Koskela,
\emph{Quasiconformal maps in metric spaces with controlled geometry},
Acta Math. \textbf{181} (1998), no. 1, 1--61.

\bibitem{KKST}J. Kinnunen, R. Korte, N. Shanmugalingam, and H. Tuominen,
\emph{A characterization of Newtonian functions with zero boundary values},
Calc. Var. Partial Differential Equations \textbf{43} (2012), no. 3-4, 507--528.

\bibitem{KKST3}J. Kinnunen, R. Korte, N. Shanmugalingam, and H. Tuominen,
\emph{Lebesgue points and capacities via the boxing inequality in metric spaces},
Indiana Univ. Math. J. {\bf 57} (2008), no. 1, 401--430. 

\bibitem{KKST2}J. Kinnunen, R. Korte, N. Shanmugalingam, and H. Tuominen,
\emph{Pointwise properties of functions of bounded variation on metric spaces},
Rev. Mat. Complut. {\bf 27} (2014), no. 1, 41--67. 

\bibitem{KinKST10}J. Kinnunen, R. Korte, N. Shanmugalingam and H. Tuominen,
\emph{The DeGiorgi measure and an 
obstacle problem related to minimal surfaces in metric spaces},
J. Math. Pures Appl. \textbf{93} (2010), 599--622.

\bibitem{KoLa}R. Korte and P. Lahti,
\emph{Relative isoperimetric inequalities and sufficient conditions for finite perimeter on metric spaces},
Ann. Inst. H. Poincar\'e Anal. Non Lin\'eaire \textbf{31} (2014), no. 1, 129--154. 

\bibitem{KLS}R. Korte, P. Lahti, and N. Shanmugalingam,
\emph{Semmes family of curves and a characterization of functions of bounded variation in terms of curves},
Calc. Var. Partial Differential Equations \textbf{54} (2015), no. 2, 1393--1424. 

\bibitem{L}P. Lahti,
\emph{Extensions and traces of functions of bounded variation on metric spaces},
J. Math. Anal. Appl. \textbf{423} (2015), no. 1, 521--537.

\bibitem{LS}P. Lahti and N. Shanmugalingam,
\emph{Trace theorems for functions of bounded variation in metric spaces},
submitted.

\bibitem{MaSe}R. A. Mací\'ias and C. Segovia,
\emph{A decomposition into atoms of distributions on spaces of homogeneous type},
Adv. in Math. \textbf{33} (1979), no. 3, 271--309.

\bibitem{M}M.~Miranda, Jr.,
{\it Functions of bounded variation on ``good'' metric spaces},
J. Math. Pures Appl. (9)  \textbf{82}  (2003),  no. 8, 975--1004.  

\bibitem{S}N.~Shanmugalingam,
{\it Newtonian spaces: An extension of {S}obolev spaces to metric measure spaces},
Rev. Mat. Iberoamericana \textbf{16}(2) (2000), 243--279.

\bibitem{Wh}H. Whitney, \emph{Analytic extensions of differentiable functions defined in closed sets},
Trans. Amer. Math. Soc. {\bf 36} (1934), 63--89.

\bibitem{Zie89}W.P. Ziemer, \emph{Weakly differentiable functions. Sobolev spaces and functions of bounded variation}, Graduate Texts in Mathematics, 120. Springer-Verlag, New York, 1989. 

\end{thebibliography}
\end{document}